\documentclass[1996/10/24]{amsart}

\allowdisplaybreaks[3]

\usepackage{amsfonts}
\usepackage{epsf,latexsym,amsfonts,amsbsy,mathrsfs}
\usepackage{amsmath, amssymb, amsthm,amscd,amsxtra}
\usepackage[]{graphicx,subfigure}
\usepackage{epstopdf}
\usepackage{float}
\usepackage{multirow}
\usepackage{bbm}
\usepackage{stmaryrd}
\usepackage{epsfig}
\usepackage{tikz}
\usepackage{dsfont}
\usetikzlibrary{positioning}
\usepackage{xcolor}

\usepackage{amssymb,stmaryrd,color}
\usepackage{lipsum}
\usepackage{amsfonts}
\usepackage{graphicx}
\usepackage{epstopdf}
\usepackage{algorithmic}
\usepackage{bm}
\usepackage{multirow}

\ifpdf
  \DeclareGraphicsExtensions{.eps,.pdf,.png,.jpg}
\else
  \DeclareGraphicsExtensions{.eps}
\fi

\newtheorem{theorem}{Theorem}[section]
\newtheorem{lemma}[theorem]{Lemma}

\theoremstyle{definition}

\newtheorem{remark}[theorem]{Remark}

\newtheorem{corollary}[theorem]{Corollary}

\numberwithin{equation}{section}

\newcommand{\ignore}[1]{}

\newcommand{\vertiii}[1]{{\left\vert\kern-0.25ex\left\vert\kern-0.25ex\left\vert #1
    \right\vert\kern-0.25ex\right\vert\kern-0.25ex\right\vert}}

\begin{document}

\title[An Interface Green's Function Framework]{An Interface Green's Function Framework for Complete Discrete
$W^{1,\infty}$ Analysis of Discontinuous Galerkin Methods}

\author{Haitao Leng}
\address{School of Mathematics and Information Science, Guangzhou University, Guangzhou 510006, Guangdong, China}
\email{htleng@m.scnu.edu.cn}

\author{Weifeng Qiu}
\address{Department of Mathematics, City University of Hong Kong, Hong Kong, China}
\email{weifeqiu@cityu.edu.hk}

\thanks{Weifeng Qiu is corresponding author.}

\subjclass[2020]{65N15, 65N30}


\begin{abstract}
Pointwise error analysis of discontinuous Galerkin (DG) methods for the Poisson equation has received considerable attention over the past two decades. However, on convex polyhedral domains, existing analyses can only establish
\emph{nearly} (with a logarithmic factor) optimal error estimates in the $L^{\infty}$ norm and optimal error estimates in the broken $W^{1,\infty}$ seminorm for several DG methods. Since numerical solutions obtained by DG methods may have jumps
across mesh interfaces, and the broken $W^{1,\infty}$ seminorm does not control these jumps, a complete discrete
$W^{1,\infty}$ theory for DG methods for the Poisson equation on convex polyhedral domains has remained unavailable.
Here, by a complete discrete $W^{1,\infty}$ theory, we mean that the maximum-norm of the broken gradient and
the maximum norm of the interface jumps are estimated simultaneously.

In this paper, we develop an interface Green's function framework for the complete discrete $W^{1,\infty}$ analysis of DG methods on convex polyhedral domains. The proposed framework introduces new interface Green's functions that represent the jumps of numerical solutions of DG methods across mesh interfaces. Its central analytical ingredient is a new local energy estimate for these Green's functions, obtained by exploiting a cancellation between neighboring discrete delta functions. This local energy estimate differs fundamentally from existing Green's function estimates and enables us to derive maximum-norm estimates for the jumps of numerical solutions across mesh interfaces without
introducing additional logarithmic factors.

Via the proposed framework, we establish optimal error estimates in the complete discrete $W^{1,\infty}$ norm for the symmetric interior penalty DG method, the hybrid mixed DG method, and two representative hybridizable DG methods
for the Poisson equation in convex polyhedral domains.
In particular, we derive optimal maximum-norm estimates for interface jumps, numerical fluxes, and broken gradients.
We further extend the framework to the $C^0$ interior penalty method for the biharmonic equation on convex polygonal
domains and obtain maximum-norm estimates for the jumps of the normal derivative. The interface Green's function
framework is sufficiently general to be applicable to a broad class of nonconforming methods and is valid on general
polyhedral meshes.
\end{abstract}

\keywords{interface Green's function, Complete Discrete $W^{1,\infty}$ Analysis, Convex Polyhedra, DG methods,
Possin equation, Biharmonic equation}

\maketitle

\section{Introduction}

Pointwise approximation has long been one of the central topics in the numerical analysis of partial differential equations (PDEs). Compared with energy-norm estimates, pointwise estimates provide considerably more detailed information on the local behavior of numerical solutions and play a crucial role in adaptive computation \cite{dg2012, nssv2006}, optimal control \cite{dh2007, lc2026}, singular source problems \cite{ddo2020}, and the numerical approximation of nonlinear PDEs \cite{d2006, dd1975}. Since the pioneering works of Schatz and Wahlbin \cite{sw1978,sw1995}, the theory of maximum-norm estimates has developed into a mature branch of finite element analysis.

For conforming finite element methods (FEM) for the Poisson equation, the maximum-norm theory is now 
well understood. Classical works first established $L^\infty$- and $W^{1,\infty}$-error estimates for smooth \cite{sw1995} and polygonal domains \cite{sw1978}. Subsequent research expanded this theory to graded meshes \cite{dlsw2012} and polyhedral domains \cite{drs2024, glrs2009}. Alongside these developments, weak discrete maximum principles were also established \cite{s1980, ll2020}. These developments have provided a comprehensive understanding of pointwise approximations for FEM.

Over the last two decades, DG methods have become one of the most successful classes of finite element methods owing to their flexibility on general meshes, local conservation, suitability for $hp$-adaptivity, and excellent parallel scalability; see, for example, \cite{c2002,de2011}. Numerous DG methods have subsequently been developed for second-order elliptic problems, including interior penalty discontinuous Galerkin (IPDG) methods \cite{abcm2002, mss2017}, local discontinuous Galerkin (LDG) methods \cite{c2006, ccps2000, ckps2001}, hybrid mixed DG methods \cite{es2010}, hybridizable discontinuous Galerkin (HDG) methods \cite{cgl2009, cgnps2011, lc2022, qs2016}, and hybrid high-order methods \cite{be2018, cel2019}. 

The pointwise error analysis for DG methods for the Poisson equation has also attracted considerable attention.
Chen and Chen \cite{cc2005} established the first maximum-norm estimates for IPDG methods. Shortly afterwards, Chen \cite{c2005} and Guzm\'an \cite{g2006} obtained corresponding results for LDG methods. However, the estimates in \cite{cc2005,c2005,g2006} are valid only on smooth domains.
More recently, Leng and Chen \cite{lc2026} developed a unified analysis of \emph{nearly} optimal error estimates
in the $L^{\infty}$ norm and optimal error estimates in the broken $W^{1,\infty}$ seminorm for a class of IP-HDG methods on convex polyhedral domains. We denote by $u_h^{{\rm nm}}$ the numerical solution of any DG methods
which can be IPDG, LDG, IP-HDG, etc. Roughly speaking, we have that for any function $\chi$ in the discrete space,
\begin{align}
\|u-u_h^{{\rm nm}}\|_{L^{\infty}(\Omega)}\leq& C|\ln h|^{\beta}\left(\|u-\chi\|_{L^{\infty}(\Omega)}+h\|\nabla(u-\chi)\|_{L_h^{\infty}(\Omega)}\right),\label{nm-mn:1}\\
\|\nabla(u-u_h^{{\rm nm}})\|_{L_h^{\infty}(\Omega)}=&\max_{K\in\mathcal T_h} \|\nabla (u - u_{h}^{\rm nm}) \|_{L^\infty(K)}\leq C\|\nabla(u-\chi)\|_{L_h^{\infty}(\Omega)},\label{nm-mn:2}
\end{align}
where $\mathcal{T}_h$ is a mesh of the domain $\Omega$, $\beta$ is a positive constant if $\Omega$ is a convex polyhedron, and $\beta = 0$ if $\Omega$
is a smooth domain with polynomial degrees greater than one. Despite these developments, a critical gap remains in the existing theory.

In contrast to FEM, for DG methods,
$\|\nabla(u-u_h^{{\rm nm}})\|_{L_h^\infty(\Omega)}$
is merely the broken $W^{1,\infty}$ seminorm. It contains no information on discontinuities of $u_{h}^{{\rm nm}}$
across mesh interfaces. Consequently, existing pointwise analyses provide no direct control over interface jumps
of $u_{h}^{{\rm nm}}$ in the $L^{\infty}$ norm. Since these quantities are intrinsic to DG methods for the Poisson
equation, it is natural to seek a complete discrete $W^{1,\infty}$ theory in which the broken gradient and interface
jumps are estimated simultaneously.

The primary objective of this paper is to establish such a theory.

To this end, we introduce the discrete $W^{1,\infty}$ norm (see (\ref{discrete_w_1_infinity_norm}) for the detailed description):
\[
|||v|||_{W_h^{1,\infty}(\Omega)}
=\|\nabla v\|_{L_h^\infty(\Omega)}
+
\max_{F\in\mathcal E_h}
h_F^{-1}\|[v]\|_{L^\infty(F)},
\]
which extends the classical broken $W^{1,\infty}$ seminorm $\|\nabla v\|_{L_h^\infty(\Omega)}$ by incorporating interface jumps of $v$ across mesh interfaces and the trace of $v$ on $\partial\Omega$. Here $\mathcal{E}_h$ denotes the set of all faces of $\mathcal{T}_h$ and $h_F$ denotes the diameter of the face $F$. This norm is the DG analogue of the standard $W^{1,\infty}$-norm. We establish optimal error estimates in this norm for the symmetric IPDG, hybrid mixed DG, and two representative HDG methods for the Poisson equation on convex polyhedral domains, assuming a polynomial degree of $k \ge 1$.

If the domain is not smooth, it is not straightforward to obtain these estimates. Existing Green's function analyses for pointwise estimates
are developed to estimate the numerical solution itself or its broken gradient \cite{c2005, cc2005, cmy2021, g2006, lc2026}, whereas interface jumps require fundamentally different representation formulas. We would like to emphasize that, if the domain is a convex polyhedron, a direct combination of existing $L^\infty$ estimate (\ref{nm-mn:1})  with trace inequalities inevitably introduces logarithmic factors inherited from (\ref{nm-mn:1}).
More precisely, we will have
\begin{align}
\max_{F\in\mathcal E_h}h_F^{-1}\|[u - u_{h}^{{\rm nm}}]\|_{L^\infty(F)} \leq C \vert \ln h \vert^{\beta}
\left( h^{-1} \|u-\chi\|_{L^{\infty}(\Omega)}+\|\nabla(u-\chi)\|_{L_h^{\infty}(\Omega)}\right).
\label{nm-mn:3}
\end{align}
Here the constant $\beta$ in the above inequality is identical to that in (\ref{nm-mn:1}).
To remove the logarithmic factor in (\ref{nm-mn:3}), one might attempt to exploit the weak discrete maximum principle that has been established for FEM in polygons \cite{s1980} and convex polyhedra \cite{ll2020} with polynomial degrees greater than one. 
The weak discrete maximum principle of DG methods means 
\begin{align}
\label{wmp_dg}
\Vert u_{h}^{{\rm nm}} \Vert_{L^{\infty}(\Omega)} \leq C \Vert u \Vert_{L^{\infty}(\partial\Omega)},
\end{align}
if $u_{h}^{{\rm nm}}$ is discrete harmonic. However, with respect to DG methods, weak
discrete maximum principle (\ref{wmp_dg}) has not been proven. We notice that  
the weak discrete maximum principle of IPDG in \cite{ChibaSaito2019} is different from 
(\ref{wmp_dg}), because it replaces $\Vert u\Vert_{L^{\infty}(\partial\Omega)}$ by 
$\Vert u_{h}^{{\rm nm}}\Vert_{L^{\infty}(\partial\Omega)}$ that is not known beforehand.
Furthermore, a DG analogue of the analysis in \cite{ll2020} requires optimal error estimate of 
$\Sigma_{F\in \mathcal{E}_{h}}\Vert [u - u_{h}^{{\rm nm}}]\Vert_{L^{p}(F)}^{p}$ for some $p$ 
in a neighborhood of $2$, which appears to require an estimate of $\max_{F \in \mathcal{E}_{h}}h_{F}^{-1} 
\Vert [u-u_h^{{\rm nm}}] \Vert_{L^{\infty}(F)}$.
Thus, on convex polyhedral domains, it is not clear how to remove the logarithmic factor 
in (\ref{nm-mn:3}) for DG methods.

We would like to emphasize that estimate (\ref{nm-mn:3}) without the logarithmic factor
is the first step toward obtaining discrete
$W^{2,p}$ stability of DG-type methods for second order linear elliptic PDEs in non-divergence form
with uniformly continuous matrix-valued coefficients. If (\ref{nm-mn:3}) contains the logarithmic factor, the
freezing coefficient technique in \cite{fns2018} will not succeed such that the discrete  $W^{2,p}$ stability
of DG-type methods will be valid only for the case $p = 2$. In \cite{fns2018}, estimate (\ref{nm-mn:3})
does not contain the logarithmic factor because the domain is assumed to be smooth.
Therefore, in order to generalize the results in \cite{fns2018} on convex polyhedral domains, estimate
(\ref{nm-mn:3}) without the logarithmic factor is essentially needed.

To obtain optimal error estimates in the complete discrete $W^{1,\infty}$ norm for DG methods on convex polyhedral 
domains, we develop an interface Green's function framework. Existing Green's function analyses estimate the value 
or gradient of numerical solutions through Green's functions generated by a single discrete delta function. The 
present work differs from this approach by introducing Green's functions generated by the difference between 
discrete delta functions on adjacent elements. These Green's functions naturally represent the interface jumps 
of the DG solutions. So they are called interface Green's functions. Moreover, this design induces a cancellation 
in the singular source term, which leads to a sharper local energy estimate for the interface Green's functions 
and yields an additional factor of $h$ (see (\ref{ee-rG23:1}) in Lemma \ref{ee-rG23}). This new local energy 
estimate allows us to control interface jumps without introducing the logarithmic factors that arise from 
a direct application of existing maximum-norm error estimates.

Although we focus on several representative DG methods, the interface Green's function framework is considerably more general. The construction of interface Green's functions depends only on the local discontinuous structure of the approximation rather than on the particular formulation of the numerical method, or on simplicial meshes. We therefore expect that the proposed methodology can be extended to many other nonconforming methods on general polyhedral meshes.

To illustrate this generality, we further consider the $C^0$ interior penalty method for the biharmonic equation in convex polygonal domains. Existing analyses establish pointwise estimates for the broken Hessian \cite{l2021,hltwz2026}. But the jumps of the normal derivative have remained untreated. By adapting the interface Green's function framework developed for the Poisson equation, we derive maximum-norm estimates for these jumps, thereby demonstrating that the proposed analytical technique extends naturally beyond DG methods for the Poisson equation.

The main contributions of this paper may be summarized as follows:

\begin{itemize}
\item We develop a new family of interface Green's functions together with new representation formulas for interface quantities.

\item We establish a new local energy estimate for interface Green's functions that gains an additional factor of $h$, based on a cancellation technique combined with a dyadic decomposition. This estimate differs fundamentally from the classical Green's function estimates used in previous pointwise analyses and constitutes the principal analytical innovation of this work.

\item We obtain optimal error estimates in complete discrete $W^{1,\infty}$ norm for the IPDG, hybrid mixed DG, and two representative HDG methods for the Poisson problem on convex polyhedral domains.

\item We further apply the proposed framework to the $C^0$ interior penalty method for the biharmonic equation on convex polygonal domains, obtaining better maximum-norm estimates for the jumps of the normal derivative than
what existing works \cite{l2021,hltwz2026} can provide.
\end{itemize}

The remainder of the paper is organized as follows. Section \ref{sec2} introduces notation. Section \ref{sec3} establishes complete discrete $W^{1,\infty}$ estimates for several DG methods applied to the Poisson equation on convex polyhedral domains. Section \ref{sec4} develops the interface Green's function framework and provides the proofs for the main results given in Section \ref{sec3}. Section \ref{sec5} extends the analytical framework to the $C^0$ interior penalty approximation of the biharmonic equation on convex polygonal domains. We conclude this paper with some conclusions in Section \ref{sec6}.

\section{Notations}\label{sec2}

Throughout this paper, we assume $\Omega$ is a bounded and convex polyhedral domain in $\mathbb{R}^{d}$ ($d=2,3$).
Let $\mathcal{T}_h$ be a conforming and quasi-uniform mesh of the domain $\Omega$.
The mesh $\mathcal{T}_{h}$ may consist of either simplices or general polyhedral elements.
For a general polyhedral mesh, we assume that for each polyhedral
element $K$, there is a shape-regular sub-triangulation and the number of sub-elements is uniformly
bounded by a positive constant $\mathcal{N}_d$ independent of the mesh size $h$.
The meshes associated with the different numerical methods will be specified when the methods are introduced.
Denote by $\mathcal{E}_h^o$ the set of all interior faces of $\mathcal{T}_h$
and $\mathcal{E}_h^{\partial}$ the set of all boundary faces.
We define $\mathcal{E}_h=\mathcal{E}_h^o\cup\mathcal{E}_h^{\partial}$ and
$\partial\mathcal{T}_h=\{\partial K: K\in\mathcal{T}_h\}$, where $\partial K$ denotes
the boundary of the element $K$.
Elements are taken to be open sets, whereas mesh faces are taken to be closed sets.
For each element $K\in\mathcal{T}_h$ and $F\in\mathcal{E}_h$, let $h_K$
and $h_F$ be the diameters of $K$ and $F$, respectively. Moreover, we set $h=\max_{K\in\mathcal{T}_h}h_K$.

For any nonnegative integer $s$, an open subset $D\subset\Omega$ and $1\leq p\leq \infty$, let $W^{s,p}(D)$ be the Sobolev spaces \cite{a1975} with
norms
\begin{align*}
\|v\|_{W^{s,p}(D)}^p=\sum_{i=0}^s|v|_{W^{i,p}(D)}^p\quad 1\leq p<\infty,\quad \|v\|_{W^{s,\infty}(D)}=\max_{0\leq i\leq s}|v|_{W^{i,\infty}(D)},
\end{align*}
and seminorms
\begin{align*}
|v|^p_{W^{i,p}(D)}=\sum_{|\alpha|=i}\int_D\left|\frac{\partial^{\alpha}v}{\partial x^{\alpha}}\right|^p dx\quad 1\leq p<\infty,\quad
|v|_{W^{i,\infty}(D)}=\max_{|\alpha|=i}\left\|\frac{\partial^{\alpha}v}{\partial x^{\alpha}}\right\|_{L^{\infty}(D)},
\end{align*}
where
\begin{align*}
\frac{\partial^{\alpha}v}{\partial x^{\alpha}}=\frac{\partial^{|\alpha|}v}{\partial x_1^{\alpha_1}\cdots\partial x_d^{\alpha_d}}
\end{align*}
for the multi-index $\alpha=(\alpha_1,\cdots,\alpha_d),~\alpha_1\geq 0,~\cdots,~\alpha_d\geq 0$, and $|\alpha|=\alpha_1+\cdots+\alpha_d$.
Denote by $W_0^{s,p}(D)~(1<p<\infty)$ the completion of $\mathcal{C}_0^{\infty}(D)$ according to the norm $\|\cdot\|_{W^{s,p}(D)}$, where
$\mathcal{C}_0^{\infty}(D)$ is the space of functions with continuous derivatives of arbitrary order and compact support in $D$. If $p=2$, the
Sobolev spaces $W^{s,2}(D)$ are denoted by $H^s(D)$ with norms $\|\cdot\|_{H^s(D)}$ and seminorms $|\cdot|_{H^s(D)}$. If $s=0$, we set $L^p(D)=W^{0,p}(D)$
with norm $\|\cdot\|_{L^p(D)}$.

Next, we introduce mesh-dependent Sobolev spaces:
\begin{align*}
W^{s,p}_h(D)=\{v: v\in W^{s,p}(K\cap D), ~\forall K\in\mathcal{T}_h\},
\end{align*}
with norms
\begin{align*}
\|v\|_{W_h^{s,p}(D)}^p=\sum_{i=0}^s|v|_{W^{i,p}_h(D)}^p\quad 1\leq p<\infty,\quad \|v\|_{W_h^{s,\infty}(D)}=\max_{0\leq i\leq s}|v|_{W_h^{i,\infty}(D)},
\end{align*}
and seminorms
\begin{align*}
|v|_{W_h^{i,p}(D)}^p=\sum_{K\in\mathcal{T}_h}|v|^p_{W^{i,p}(K\cap D)}\quad 1\leq p<\infty,\quad |v|_{W^{i,\infty}_h(D)}=\max_{K\in\mathcal{T}_h}|v|_{W^{i,\infty}(K\cap D)}.
\end{align*}
If $p=2$, the mesh-dependent Sobolev spaces $W_h^{s,2}(D)$ are denoted by $H_h^s(D)$ with norms $\|\cdot\|_{H_h^s(D)}$ and  seminorms $|\cdot|_{H_h^s(D)}$.
If $s=0$, the mesh-dependent Sobolev spaces $W_h^{0,p}(D)$ are abbreviated by $L_h^p(D)$ with norms $\|\cdot\|_{L_h^p(D)}$.

Let $v \in W^{1,\infty}_{h}(\Omega)$. For each interior face
$F\in\mathcal{E}_h^o$, $\{\nabla v\cdot{\bm n}\}$ and $[v]$ denote the average and jump defined by
\begin{align}
\label{dg_notations}
\{\nabla v\cdot{\bm n}\}=\frac{1}{2}\left(\nabla v^+\cdot{\bm n}^+-\nabla v^-\cdot{\bm n}^-\right),\quad [v]=v^{+}-v^{-},
\end{align}
where $v^+$ and $v^-$ are the restrictions of $v$ on the elements $K^+$ and $K^-$ that share the common face $F$, and
${\bm n}^+$ and ${\bm n}^-$ are the outward unit normal vectors to $\partial K^+$ and $\partial K^-$. For each boundary
face $F\in\mathcal{E}_h^{\partial}$, the average and jump are defined as
$\{\nabla v\cdot{\bm n}\}=\nabla v\cdot{\bm n}$ and $[v]=v$.
In addition, we define the following discrete $W^{1,\infty}$-norm:
\begin{align}
\label{discrete_w_1_infinity_norm}
\vertiii{v}_{W_{h}^{1,\infty}(\Omega)} = \|\nabla v\|_{L_h^{\infty}(\Omega)}
+ \max_{F \in \mathcal{E}_{h}}h_{F}^{-1} \Vert [v] \Vert_{L^{\infty}(F)}.
\end{align}

\section{Complete Discrete $W^{1,\infty}$ Analysis for the Poisson problem}\label{sec3}
In this section, we consider the Poisson equation:
\begin{align}
\label{model}
-\Delta u=f,\quad \text{in}~\Omega; \quad \quad u=0,\quad \text{on}~\partial\Omega.
\end{align}
Here $\Omega$ is a bounded and convex polyhedral domain in $\mathbb{R}^{d}$ ($d=2,3$),
$u$ is the unknown and $f$ is the source term.
For any $f\in L^{2}(\Omega)$, the above problem admits a unique weak solution $u\in
H_0^1(\Omega)\cap H^2(\Omega)$.
By setting ${\bm q}=-\nabla u$, we arrive at the mixed formulation:
\begin{align}
\label{mixed}
{\bm q}+\nabla u=0\quad {\rm in}~\Omega,\quad \nabla\cdot{\bm q}=f\quad {\rm in}~\Omega,\quad
u=0\quad {\rm on}~\partial\Omega.
\end{align}

\subsection{DG methods for the Poisson equation}\label{sec3:1}
For the Poisson equation (\ref{model}), we consider several DG methods: symmetric
IPDG method, hybrid mixed DG method and two representative HDG methods. The hybrid mixed DG method provides the same numerical solution as the standard mixed method which approximates ${\bm q}$ by $H(\text{div})$-conforming Raviart-Thomas elements.

\subsubsection{Interior penalty discontinuous Galerkin methods}

Based on the general polyhedral mesh $\mathcal{T}_h$, we define the following discrete space
\begin{align*}
V_h^{{\rm DG}}=\{v_h\in L^2(\Omega): v_h|_K\in\mathcal{P}^k(K),\quad \forall K\in\mathcal{T}_h\},
\quad (k \geq 1),
\end{align*}
where $\mathcal{P}^k(D)$ denotes the set of polynomials with degree less than or equal to $k$ on the domain $D$.
Then, the IPDG discretization of (\ref{model}) reads as follows: Find $u_h^{\rm DG}\in V_h^{{\rm DG}}$ such that
\begin{align}
\label{IPDG}
a_h(u_h^{{\rm DG}},v_h)=(f,v_h)_{\mathcal{T}_h},\quad \forall v_h\in V_h^{{\rm DG}},
\end{align}
where the bilinear form $a_h$ is defined as:
\begin{align*}
a_h(u_h^{{\rm DG}},v_h)
= & (\nabla u_h^{{\rm DG}},\nabla v_h)_{\mathcal{T}_h}
-\sum_{F\in\mathcal{E}_h}\int_F \left( \{\nabla u_h^{{\rm DG}}\cdot{\bm n}\}[v_h]
+\{\nabla v_h\cdot{\bm n}\}[u_h^{{\rm DG}}] \right) ds \\
& \qquad + \sum_{F\in\mathcal{E}_h} \int_{F} \frac{\lambda}{h_F}[u_h^{{\rm DG}}][v_h] ds,
\end{align*}
where $(\cdot,\cdot)_{\mathfrak{D}}=\sum_{K\in\mathfrak{D}}(\cdot,\cdot)_K$ for any $\mathfrak{D}\subset\mathcal{T}_h$, $(\cdot,\cdot)_K$ denotes the inner product in $L^2(K)$, $\lambda$ is the positive stabilization parameter which is independent of the mesh size $h$ and usually chosen to be
large enough to ensure the stability of (\ref{IPDG}),
$\{\nabla v_h\cdot{\bm n}\}$ and $[v_h]$ are defined as (\ref{dg_notations}).

\subsubsection{Hybridizable discontinuous Galerkin methods}

Based on the mesh $\mathcal{T}_h$ to be specified below, we define
the following discrete spaces:
\begin{align*}
{\bm W}_h=&\{{\bm w}_h\in (L^2(\Omega))^d: {\bm w}_h|_K\in {\bm W}(K),~\forall K\in\mathcal{T}_h\},\\
V_h=&\{v_h\in L^2(\Omega): v_h|_K\in V(K),~\forall K\in\mathcal{T}_h\},\\
\widehat{V}_h=&\{\widehat{v}_h\in L^2(\mathcal{E}_h): \widehat{v}_h|_F\in\mathcal{P}^k(F)~\forall F\in\mathcal{E}_h,~\widehat{v}_h|_F=0~\forall F\in\mathcal{E}_h^{\partial}\},
\end{align*}
where $k\geq 1$ and the local discrete spaces $W(K)$ and $V(K)$ are defined as follows:
\begin{align*}
({\bm W}(K), V(K)) = \begin{cases}
\left((\mathcal{P}^k(K))^d+{\bm x}\mathcal{P}^k(K), \mathcal{P}^k(K)\right),\quad &{\rm hybrid~mixed~DG},
\quad \text{on simplicial meshes};\\
\left((\mathcal{P}^k(K))^d,\mathcal{P}^k(K)\right),\quad &{\rm HDG}-1,\quad \text{on general polyhedral meshes};\\
\left((\mathcal{P}^k(K))^d,\mathcal{P}^{k+1}(K)\right),\quad &{\rm HDG}-2,\quad \text{on general polyhedral meshes}.
\end{cases}
\end{align*}
Then, the HDG discretizations of (\ref{mixed}) read as follows: Find $({\bm q}_h,u_h,\widehat{u}_h)\in
{\bm W}_h\times V_h\times \widehat{V}_h$ such that
\begin{subequations}\label{HDG}
\begin{align}
({\bm q}_h,{\bm w}_h)-(u_h,\nabla\cdot{\bm w}_h)_{\mathcal{T}_h}+\langle\widehat{u}_h,{\bm w}_h\cdot{\bm n}\rangle_{\partial\mathcal{T}_h}=0,\label{HDG:1}\\
-({\bm q}_h,\nabla v_h)_{\mathcal{T}_h}+\langle\widehat{\bm q}_h\cdot{\bm n},v_h\rangle_{\partial\mathcal{T}_h}=(f,v_h),\label{HDG:2}\\
\langle\widehat{\bm q}_h\cdot{\bm n},\widehat{v}_h\rangle_{\partial\mathcal{T}_h}=0,\label{HDG:3}\\
\widehat{\bm q}_h\cdot{\bm n}={\bm q}_h\cdot{\bm n}+\tau(\mathcal{L}u_h-\widehat{u}_h),\quad {\rm on}~\partial\mathcal{T}_h,\label{HDG:4}
\end{align}
\end{subequations}
for any $({\bm w}_h,v_h,\widehat{v}_h)\in {\bm W}_h\times V_h\times \widehat{V}_h$, where ${\bm n}$ is the outward unit normal vector to $\partial K$, $\langle\cdot,\cdot\rangle_{\partial K}$ and $(\cdot,\cdot)$ denote the inner products in $L^2(\partial K)$ and $L^2(\Omega)$, and $\langle\cdot,\cdot\rangle_{\partial \mathfrak{D}}=\sum_{K\in \mathfrak{D}} \langle\cdot,\cdot\rangle_{\partial K}$
for any $\mathfrak{D}\subset\mathcal{T}_h$. Here, $\tau$ denotes the stabilization parameter. On each $K\in\mathcal{T}_h$, it is chosen as:
\begin{align}
\label{HDG_stabs}
\begin{cases}
\tau=0~{\rm on}~\partial K,\quad & {\rm hybrid~mixed~DG};\\
\tau=h_K^{-1}~{\rm on}~F^{\ast}_K~{\rm and}~\tau=0~{\rm on}~\partial K\backslash F_K^{\ast},\quad & {\rm HDG-1};\\
\tau= h_K^{-1}~{\rm on}~\partial K,\quad &{\rm HDG-2}.
\end{cases}
\end{align}
We would like to emphasize that for HDG-1, $F_{K}^{\ast}$ denotes a fixed face of the element $K$
such that $K$ lies on one side of the hyperplane containing $F_{K}^{\ast}$. Therefore, when it is applied
on  general polyhedral mesh, HDG-1 has the restriction that for any element $K\in \mathcal{T}_{h}$, $K$ lies
on one side of the hyperplane containing one of its faces.
$\mathcal{L}$ is a projection operator defined as:
\begin{align*}
\mathcal{L}=\begin{cases}
\mathcal{I},\quad & {\rm hybrid~mixed~DG},~{\rm HDG-1},\\
\Pi_{\widehat{V}},\quad &{\rm HDG-2},
\end{cases}
\end{align*}
where $\mathcal{I}$ is the identity mapping and $\Pi_{\widehat{V}}$ denotes the standard $L^2$-orthogonal projection onto $\mathcal{P}^k(F)$ on each
$F\in\mathcal{E}_h$. Summing the equations in (\ref{HDG}) together, we obtain an equivalent formulation of (\ref{HDG}): Find $({\bm q}_h,u_h,\widehat{u}_h)\in {\bm W}_h\times V_h\times \widehat{V}_h$ such that
\begin{align}
\mathcal{B}({\bm q}_h,u_h,\widehat{u}_h;{\bm w}_h,v_h,\widehat{v}_h)=-(f,v_h),\quad \forall ({\bm w}_h,v_h,\widehat{v}_h)\in {\bm W}_h\times V_h\times \widehat{V}_h,\label{eHDG}
\end{align}
where
\begin{align*}
\mathcal{B}({\bm q}_h,u_h,\widehat{u}_h;{\bm w}_h,v_h,\widehat{v}_h)=&({\bm q}_h,{\bm w}_h)-(u_h,\nabla\cdot{\bm w}_h)_{\mathcal{T}_h}
+\langle\widehat{u}_h,{\bm w}_h\cdot{\bm n}\rangle_{\partial\mathcal{T}_h}\\
&-(\nabla\cdot{\bm q}_h,v_h)_{\mathcal{T}_h}-\langle\tau(\mathcal{L}u_h-\widehat{u}_h),\mathcal{L}v_h-\widehat{v}_h\rangle_{\partial \mathcal{T}_h}
+\langle{\bm q}_h\cdot{\bm n},\widehat{v}_h\rangle_{\partial\mathcal{T}_h}.
\end{align*}

The hybrid mixed DG method, HDG-1 and HDG-2 are introduced in \cite{es2010,cfq2018,qs2016} respectively.
In fact, the hybrid mixed DG method provides the same numerical solution as the mixed method which approximates
${\bm q}$ by $H(\text{div})$-conforming Raviart-Thomas elements.

\subsection{Main results of DG methods for the Poisson equation}\label{sec3:2}

We assume that there is a $s>0$ such that the solution $u$ of the Poisson equation (\ref{model}) satisfies
\begin{align}
\label{model_assumptions}
u \in W^{1+s,\infty}(\Omega).
\end{align}
According to the embedding theorem \cite[Theorem~$1.4.4.1$]{g1985}, (\ref{model_assumptions}) implies
$u \in C^{1,\alpha}(\overline{\Omega})$ with $\alpha\in (0,1)$ depending on $s$. In the rest of this paper, let $\Pi_{k}$, $\Pi_{W}$
and $\Pi_{V}$ be the standard $L^2$-orthogonal projections onto $V_h^{{\rm DG}}$, ${\bm W}_h$ and $V_h$, respectively.

\subsubsection{$L^{\infty}$-norm estimate of the jump of DG solutions along mesh interfaces}

For IPDG \cite{cc2005}, LDG \cite{c2005, g2006}, and IP-HDG \cite{lc2026} methods, the authors established (\ref{nm-mn:1}) and (\ref{nm-mn:2}) for a nonnegative constant $\beta$.
It is well known that $\|\nabla(u-u_h^{{\rm nm}})\|_{L_h^{\infty}(\Omega)}$ defines a norm for conforming finite element
methods, whereas for nonconforming methods, such as IPDG, LDG, IP-HDG, and HDG, it is merely a seminorm.
Therefore, for nonconforming methods, it is important to
obtain a complete discrete $W^{1,\infty}$ estimate of $u - u_{h}^{{\rm nm}}$, which is given by
\begin{align}
\label{discrete_w_1_infinity}
\vertiii{u-u_h^{{\rm nm}}}_{W_{h}^{1,\infty}(\Omega)}
= \|\nabla (u-u_h^{{\rm nm}} ) \|_{L_h^{\infty}(\Omega)}
+ \max_{F \in \mathcal{E}_{h}}h_{F}^{-1} \Vert [u-u_h^{{\rm nm}}] \Vert_{L^{\infty}(F)}.
\end{align}
Consequently, it is critical to estimate $\max_{F \in \mathcal{E}_{h}}h_{F}^{-1} \Vert [u-u_h^{{\rm nm}}] \Vert_{L^{\infty}(F)}$.
One possible approach is to utilize (\ref{nm-mn:1}) and  (\ref{nm-mn:2}) with a trace inequality yielding
\begin{align}
\label{naive_w_1_infinity_estimate}
\max_{F \in \mathcal{E}_{h}}h_{F}^{-1} \Vert [u-u_h^{{\rm nm}}] \Vert_{L^{\infty}(F)}
\leq C |\ln h|^{\beta} \left( h^{-1}\|u-\chi\|_{L^{\infty}(\Omega)}+\|\nabla(u-\chi)\|_{L_h^{\infty}(\Omega)}\right),
\end{align}
where $\beta$ is the same as that in (\ref{nm-mn:1}). For nonconforming methods, $\beta = 0$ has only been proven for
domains with smooth boundaries and polynomial degrees larger than one (see IPDG \cite{cc2005} and LDG
\cite{c2005,g2006} methods). On convex polyhedral domains, existing work \cite{lc2026} can only provide (\ref{nm-mn:1})
with a positive $\beta$ independent of the mesh size $h$. To show $\beta = 0$ in (\ref{nm-mn:1}), one may want to utilize the
weak discrete maximum principle. However, to the best of our knowledge, the weak discrete maximum principle has only been proven for conforming finite
element methods on convex polygons \cite{s1980} and convex polyhedra \cite{ll2020} with polynomial degrees
greater than one. Furthermore, the analogue of the analysis in \cite{ll2020} for nonconforming methods involves
an estimate of $\Sigma_{F\in \mathcal{E}_{h}}\Vert [u - u_{h}^{{\rm nm}}]\Vert_{L^{p}(F)}^{p}$ for some $p$ in a neighborhood of $2$, which
appears to require an estimate of $\max_{F \in \mathcal{E}_{h}}h_{F}^{-1} \Vert [u-u_h^{{\rm nm}}] \Vert_{L^{\infty}(F)}$.
Thus, on convex polyhedral domains,  it is not clear how to remove the logarithmic factor
in (\ref{naive_w_1_infinity_estimate}) for nonconforming methods.

Our Theorem~\ref{mn-jump} shows that the logarithmic factor can be removed in the estimate (\ref{naive_w_1_infinity_estimate})
for the IPDG method and three HDG methods we considered in this paper. The proof of Theorem~\ref{mn-jump} relies on a new
methodology we develop in Section \ref{sec3:3}. We emphasize that this method is sufficiently general and can be extended
to other nonconforming methods. Next, we give a brief description of the new methodology developed herein. In the proof of
the maximum norm error estimate, it is well known that the local energy estimates for the regularized
Green's functions play an important role in determining whether the right-hand sides of the estimates (\ref{nm-mn:1}) and
(\ref{nm-mn:2}) depends on the logarithmic factor, and these estimates are precisely the main source of difficulty in proving Theorem \ref{mn-jump}. To address this issue, using a cancellation technique between neighboring discrete delta functions, we establish a new local energy estimate
of specially designed interface Green's functions (see (\ref{ee-rG23:1}) in Lemma \ref{ee-rG23}), which differs substantially from the corresponding estimates in \cite[Lemma 4.3]{cc2005}, \cite[(4.10)]{dlsw2012}, and \cite[Lemma 3.7]{lc2026}. More precisely, the right-hand side of (\ref{ee-rG23:1}) contains an
additional factor of $h$, which plays an essential role for removing the logarithmic factor in the proof of Theorem \ref{mn-jump} (see Section \ref{sec3:3}).

\begin{theorem}\label{mn-jump}
Let $(u,{\bm q})$ be the solution of problem (\ref{mixed}). Let $u_h^{{\rm DG}}$ and $({\bm q}_h,u_h,\widehat{u}_h)$ denote the
numerical solutions obtained by IPDG and HDG methods defined in (\ref{IPDG}) and (\ref{HDG}), respectively. We assume (\ref{model_assumptions})
holds. Then we have
\begin{align}
\max_{F\in\mathcal{E}_h}h_F^{-1}\|[u-u_h^{{\rm DG}}]\|_{L^{\infty}(F)}\leq C\|\nabla(u-\Pi_ku)\|_{L_h^{\infty}(\Omega)},\label{mn-jump:1}
\end{align}
and
\begin{align}
\max_{F\in\mathcal{E}_h}h_F^{-1}\|[u-u_h]\|_{L^{\infty}(F)}\leq& C\Big(\|\nabla(u-\Pi_Vu)\|_{L_h^{\infty}(\Omega)}
+\|{\bm q}-\Pi_W{\bm q}\|_{L^{\infty}(\Omega)}\Big),\label{mn-jump:2}
\end{align}
where the positive constants $C$ are independent of the mesh size $h$.
\end{theorem}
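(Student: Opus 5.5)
We will prove (\ref{mn-jump:1}) for IPDG in full and then indicate the changes needed for the HDG family. Split $u-u_h^{\rm DG}=(u-\Pi_k u)+(\Pi_k u-u_h^{\rm DG})$. The first piece is harmless. By the trace inequality and a Bramble--Hilbert/Poincar\'e argument on each element, $h_F^{-1}\|[u-\Pi_ku]\|_{L^\infty(F)}\le C\|\nabla(u-\Pi_ku)\|_{L^\infty_h(\omega_F)}$. This uses that $u$ is continuous and vanishes on $\partial\Omega$, and that $u-\Pi_ku$ has zero mean on each element. So it suffices to bound $e_h=\Pi_ku-u_h^{\rm DG}$.

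Fix a face $F=\partial K^+\cap\partial K^-$ and a point $x_0\in F$ where $|[e_h]|$ is attained. Since $[e_h]|_F$ is a polynomial, we can construct discrete delta functions $\delta^\pm\in\mathcal P^k(K^\pm)$ supported in $K^\pm$ with
\[
(\delta^\pm,v_h)_{K^\pm}=v_h^\pm(x_0)\quad\text{for all } v_h\in V_h^{\rm DG},\qquad \|\delta^\pm\|_{L^\infty}\le Ch^{-d}.
\]
Then $[e_h](x_0)=(\delta^+-\delta^-,e_h)$. Let $g$ solve $-\Delta g=\delta^+-\delta^-$ in $\Omega$ with $g=0$ on $\partial\Omega$; this is the interface Green's function. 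Let $g_h\in V_h^{\rm DG}$ be its IPDG approximation. By symmetry of $a_h$ and Galerkin orthogonality,
\[
[e_h](x_0)=a_h(e_h,g_h)=a_h(\Pi_ku-u,g_h)=a_h(\Pi_ku-u,g_h-g)+a_h(\Pi_ku-u,g).
\]
Each term is bounded by $\|\nabla(u-\Pi_ku)\|_{L^\infty_h}$ times weighted $L^1$-type quantities. Since $u-\Pi_ku$ is $L^2$-orthogonal to polynomials, its volume and face traces carry an extra $h$ relative to the gradient, so the penalty and consistency terms contribute factors such as $h\|\nabla(u-\Pi_ku)\|_{L^\infty_h}$ times $\sum_F h_F^{-1}\|[g-g_h]\|_{L^1(F)}$. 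The goal is therefore the key estimate
\[
\|\nabla(g-g_h)\|_{L^1_h(\Omega)}+\sum_{F'}h_{F'}^{-1}\|[g_h]\|_{L^1(F')}+\|\nabla g\|_{L^1(\Omega)}\le Ch.
\]
This is exactly one power of $h$ better than the classical bound $\|\nabla g\|_{L^1}\le C$ for a single delta (in the single-delta case $\nabla g$ is $L^1$ near a point, and $h^{-1}\times$ an $L^1$-jump bound of $O(1)$ is what produces the $\log$). Dividing by $h_F$ then gives (\ref{mn-jump:1}).

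The main obstacle is this $O(h)$ bound, which is the content of Lemma \ref{ee-rG23} (\ref{ee-rG23:1}). Let $\Omega_j$ be dyadic annuli $\{x:2^{j-1}d_j\le |x-x_0|\le 2^{j}d_j\}$ with $d_j=2^j\kappa h$. Near $x_0$, for $j$ small, we use the standard local $H^1$ bound, giving $\|\nabla g\|_{L^2(\Omega_0)}\le Ch^{1-d/2}$ and hence an $O(h)$ contribution in $L^1$. Away from $x_0$ the cancellation enters. The source $\delta^+-\delta^-$ has zero mean, and in fact annihilates constants, so testing against Taylor expansions gives
\[
g(x)=\int(\delta^+-\delta^-)(y)\,[G(x,y)-G(x,x_0)]\,dy=O\bigl(h\,|\nabla_yG|\bigr).
\]
The convex-polyhedron Green's function bounds $|\nabla_yG(x,y)|\le C|x-y|^{1-d}$, and more usefully local $H^1$/$H^2$ estimates for duals, then give $\|\nabla g\|_{L^2(\Omega_j)}\le Ch\,d_j^{-d/2}$. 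Summing, $\sum_j d_j^{d/2}\,h\,d_j^{-d/2}$ would be a $\log$ sum. To get summability we need $\|\nabla g\|_{L^2(\Omega_j)}\le Ch\,d_j^{-d/2}(h/d_j)^{\sigma}$, or better. We obtain this from the second-difference structure: $\delta^\pm$ reproduce point values at the same $x_0$ for all polynomials of degree $\le k$, so the source kills affine functions to order $h^2$ when $k\ge1$. This yields the decay $h^2 d_j^{-d/2-1}$, which is summable and gives $O(h)$.

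For the discrete error $g-g_h$ we apply the local energy error estimates on convex polyhedra (Schatz--Wahlbin type, as in \cite{lc2026}), with the superapproximation/duality pollution term. That term is bounded by $\|g-g_h\|_{L^2}$, which a duality argument controls by $h\|\nabla(g-g_h)\|$, together with the same weighted sums. We expect the weight $(h/d_j)^\sigma$ to survive kickback, and this is where care is needed.

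For HDG, the same scheme applies with the mixed form $\mathcal B$. We represent $[e_u](x_0)$ through the HDG approximation of the mixed interface Green's problem $(\bm\psi,g)$, and use the projection-based error equations so that only $\nabla(u-\Pi_Vu)$ and $\bm q-\Pi_W\bm q$ appear. The required $O(h)$ bound on $\|\bm\psi-\bm\psi_h\|_{L^1}$ and on the $\tau$-weighted $L^1$ jumps follows from the analogous dyadic argument, which gives (\ref{mn-jump:2}).
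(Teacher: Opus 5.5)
Your architecture is the paper's: a Green's function with source $\delta^+-\delta^-$, the representation $[e_h](x_0)=a_h(\Pi_ku-u,g_h-g)+a_h(\Pi_ku-u,g)$, cancellation of the zeroth-order term $G(x,x_0)$, and dyadic local energy estimates with duality and kickback. But you misidentify where the extra power of $h$ comes from, and the mechanism you put in its place fails on convex polyhedra.

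You argue that first-order cancellation gives only $\|\nabla g\|_{L^2(\Omega_j)}\le Chd_j^{-d/2}$, hence a logarithmic sum. You then upgrade this to $h^2d_j^{-d/2-1}$ by using the vanishing first moments of $\delta^\pm$ and a second-order Taylor expansion. That step needs bounds such as $|D_y^2\nabla_xG(x,y)|\le C|x-y|^{-d-1}$ for $y$ near $x_0$, uniformly up to $\partial\Omega$. On a convex polyhedron you only have derivatives of order at most one in each variable, (\ref{pe-G}), plus the H\"older bound (\ref{pe-G:1}). The claimed rate is in fact false in general. Near a corner of angle $\omega\in(\pi/2,\pi)$ one has $G(x,y)\approx c(x)\rho^{\pi/\omega}\sin(\pi\theta/\omega)$ with $c(x)\sim|x-x_0|^{-\pi/\omega}$. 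So for $x_0$ at such a vertex (an endpoint of an interior edge), generically $|g(x)|\sim(h/|x-x_0|)^{\pi/\omega}$, which decays strictly slower than $(h/d_j)^2$. A H\"older variant giving a factor $(h/d_j)^{\gamma}$, via (\ref{pe-G:1}) and Caccioppoli, could rescue your route, but you do not give it. You also do not treat boundary faces, where $[v]=v$ and the cancellation must come from $G(x,x^{\ast})=0$ with a single delta.

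The idea you are missing is that no extra decay of $g$ is needed.
\begin{itemize}
\item \emph{Summability.} What must be summed is the local discretization error. Lemma \ref{lee} with interpolation bounds controls it by $h|g|_{H^2(\Omega_j^2)}$ plus a pollution term, not by $\|\nabla g\|_{L^2(\Omega_j)}$. The first-order bounds $|g|\le Chd_j^{1-d}$ and $|\nabla g|\le Chd_j^{-d}$ on $\Omega_j^3$, with a cutoff, give $|g|_{H^2(\Omega_j^2)}\le Chd_j^{-1-d/2}$ (Lemma \ref{ee-rG23}). Then $\sum_j d_j^{d/2}\,h\cdot hd_j^{-1-d/2}=h\sum_j h/d_j\le Ch$.
\item \emph{$\|\nabla g\|_{L^1(\Omega)}$ is not needed.} Directly, $a_h(\Pi_ku-u,g)=(\delta^+-\delta^-,\Pi_ku-u)\le Ch\|\nabla(u-\Pi_ku)\|_{L_h^\infty(\Omega)}$. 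First-order cancellation alone would only give $\|\nabla g\|_{L^1}\le Ch|\ln h|$.
\item \emph{The jump term of your key estimate is false.} By coercivity (test with the indicator of $K^+$), $[g_h](x_0)=a_h(g_h,g_h)\geq c\,h^{2-d}$. Hence $\sum_Fh_F^{-1}\|[g_h]\|_{L^1(F)}\geq c>0$, not $O(h)$. What is needed, and true, is $\sum_F\|[g-g_h]\|_{L^1(F)}+\sum_Fh_F\|\{\nabla(g-g_h)\cdot\bm n\}\|_{L^1(F)}+\|\nabla(g-g_h)\|_{L_h^1(\Omega)}\le Ch$. The flux term, which you omit, is required for $\int_F\{\nabla(g_h-g)\cdot\bm n\}[\Pi_ku-u]$.
\item \emph{The pollution term needs a localized duality argument.} Use a dual source supported in $\Omega_j^1$. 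The part of $a_h(W-\Pi_kW,g-g_h)$ on $\Omega_j^3$ costs $h/d_j$ times the local energy error. The remainder costs $(h/d_j)^{\gamma}$ times the total $L^1$ error and is absorbed for $M$ large. The global bound $\|g-g_h\|_{L^2(\Omega)}\le Ch\|\nabla(g-g_h)\|_{L_h^2(\Omega)}$ only yields $h|\ln h|$ for $d=2$ and $h^{1/2}$ for $d=3$.
\end{itemize}
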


\subsubsection{Discrete $W^{1,\infty}$-seminorm estimate of DG solutions on elements}

Next, we establish error estimates for $\|\nabla(u-u_h^{{\rm DG}})\|_{L_h^{\infty}(\Omega)}$, $\|\nabla(u-u_h)\|_{L_h^{\infty}(\Omega)}$, and $\|{\bm q}-{\bm q}_h\|_{L^{\infty}(\Omega)}$. We note that the detailed proofs are provided in Section \ref{sec3:4}.

\begin{theorem}\label{mn-g}
Let $(u,{\bm q})$ be the solution of problem (\ref{mixed}). Let $u_h^{{\rm DG}}$ and $({\bm q}_h,u_h,\widehat{u}_h)$ denote the
numerical solutions obtained by IPDG and HDG methods defined in (\ref{IPDG}) and (\ref{HDG}), respectively. We assume (\ref{model_assumptions})
holds. Then we have
\begin{align}
\|\nabla(u-u_h^{{\rm DG}})\|_{L^{\infty}_h(\Omega)}\leq C\|\nabla(u-\Pi_ku)\|_{L_h^{\infty}(\Omega)},
\end{align}
and
\begin{subequations}
\begin{align}
\|\nabla(u-u_h)\|_{L_h^{\infty}(\Omega)}\leq C\left(\|\nabla(u-\Pi_Vu)\|_{L_h^{\infty}(\Omega)}
+\|{\bm q}-\Pi_W{\bm q}\|_{L^{\infty}(\Omega)}\right),\\
\|{\bm q}-{\bm q}_h\|_{L^{\infty}(\Omega)}\leq C\left(\|\nabla(u-\Pi_Vu)\|_{L_h^{\infty}(\Omega)}
+\|{\bm q}-\Pi_W{\bm q}\|_{L^{\infty}(\Omega)}\right),\label{mn-g:3}
\end{align}
\end{subequations}
where the positive constants $C$ are independent of the mesh size $h$.
\end{theorem}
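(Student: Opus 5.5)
We prove Theorem~\ref{mn-g} by a regularized Green's function argument combined with local energy estimates and a dyadic decomposition, following the classical pattern of \cite{cc2005,lc2026} but written so that the result is log-free. We treat IPDG first; the HDG methods follow by the same argument applied to $\mathcal B$.

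Fix $K_0\in\mathcal T_h$, a point $x_0\in K_0$ and a direction $\bm\ell$. We split $u-u_h^{\rm DG}=(u-\Pi_ku)+(\Pi_ku-u_h^{\rm DG})$. The first term is already of the desired form. For $e_h=\Pi_ku-u_h^{\rm DG}$ we choose a discrete delta $\delta_h\in\mathcal P^k(K_0)$, supported in $K_0$, such that $(\delta_h,v)_{K_0}=v(x_0)$ for all $v\in\mathcal P^k(K_0)$ and $\|\delta_h\|_{W^{j,\infty}}\le Ch^{-d-j}$. Let $g\in H^1_0(\Omega)\cap H^2(\Omega)$ solve $-\Delta g=-\partial_{\bm\ell}\delta_h$, and let $g_h$ be its IPDG approximation. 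Then $\partial_{\bm\ell}e_h(x_0)=(e_h,-\partial_{\bm\ell}\delta_h)+(\text{boundary terms on }\partial K_0)$. These boundary terms are handled by integrating by parts only inside $K_0$, or equivalently by defining $g$ through the functional $v\mapsto(\partial_{\bm\ell}v,\delta_h)_{K_0}$. Symmetry of $a_h$ and Galerkin orthogonality give
\[
\partial_{\bm\ell}e_h(x_0)=a_h(e_h,g_h)=a_h(\Pi_ku-u,g_h),
\]
and consistency lets us replace $g_h$ by $g_h-g$ plus the term $a_h(\Pi_ku-u,g)$. After using the $L^2$-orthogonality of $\Pi_k$, everything reduces to bounding
\[
\|\nabla(g-g_h)\|_{L^1_h(\Omega)}+\sum_F h_F^{-1}\|[g_h]\|_{L^1(F)}+\sum_F\|\{\nabla g\cdot\bm n\}\|_{L^1(F)}h_F
\]
by a constant independent of $h$. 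These terms arise after bounding $\|\nabla(u-\Pi_ku)\|_{L^\infty}$ together with the trace and inverse estimates $\|u-\Pi_ku\|_{L^\infty(F)}\le Ch\|\nabla(u-\Pi_ku)\|_{L^\infty_h}$.

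The core step, which we expect to be the main obstacle, is the uniform $W^{1,1}$-type bound $\|g-g_h\|_{1,1,h}\le C$. We plan a dyadic decomposition of $\Omega$ into annuli $\Omega_j=\{x: d_j/2\le|x-x_0|\le d_j\}$ with $d_j=2^{-j}$, together with $\Omega_*=B(x_0,Mh)$. On each annulus Cauchy--Schwarz gives $\|\cdot\|_{L^1(\Omega_j)}\le Cd_j^{d/2}\|\cdot\|_{L^2(\Omega_j)}$. A local energy estimate for DG (a discrete Caccioppoli argument with superapproximation) bounds $\|\nabla(g-g_h)\|_{L^2(\Omega_j)}$ by $Ch\,|g|_{H^2(\Omega_j')}+Cd_j^{-1}\|g-g_h\|_{L^2(\Omega_j')}$ plus a small multiple of the error on a neighbouring annulus. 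The Green's function bounds on convex polyhedra, $|g|_{H^2(\Omega_j')}\le Cd_j^{-d/2-1}$, and a duality estimate $\|g-g_h\|_{L^2(\Omega_j')}\le Ch\,d_j^{-d/2}\cdot(h/d_j)^{\sigma}$ for some $\sigma>0$ then yield a geometric series $\sum_j (h/d_j)^{\sigma}\le C$. The source of the extra factor $(h/d_j)^\sigma$, which is what makes the result log-free, is that the source is a derivative of a delta; this gives decay like $|x-x_0|^{-d}$ for $\nabla g$ away from $x_0$, one order better than for the value. The convex-polyhedron regularity used here is the weighted $H^2$ estimate of \cite{glrs2009,lc2026}, which is valid up to the boundary. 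Near $x_0$ we use the global energy estimate $\|\nabla(g-g_h)\|\le Ch|g|_{H^2}\le Ch^{-d/2}$, which suffices on $\Omega_*$. We take $M$ large and absorb the remaining terms by kick-back.

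The jump and face-flux terms are handled by the same dyadic bounds, using trace inequalities element by element. Combining the above gives $|\nabla e_h(x_0)|\le C\|\nabla(u-\Pi_ku)\|_{L^\infty_h}$ and hence the IPDG estimate. For HDG we use the analogous Green's pair $(\bm q^g,g,\widehat g)$ solving the adjoint of (\ref{eHDG}) with the same source. Orthogonality yields $\mathcal B(\Pi_W\bm q-\bm q,\Pi_Vu-u,\cdot\,;\bm q^g_h,g_h,\widehat g_h)$, and the needed bounds are the HDG local energy estimates with the projection-based errors. This gives $\|\nabla(u-u_h)\|_{L^\infty_h}$. The estimate for $\bm q-\bm q_h$ then follows from the first HDG equation (\ref{HDG:1}) elementwise, which expresses $\bm q_h+\nabla u_h$ through lifted jumps $\mathcal L u_h-\widehat u_h$. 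These lifted jumps are controlled by Theorem~\ref{mn-jump} together with an inverse estimate, giving (\ref{mn-g:3}).
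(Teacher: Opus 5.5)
\textbf{Gap: the flux estimate.} The step that fails is (\ref{mn-g:3}). From (\ref{HDG:1}) you correctly get $(\bm q_h+\nabla u_h,\bm w_h)_K=\langle u_h-\widehat u_h,\bm w_h\cdot\bm n\rangle_{\partial K}$, and hence
\[
\|\bm q-\bm q_h\|_{L^\infty(K)}\le\|\nabla(u-u_h)\|_{L^\infty(K)}+Ch_K^{-1}\|u_h-\widehat u_h\|_{L^\infty(\partial K)}.
\]
But $u_h-\widehat u_h$ is the gap between the element trace and the \emph{numerical trace}. It is not the inter-element jump $[u_h]=u_h^+-u_h^-$ that Theorem \ref{mn-jump} controls, and smallness of $[u_h]$ says nothing about $\widehat u_h$:
\begin{itemize}
\item For the hybrid mixed method ($\tau=0$), $\widehat u_h$ is a pure Lagrange multiplier.
\item For HDG-1 and HDG-2, (\ref{HDG:3})--(\ref{HDG:4}) give $(\tau^++\tau^-)(\mathcal L u_h^+-\widehat u_h)=-[\bm q_h\cdot\bm n]+\tau^-\mathcal L[u_h]$. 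So $h_K^{-1}(\mathcal L u_h-\widehat u_h)$ contains the flux jump $[\bm q_h\cdot\bm n]$, which is bounded only by $\|\bm q-\bm q_h\|_{L^\infty(\Omega)}$.
\end{itemize}
Every available bound on $h_K^{-1}\|u_h-\widehat u_h\|$ (Lemma \ref{inequality}, Corollaries \ref{mn-HDG} and \ref{mn-HDG-1}) has $\|\Pi_W\bm q-\bm q_h\|$ or $\|\bm q-\bm q_h\|$ on the right with a constant that is not small. In the paper those corollaries are in fact deduced \emph{from} (\ref{mn-g:3}). Your argument is therefore circular at this step, and no kick-back rescues it.

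The missing idea is a separate regularized Green's function for the flux. The paper uses the second mixed form (\ref{rG1-2}), $\boldsymbol{\mathcal R}_2=-\nabla\Gamma-\bm e_i\delta_{h,x_0}$ with $\nabla\cdot\boldsymbol{\mathcal R}_2=0$. Its HDG approximation (\ref{rG1-2-HDG}) puts the source $-(\bm e_i\delta_{h,x_0},\bm w_h)$ in the constitutive equation. This gives the direct representation (\ref{rG1-2:1}) of $((\Pi_W\bm q-\bm q_h)\cdot\bm e_i)(x_0)$. The same dyadic $L^1$ bound (Lemma \ref{rG1-L1} with $\mathrm{s}=2$) then yields (\ref{mn-g:3}) without ever touching $u_h-\widehat u_h$.

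\textbf{The rest of your proposal.} The gradient estimates for IPDG and HDG are essentially the paper's route: the source $\nabla\cdot(\bm e_i\delta_{h,x_0})$, the representation formula, and a uniform $L^1$ bound for the Green's-function error. That bound comes from the dyadic decomposition, local energy estimates, $|\Gamma|_{H^2(\Omega_j^2(x_0))}\le Cd_j^{-1-d/2}$, and a duality argument whose far-field part is absorbed for $M$ large. Two technical corrections:
\begin{itemize}
\item $\delta_h$ must be smooth with compact support in $K_0$, as in (\ref{ddd:1})--(\ref{ddd:2}). A polynomial on $K_0$ extended by zero creates a layer source on $\partial K_0$ and destroys the $H^2$ regularity of $g$ that you use near $x_0$.
\item The face quantity to bound is $\sum_F\|[g-g_h]\|_{L^1(F)}$, paired with $h^{-1}\|u-\Pi_ku\|_{L^\infty(\Omega)}$. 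It is not $\sum_F h_F^{-1}\|[g_h]\|_{L^1(F)}$, which carries an extra factor $h^{-1}$ and is not uniformly bounded.
\end{itemize}
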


\begin{remark}
\label{remark_complete_discrete_W_1_infinity}
Let $(u,{\bm q})$ be the solution of problem (\ref{mixed}). Let $u_h^{{\rm DG}}$ and $({\bm q}_h,u_h,\widehat{u}_h)$ denote the
numerical solutions obtained by IPDG and HDG methods defined in (\ref{IPDG}) and (\ref{HDG}), respectively. We assume (\ref{model_assumptions})
holds. According to Theorem \ref{mn-jump} and Theorem \ref{mn-g}, we immediately have
\begin{align*}
\vertiii{u - u_{h}^{{\rm DG}}}_{W_{h}^{1,\infty}(\Omega)} \leq & C\|\nabla(u-\Pi_ku)\|_{L_h^{\infty}(\Omega)},\\
\vertiii{u - u_{h}}_{W_{h}^{1,\infty}(\Omega)} \leq & C\left(\|\nabla(u-\Pi_Vu)\|_{L_h^{\infty}(\Omega)}
+\|{\bm q}-\Pi_W{\bm q}\|_{L^{\infty}(\Omega)}\right).
\end{align*}
\end{remark}
\begin{remark}
For the HDG-1 method, if we let $\Pi_V$ be a projection operator from $H^1(\Omega)$ onto $V_h$ such that for each element $K\in\mathcal{T}_h$,
\begin{subequations}\label{proj_HDG-1}
\begin{align}
(\Pi_Vu,v_h)_K=(u,v_h)_K,\quad \forall v_h\in\mathcal{P}^{k-1}(K),\\
\langle\Pi_Vu,\widehat{v}_h\rangle_{F_K^{\ast}}=\langle u,\widehat{v}_h\rangle_{F_K^{\ast}},\quad \forall\widehat{v}_h\in\mathcal{P}^k(F_K^{\ast}),
\end{align}
\end{subequations}
then for the HDG-1 method, the result (\ref{mn-g:3}) can be improved to
\begin{align}
\|{\bm q}-{\bm q}_h\|_{L^{\infty}(\Omega)}\leq C\|{\bm q}-\Pi_W{\bm q}\|_{L^{\infty}(\Omega)},\label{mn-g:4}
\end{align}
where $F_{K}^{\ast}$ is introduced in (\ref{HDG_stabs}) for HDG-1.
In fact, the projection defined in \cite[($4.8$)]{cfq2018} is the vectorization of $\Pi_{V}$ defined in (\ref{proj_HDG-1}).
Moreover, we refer to \cite[Proposition~$4.3$]{cfq2018} for the approximation properties of the projection operator $\Pi_{V}$.

Next, we briefly explain how to obtain (\ref{mn-g:4}). For $x_0\in K_{x_0}$ with $K_{x_0}\in\mathcal{T}_h$, let $(\Gamma,\boldsymbol{\mathcal{R}}_2)$
and $(\boldsymbol{\mathcal{R}}_{2,h},\Gamma_{2,h},\widehat{\Gamma}_{2,h})$ be the solutions of problems (\ref{rG1-2}) and (\ref{rG1-2-HDG}), respectively.
Then, similar to (\ref{rG1-2:1}), from (\ref{proj_HDG-1}) and (\ref{ddd:2}) we have
\begin{align}
-((\Pi_W{\bm q}-{\bm q}_h)\cdot{\bm e}_i)(x_0)=&(\boldsymbol{\mathcal{R}}_{2,h}-\boldsymbol{\mathcal{R}}_2,\Pi_W{\bm q}-{\bm q})+(\nabla(\Gamma_{2,h}-\Gamma),
\Pi_W{\bm q}-{\bm q})_{\mathcal{T}_h}\nonumber\\
&-\langle\Gamma_{2,h}-\widehat{\Gamma}_{2,h},(\Pi_W{\bm q}-{\bm q})\cdot{\bm n}\rangle_{\partial\mathcal{T}_h}-({\bm e}_i\delta_{h,x_0},\Pi_W{\bm q}-{\bm q})
_{\mathcal{T}_h}\label{mn-g:4-proof}\\
\leq&C\|{\bm q}-\Pi_W{\bm q}\|_{L^{\infty}(\Omega)}\Big(\|\boldsymbol{\mathcal{R}}_{2,h}-\boldsymbol{\mathcal{R}}_2\|_{L^1(\Omega)}+
\|\nabla(\Gamma_{2,h}-\Gamma)\|_{L_h^1(\Omega)}\nonumber\\
&+\sum_{K\in\mathcal{T}_h}\|\Gamma_{2,h}-\widehat{\Gamma}_{2,h}\|_{L^1(\partial K)}+1\Big),\nonumber
\end{align}
where ${\bm e}_i$ denotes the $i$-the standard basis vector in $\mathbb{R}^d$. Using the argument presented in the proof of Lemma \ref{rG23-L1} (with Lemma \ref{ee-rG23} replaced by Lemma \ref{rG1-lee}), we can obtain (with the proof omitted)
\begin{align*}
\|\boldsymbol{\mathcal{R}}_{2,h}-\boldsymbol{\mathcal{R}}_2\|_{L^1(\Omega)}+
\|\nabla(\Gamma_{2,h}-\Gamma)\|_{L_h^1(\Omega)}+\sum_{K\in\mathcal{T}_h}\|\Gamma_{2,h}-\widehat{\Gamma}_{2,h}\|_{L^1(\partial K)}\leq C,
\end{align*}
which, together with (\ref{mn-g:4-proof}) and the triangle inequality, yields (\ref{mn-g:4}).
\end{remark}

\subsubsection{$L^{\infty}$-norm estimate of $u_{h} - \widehat{u}_{h}$ of HDG methods along the boundary of elements}

Based on Theorem \ref{mn-jump} and Theorem \ref{mn-g}, we prove the following two corollaries which are specialized for HDG methods. We note that the proofs
of Corollary \ref{mn-HDG} and Corollary \ref{mn-HDG-1} are given in Section \ref{sec3:5}.
\begin{corollary}\label{mn-HDG}
We assume (\ref{model_assumptions}) holds.
Let $(u,{\bm q})$ and $({\bm q}_h,u_h,\widehat{u}_h)$ be the solutions of problems (\ref{mixed}) and (\ref{HDG}), then for each $K\in\mathcal{T}_h$ we have
\begin{align}
\|\tau(\mathcal{L}u_h-\widehat{u}_h)\|_{L^{\infty}(\partial K)}\leq& C\Big(\|{\bm q}-\Pi_W{\bm q}\|_{L^{\infty}(\Omega)}
+\|\nabla(u-\Pi_Vu)\|_{L_h^{\infty}(\Omega)}\Big),\label{mn-HDG:1}
\end{align}
where the positive constant $C$ is independent of the mesh size $h$.
\end{corollary}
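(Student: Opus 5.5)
The plan is to derive (\ref{mn-HDG:1}) from Theorems \ref{mn-jump} and \ref{mn-g} by extracting $\tau(\mathcal{L}u_h-\widehat{u}_h)$ from the HDG equations through a local, element-by-element argument.

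\textbf{Reduction to the flux error.} For hybrid mixed DG we have $\tau=0$, so there is nothing to prove. For HDG-1 and HDG-2 we use $\widehat{\bm q}_h\cdot{\bm n}={\bm q}_h\cdot{\bm n}+\tau(\mathcal{L}u_h-\widehat{u}_h)$, so that
\[
\tau(\mathcal{L}u_h-\widehat{u}_h)=(\widehat{\bm q}_h-{\bm q}_h)\cdot{\bm n}.
\]
It therefore suffices to bound the difference between numerical and elementwise fluxes on $\partial K$.

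\textbf{Local equation for $\tau(\mathcal{L}u_h-\widehat{u}_h)$.} Subtract the local identity $-({\bm q},\nabla v_h)_K+\langle{\bm q}\cdot{\bm n},v_h\rangle_{\partial K}=(f,v_h)_K$ from (\ref{HDG:2}) on a single element $K$. This gives, for all $v_h\in V(K)$,
\[
\langle \tau(\mathcal{L}u_h-\widehat{u}_h),v_h\rangle_{\partial K}
=({\bm q}_h-{\bm q},\nabla v_h)_K-\langle({\bm q}_h-{\bm q})\cdot{\bm n},v_h\rangle_{\partial K}.
\]
The right-hand side is bounded by $C\|{\bm q}-{\bm q}_h\|_{L^\infty(K)}\,h_K^{d-1}\|v_h\|_{L^\infty(\partial K)}$, using inverse and trace estimates for polynomials on the (sub-triangulated) element.

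\textbf{Main obstacle: choosing the test function.} We want $\langle\tau(\mathcal{L}u_h-\widehat{u}_h),v_h\rangle_{\partial K}$ to control $h_K^{d-1}\|\tau(\mathcal{L}u_h-\widehat{u}_h)\|_{L^\infty(\partial K)}$. The trace of $V(K)$ does not in general reach every polynomial on $\partial K$, so instead of a direct test we argue through the structure of the stabilization.

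\emph{HDG-2.} Here $\mathcal{L}=\Pi_{\widehat V}$ and $\tau=h_K^{-1}$. Write
\[
\Pi_{\widehat V}u_h-\widehat{u}_h=\Pi_{\widehat V}(u_h-u)-(\widehat{u}_h-\Pi_{\widehat V}u)
\]
on each face, and express the second term through the jump. Alternatively, bound $\|\mathcal{L}u_h-\widehat{u}_h\|_{L^\infty(F)}$ directly by $h_F\,\times$(right side). We do this via Theorem \ref{mn-jump}, since the face value $\widehat{u}_h$ is tied to the neighboring traces. The cleanest route is the following. Use (\ref{HDG:1}) tested with ${\bm w}_h$ supported in $K$; integrating by parts gives
\[
({\bm q}_h+\nabla u_h,{\bm w}_h)_K=\langle u_h-\widehat{u}_h,{\bm w}_h\cdot{\bm n}\rangle_{\partial K}.
\]
Combine this with Theorem \ref{mn-g}, which bounds ${\bm q}_h+\nabla u_h=({\bm q}_h-{\bm q})-\nabla(u_h-u)$ in $L^\infty$, to obtain control of the normal moments of $u_h-\widehat{u}_h$ of degree $\le k$ on $\partial K$. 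The remaining (non-normal-moment) components are the ones not controlled this way. For these, use (\ref{HDG:3}) and the jump estimate: $u_h^{\pm}-\widehat{u}_h$ on a face $F$ shared by $K^\pm$ differ by $[u_h]=[u_h-u]$, bounded by $Ch_F\times$(right side) by Theorem \ref{mn-jump}. Then (\ref{HDG:3}) says the numerical flux is single-valued, i.e.
\[
\tau^+(\mathcal{L}u_h^+-\widehat{u}_h)+\tau^-(\mathcal{L}u_h^--\widehat{u}_h)=-({\bm q}_h^+\cdot{\bm n}^++{\bm q}_h^-\cdot{\bm n}^-).
\]
The right-hand side is $-[\![({\bm q}_h-{\bm q})\cdot{\bm n}]\!]$, since ${\bm q}\cdot{\bm n}$ is continuous, and is hence bounded by $2\|{\bm q}-{\bm q}_h\|_{L^\infty}$. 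With $\tau^\pm\simeq h^{-1}$, quasi-uniformity, and $\mathcal{L}u_h^+-\mathcal{L}u_h^-=\mathcal{L}[u_h]$ bounded via Theorem \ref{mn-jump} (with the $L^\infty$-stability of $\Pi_{\widehat V}$), the sum and difference of the two one-sided quantities are both controlled. This pins down $\tau^\pm(\mathcal{L}u_h^\pm-\widehat{u}_h)$ on interior faces. On boundary faces we use $\widehat{u}_h=0$ and $[u_h]=u_h$, again bounded by Theorem \ref{mn-jump}.

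\emph{HDG-1.} The same face argument applies on $F_K^\ast$. Only $\tau^+$ may be nonzero there, so single-valuedness gives $\tau(\mathcal{L}u_h-\widehat{u}_h)$ directly in terms of the normal flux jump of ${\bm q}_h$.

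\textbf{Conclusion.} Finally, insert (\ref{mn-g:3}) for $\|{\bm q}-{\bm q}_h\|_{L^\infty}$ and (\ref{mn-jump:2}) for the jumps. The $h_F^{-1}$ weight in the jump estimate is exactly compensated by $\tau\simeq h^{-1}$, which yields (\ref{mn-HDG:1}).
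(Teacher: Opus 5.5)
Your final face argument is exactly the paper's proof. You use single-valuedness of the numerical flux from (\ref{HDG:3}) to bound the sum $\tau^+(\mathcal{L}u_h^+-\widehat u_h)+\tau^-(\mathcal{L}u_h^--\widehat u_h)=-[{\bm q}_h\cdot{\bm n}]$ via (\ref{mn-g:3}), and the difference $\mathcal{L}[u_h]$ via (\ref{mn-jump:2}); the paper does the same, writing it as $(\tau^++\tau^-)(\mathcal{L}u_h^\pm-\widehat u_h)=-[{\bm q}_h\cdot{\bm n}]\pm\tau^\mp\mathcal{L}[u_h]$. The element-local test-function equation and the (\ref{HDG:1}) normal-moment detour are not needed, and in HDG-1 a face can be $F_K^\ast$ for both neighbours (so not only $\tau^+$ is nonzero), but that same sum/difference formula covers this case.
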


\begin{corollary}\label{mn-HDG-1}
We assume (\ref{model_assumptions}) holds.
Let $(u,{\bm q})$ and $({\bm q}_h,u_h,\widehat{u}_h)$ be the solutions of problems (\ref{mixed}) and (\ref{HDG}), then for each $K\in\mathcal{T}_h$ we have
\begin{align}
h_K^{-1}\|u_h-\widehat{u}_h\|_{L^{\infty}(\partial K)}\leq & C\Big(\|{\bm q}-\Pi_W{\bm q}\|_{L^{\infty}(\Omega)}
+\|\nabla(u-\Pi_Vu)\|_{L_h^{\infty}(\Omega)}\Big),\label{mn-HDG-1:1}
\end{align}
for the hybrid mixed DG method and the HDG-1 method, and
\begin{align}
h_K^{-1}\|u_h-\widehat{u}_h\|_{L^{\infty}(\partial K)}\leq & C\Big(\|{\bm q}-\Pi_W{\bm q}\|_{L^{\infty}(\Omega)}
+\|\nabla(u-\Pi_Vu)\|_{L_h^{\infty}(\Omega)}\nonumber\\
&+\|\nabla(u-\Pi_ku)\|_{L_h^{\infty}(\Omega)}\Big),\label{mn-HDG-1:2}
\end{align}
for the HDG-2 method, where the positive constants $C$ are independent of the mesh size $h$.
\end{corollary}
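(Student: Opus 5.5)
We prove Corollary \ref{mn-HDG-1} by combining Theorem \ref{mn-jump}, Theorem \ref{mn-g} and Corollary \ref{mn-HDG} with local arguments on a single element $K$; no new Green's function estimates are needed. The idea is to split $u_h-\widehat{u}_h$ on each face $F\subset\partial K$ through $u$. For an interior face $F=\partial K\cap\partial K'$ we write
\begin{align*}
u_h|_K-\widehat{u}_h = (u_h|_K-u) + (u-\widehat{u}_h).
\end{align*}
The quantity $u-\widehat{u}_h$ is single valued on $F$. The trace of $u_h|_K-u$ on $F$ is not a jump, so it must be handled differently in each case.

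\textbf{Hybrid mixed DG.} Here $\tau=0$, so Corollary \ref{mn-HDG} gives nothing. Instead we use (\ref{HDG:1}) elementwise. Taking ${\bm w}_h$ supported in $K$ and integrating by parts gives
\begin{align*}
({\bm q}_h+\nabla u_h,{\bm w}_h)_K=\langle u_h-\widehat{u}_h,{\bm w}_h\cdot{\bm n}\rangle_{\partial K}.
\end{align*}
For the Raviart--Thomas space, the normal traces ${\bm w}_h\cdot{\bm n}$ span $\mathcal{P}^k(F)$ on every face. Since $u_h-\widehat{u}_h|_F\in\mathcal{P}^k(F)$, we can pick ${\bm w}_h$ through the RT degrees of freedom with ${\bm w}_h\cdot{\bm n}=u_h-\widehat{u}_h$ on $\partial K$ and vanishing interior moments. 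Scaling then gives $\|{\bm w}_h\|_{L^2(K)}\le Ch_K^{1/2}\|u_h-\widehat{u}_h\|_{L^2(\partial K)}$. This yields
\begin{align*}
\|u_h-\widehat{u}_h\|_{L^2(\partial K)}\le Ch_K^{1/2}\|{\bm q}_h+\nabla u_h\|_{L^2(K)}.
\end{align*}
Since ${\bm q}=-\nabla u$, we have ${\bm q}_h+\nabla u_h=({\bm q}_h-{\bm q})-\nabla(u-u_h)$. Passing to $L^\infty$ by inverse inequalities, and using Theorem \ref{mn-g} for both terms, gives (\ref{mn-HDG-1:1}).

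\textbf{HDG-1.} The same identity holds, now with an extra term $\langle\tau(u_h-\widehat{u}_h),\cdot\rangle$ on $F_K^\ast$. On $F_K^\ast$, Corollary \ref{mn-HDG} gives directly $h_K^{-1}\|u_h-\widehat{u}_h\|_{L^\infty(F_K^\ast)}\le C(\cdots)$. On the other faces $\tau=0$. Because $K$ lies on one side of the hyperplane of $F_K^\ast$, one can construct ${\bm w}_h\in(\mathcal{P}^k(K))^d$ with prescribed normal traces on $\partial K\setminus F_K^\ast$. This is the unisolvence behind the projection of \cite{cfq2018}. Testing with such ${\bm w}_h$ controls the remaining faces, again through $\|{\bm q}_h-{\bm q}\|$ and $\|\nabla(u-u_h)\|_{L^\infty(K)}$, after which Theorem \ref{mn-g} applies.

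\textbf{HDG-2.} Here $\tau=h_K^{-1}$ on all of $\partial K$, so Corollary \ref{mn-HDG} controls only $h_K^{-1}\|\Pi_{\widehat V}u_h-\widehat{u}_h\|_{L^\infty(\partial K)}$. The remaining piece is $u_h-\Pi_{\widehat V}u_h=(I-\Pi_{\widehat V})(u_h-\Pi_ku)$ on each face, since $\Pi_ku|_F\in\mathcal{P}^k(F)$ and is therefore reproduced by $\Pi_{\widehat V}$. Using the $L^\infty$-stability of the face projection and the inverse and trace inequalities on polynomials, we get
\begin{align*}
\|(I-\Pi_{\widehat V})(u_h-\Pi_ku)\|_{L^\infty(F)} \le Ch_K\|\nabla(u_h-\Pi_ku)\|_{L^\infty(K)},
\end{align*}
because $(I-\Pi_{\widehat V})$ annihilates constants. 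Splitting with the triangle inequality,
\begin{align*}
\|\nabla(u_h-\Pi_ku)\|_{L^\infty(K)}\le\|\nabla(u-u_h)\|_{L_h^\infty(\Omega)}+\|\nabla(u-\Pi_ku)\|_{L_h^\infty(\Omega)}.
\end{align*}
The first term is bounded by Theorem \ref{mn-g}, and the second is exactly the extra term in (\ref{mn-HDG-1:2}).

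The main obstacle is the HDG-1 case. It requires a careful construction of the local lifting ${\bm w}_h$ with prescribed normal traces on all faces except $F_K^\ast$ on general polyhedra, together with a norm bound $\|{\bm w}_h\|_{L^1(K)}\lesssim h_K^{d-1}\cdot h_K\|\cdot\|_{L^\infty(\partial K)}$ that is uniform in the shape constants. If this fails, I would instead use Theorem \ref{mn-jump} together with Corollary \ref{mn-HDG} on the faces of the neighboring elements.
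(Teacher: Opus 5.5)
Your hybrid mixed DG and HDG-2 arguments are correct, and they follow a different route from the paper. The paper handles all three methods at once. It applies the local discrete $H^1$-inequality of Lemma~\ref{inequality} to the error triple $(\Pi_W{\bm q}-{\bm q}_h,\Pi_Vu-u_h,\Pi_{\widehat V}u-\widehat u_h)$, which satisfies (\ref{HDG:1}). This bounds $h_K^{-1/2}\|u_h-\widehat u_h\|_{L^2(\partial K)}$ by $\|\Pi_W{\bm q}-{\bm q}_h\|_{L^2(K)}+\|\sqrt{\tau}(\mathcal L u_h-\widehat u_h)\|_{L^2(\partial K)}$ plus interpolation errors; the $\Pi_k$ term comes from $\Pi_Vu-\Pi_{\widehat V}u$ on $\partial K$. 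The paper then uses an inverse inequality, Theorem~\ref{mn-g} and Corollary~\ref{mn-HDG}. Your Raviart--Thomas lifting proves the needed local inequality by hand on simplices. Your HDG-2 splitting $u_h-\widehat u_h=(I-\Pi_{\widehat V})(u_h-\Pi_ku)+(\Pi_{\widehat V}u_h-\widehat u_h)$ needs no such inequality at all, which is a real simplification.

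The gap is HDG-1 on general polyhedral meshes.

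\emph{The $\tau$-term.} Equation (\ref{HDG:1}) contains no $\tau$-term, so your identity $({\bm q}_h+\nabla u_h,{\bm w}_h)_K=\langle u_h-\widehat u_h,{\bm w}_h\cdot{\bm n}\rangle_{\partial K}$ is unchanged. The stabilization enters only through Corollary~\ref{mn-HDG} on $F_K^\ast$.

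\emph{The lifting does not exist.} A ${\bm w}_h\in(\mathcal P^k(K))^d$ with prescribed normal traces on $\partial K\setminus F_K^\ast$ need not exist. The unisolvence behind (\ref{proj_HDG-1}) concerns scalar $\mathcal P^{k-1}$-moments plus moments on $F_K^\ast$, and says nothing about this. Take $K=(0,1)^2$, $k=1$, $F_K^\ast$ the bottom edge, $u_h|_K=0$, and $\widehat u_h=y-\frac12$ on both vertical edges, $\widehat u_h=0$ on the horizontal ones. For every ${\bm w}\in(\mathcal P^1(K))^2$,
\[
\langle\widehat u_h,{\bm w}\cdot{\bm n}\rangle_{\partial K}=\int_0^1(y-\tfrac12)\bigl(w_1(1,y)-w_1(0,y)\bigr)\,dy=0,
\]
because $w_1(1,y)-w_1(0,y)$ is constant. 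Hence (\ref{HDG:1}) gives ${\bm q}_h|_K=0$, and $u_h-\widehat u_h=0$ on $F_K^\ast$, yet $u_h-\widehat u_h\neq0$ on the vertical edges. So no argument using only (\ref{HDG:1}) on $K$ and the stabilization on $F_K^\ast$ can bound the other faces, whatever lifting is chosen.

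\emph{The fallback does not close it.} Consider a face that is the distinguished face of neither neighbour; such faces always exist, since there are more faces than elements. Corollary~\ref{mn-HDG} gives nothing there, and Theorem~\ref{mn-jump} controls only $[u_h]$, not $u_h-\widehat u_h$.

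\emph{What the paper does, and what you can salvage.} The paper covers this case by citing Lemma~\ref{inequality}. Be aware that the example above contradicts that lemma as stated: embed it in a square mesh by setting $\widehat u_h=0$ on all other faces and defining ${\bm q}_h$ elementwise from (\ref{HDG:1}). So on non-simplicial meshes neither local route settles HDG-1. On simplicial meshes your construction does work. Let ${\bm e}^1,\dots,{\bm e}^d$ be the basis dual to the normals of the $d$ faces meeting at the vertex opposite $F_K^\ast$. Then ${\bm w}=\sum_i a_i{\bm e}^i$, with $a_i\in\mathcal P^k(K)$ extending the prescribed trace on the $i$-th face, is the required lifting with the correct scaling.
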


\section{Proofs of main results in Section~\ref{sec3}}\label{sec4}

\subsection{Auxiliary results for DG methods for the Poisson equation and general polyhedral meshes}

We introduce some auxiliary results associated with the IPDG method (\ref{IPDG}) and the HDG method (\ref{HDG}),
and prove an auxiliary result for general polyhedral meshes.

\begin{lemma}\label{dfp}
(1) Let $u$ be the solution of problem (\ref{model}), then for any $\widetilde{v}_1,\widetilde{v}_2\in H^2_h(\Omega)$ we have
\begin{align}
a_h(u,v_h)=(f,v_h)_{\mathcal{T}_h},\quad \forall v_h\in V_h^{{\rm DG}}\quad {\rm and}\quad a_h(\widetilde{v}_1,\widetilde{v}_2)=a_h(\widetilde{v}_2,\widetilde{v}_1).
\end{align}

(2) Let $({\bm q},u)$ be the solution of problem (\ref{mixed}), then for any $({\bm w}_1,v_1,\widehat{v}_1), ({\bm w}_2,v_2,\widehat{v}_2)\in H_h^1(\Omega)\times H_h^1(\Omega)\times L^2(\mathcal{E}_h)$ we have
\begin{align}
\mathcal{B}({\bm q},u,u;{\bm w}_h,v_h,\widehat{v}_h)=-(f,v_h),\quad \forall ({\bm w}_h,v_h,\widehat{v}_h)\in {\bm W}_h\times V_h\times \widehat{V}_h,
\end{align}
and
\begin{align}
\mathcal{B}({\bm w}_1,v_1,\widehat{v}_1;{\bm w}_2,v_2,\widehat{v}_2)=\mathcal{B}({\bm w}_2,v_2,\widehat{v}_2;{\bm w}_1,v_1,\widehat{v}_1).
\end{align}
\end{lemma}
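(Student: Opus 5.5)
The plan is to verify the two parts separately. Each is a consistency (Galerkin orthogonality) statement plus a symmetry statement. The consistency claims use the regularity of the exact solution: $u\in H^2(\Omega)\cap H_0^1(\Omega)$ and ${\bm q}=-\nabla u\in H^1(\Omega)^d$. The symmetry claims are purely algebraic inspections of the bilinear forms.

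For part (1), I would first note that since $u\in H^2(\Omega)\cap H_0^1(\Omega)$, its traces are single-valued on interior faces, so $[u]=0$ on every $F\in\mathcal{E}_h^o$. Also $[u]=u=0$ on every $F\in\mathcal{E}_h^\partial$. Hence both the term $\{\nabla v_h\cdot{\bm n}\}[u]$ and the penalty term vanish. The normal derivative $\nabla u\cdot{\bm n}$ is continuous across interior faces in the trace sense, so $\{\nabla u\cdot{\bm n}\}=\nabla u\cdot{\bm n}^+$ there. Next I integrate by parts elementwise in $(\nabla u,\nabla v_h)_{\mathcal{T}_h}$ to get $(-\Delta u,v_h)_{\mathcal{T}_h}+\sum_K\langle\nabla u\cdot{\bm n},v_h\rangle_{\partial K}$. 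Regrouping the boundary sum face by face gives exactly $\sum_{F}\int_F\{\nabla u\cdot{\bm n}\}[v_h]$, and this cancels the remaining consistency term. Using $-\Delta u=f$ then yields $a_h(u,v_h)=(f,v_h)_{\mathcal{T}_h}$. Symmetry of $a_h$ on $H^2_h(\Omega)$ is immediate, since each of the three groups of terms is symmetric in its two arguments; the face traces are well defined for $H^2_h$ functions.

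For part (2), I would substitute $({\bm q},u,u|_{\mathcal{E}_h})$ into $\mathcal{B}$. The first three terms give $({\bm q},{\bm w}_h)-(u,\nabla\cdot{\bm w}_h)_{\mathcal{T}_h}+\langle u,{\bm w}_h\cdot{\bm n}\rangle_{\partial\mathcal{T}_h}$. Elementwise integration by parts turns this into $({\bm q}+\nabla u,{\bm w}_h)=0$. The term $-(\nabla\cdot{\bm q},v_h)_{\mathcal{T}_h}$ equals $-(f,v_h)$. The stabilization term vanishes because $\mathcal{L}u-u=0$ on $\partial K$. For $\mathcal{L}=\mathcal{I}$ this is trivial. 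For $\mathcal{L}=\Pi_{\widehat V}$ it is not, but the term is paired with $\tau$ constant on faces and tested against $\mathcal{L}v_h-\widehat v_h\in\mathcal{P}^k(F)$, so by orthogonality it reduces to $\langle\tau(u-u),\cdot\rangle=0$. More precisely, I would interpret $\mathcal{L}u$ on the exact trace with $\widehat u=u$, which gives $\mathcal{L}u-\Pi_{\widehat V}u=0$ in the tested sense. Finally, $\langle{\bm q}\cdot{\bm n},\widehat v_h\rangle_{\partial\mathcal{T}_h}=0$: the normal component of ${\bm q}\in H(\mathrm{div})$ is single-valued, so the contributions from the two sides of each interior face cancel, and $\widehat v_h=0$ on $\mathcal{E}_h^\partial$.

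Symmetry of $\mathcal{B}$ is the step requiring care, and the main obstacle is bookkeeping rather than analysis. The terms $({\bm w}_1,{\bm w}_2)$ and the stabilization term are visibly symmetric. The remaining four terms I would pair off:
\[
-(v_1,\nabla\cdot{\bm w}_2)_{\mathcal{T}_h}\leftrightarrow-(\nabla\cdot{\bm w}_1,v_2)_{\mathcal{T}_h},
\qquad
\langle\widehat v_1,{\bm w}_2\cdot{\bm n}\rangle_{\partial\mathcal{T}_h}\leftrightarrow\langle{\bm w}_1\cdot{\bm n},\widehat v_2\rangle_{\partial\mathcal{T}_h}.
\]
Swapping indices $1\leftrightarrow2$ maps each group onto its partner, so $\mathcal{B}$ is symmetric without any integration by parts. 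Throughout, the broken $H^1_h$ regularity guarantees that all traces are in $L^2(\partial K)$.
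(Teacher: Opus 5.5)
Your proposal is correct and follows essentially the paper's route: the paper's proof just invokes the definitions of $a_h$, $\mathcal{B}$, $\mathcal{L}$ and elementwise integration by parts, and you have filled in those details faithfully. That includes the observation that the symmetry of $\mathcal{B}$ follows from pairing terms, with no integration by parts needed. The one step worth stating more cleanly is the HDG-2 stabilization term: it vanishes because $\tau=h_K^{-1}$ is constant on $\partial K$ and $\langle \Pi_{\widehat{V}}u-u,\Pi_{\widehat{V}}v_h-\widehat{v}_h\rangle_F=0$, since $\Pi_{\widehat{V}}v_h-\widehat{v}_h\in\mathcal{P}^k(F)$; your phrase ``$\mathcal{L}u-\Pi_{\widehat V}u=0$ in the tested sense'' should instead refer to $\mathcal{L}u-\widehat{u}=\Pi_{\widehat{V}}u-u$.
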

\begin{proof}
This lemma can be proved by the definitions of $\mathcal{B}$, $\mathcal{L}$ and $a_h$ and integration by parts.
\end{proof}

\begin{lemma}\label{inequality}
Let $({\bm q}_h,u_h,\widehat{u}_h)\in {\bm W}_h\times V_h\times \widehat{V}_h$ satisfy (\ref{HDG:1}), then we have
\begin{align}
\|\nabla u_h\|^2_{L^2(K)}+h_K^{-1}\|u_h-\widehat{u}_h\|_{L^2(\partial K)}^2\leq C\left(\|{\bm q}_h\|^2_{L^2(K)}+\|\sqrt{\tau}(\mathcal{L}u_h-\widehat{u}_h)\|
^2_{L^2(\partial K)}\right),
\end{align}
for each $K\in\mathcal{T}_h$.
\end{lemma}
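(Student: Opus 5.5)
The proof will be local, on a single element $K$, and will use only equation (\ref{HDG:1}) tested with functions supported in $K$. Since $V(K)\subset\mathcal{P}^{k+1}(K)$, we have $\nabla u_h|_K\in(\mathcal{P}^k(K))^d\subset{\bm W}(K)$ in all three cases. We may therefore take ${\bm w}_h=\nabla u_h$ on $K$ and zero elsewhere. Integrating by parts on $K$, (\ref{HDG:1}) becomes
\begin{align*}
({\bm q}_h,{\bm w}_h)_K+(\nabla u_h,{\bm w}_h)_K-\langle u_h-\widehat{u}_h,{\bm w}_h\cdot{\bm n}\rangle_{\partial K}=0,
\end{align*}
and this identity holds for every ${\bm w}_h\in{\bm W}(K)$. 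The plan is to use it twice: once to control $\nabla u_h$, and once to control $u_h-\widehat u_h$ on $\partial K$.

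\emph{Gradient bound.} With ${\bm w}_h=\nabla u_h$ we get $\|\nabla u_h\|_{L^2(K)}^2\le \|{\bm q}_h\|_{L^2(K)}\|\nabla u_h\|_{L^2(K)}+|\langle u_h-\widehat u_h,\nabla u_h\cdot{\bm n}\rangle_{\partial K}|$. The boundary term is the difficulty, since it involves $u_h-\widehat u_h$ itself rather than $\mathcal{L}u_h-\widehat u_h$. Write $u_h-\widehat u_h=(\mathcal{L}u_h-\widehat u_h)+(u_h-\mathcal{L}u_h)$. The first part is bounded by $\|\sqrt\tau(\mathcal{L}u_h-\widehat u_h)\|_{L^2(\partial K)}\,h_K^{1/2}\|\nabla u_h\cdot{\bm n}\|_{L^2(\partial K)}$ on faces where $\tau=h_K^{-1}$, combined with the inverse trace inequality $h_K^{1/2}\|\nabla u_h\|_{L^2(\partial K)}\le C\|\nabla u_h\|_{L^2(K)}$. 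For HDG-2, $\mathcal{L}=\Pi_{\widehat V}$ and $\nabla u_h\cdot{\bm n}|_F\in\mathcal{P}^k(F)$, so $\langle u_h-\Pi_{\widehat V}u_h,\nabla u_h\cdot{\bm n}\rangle_F=0$ and the second part vanishes. For the hybrid mixed DG method and HDG-1, $\mathcal{L}=\mathcal{I}$, so the second part is zero, but $\tau$ vanishes on some or all faces. On those faces the trace term cannot be absorbed directly, so we first prove the jump bound below and then use $|\langle u_h-\widehat u_h,\nabla u_h\cdot{\bm n}\rangle_{\partial K}|\le h_K^{-1/2}\|u_h-\widehat u_h\|_{L^2(\partial K)}\,C\|\nabla u_h\|_{L^2(K)}$.

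\emph{Jump bound: the main step.} We need to show that $u_h-\widehat u_h$ on $\partial K$ is controlled, through test functions ${\bm w}_h\cdot{\bm n}$, by ${\bm q}_h$, $\nabla u_h$ and the stabilized part. The cases differ as follows.
\begin{itemize}
\item For the hybrid mixed DG method (Raviart--Thomas on simplices), the normal traces of $RT_k(K)$ span $\prod_F\mathcal{P}^k(F)$. We can therefore pick ${\bm w}_h$ with ${\bm w}_h\cdot{\bm n}=h_K^{-1}(u_h-\widehat u_h)$ on $\partial K$ and, by scaling, $\|{\bm w}_h\|_{L^2(K)}\le Ch_K^{-1/2}\|u_h-\widehat u_h\|_{L^2(\partial K)}$. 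The identity then gives $h_K^{-1}\|u_h-\widehat u_h\|^2_{L^2(\partial K)}\le C(\|{\bm q}_h\|_{L^2(K)}+\|\nabla u_h\|_{L^2(K)})h_K^{-1/2}\|u_h-\widehat u_h\|_{L^2(\partial K)}$.
\item For HDG-1, $(\mathcal{P}^k)^d$ cannot prescribe normal traces on all faces. Here we use the known HDG-1 lifting from \cite{cfq2018}: one can choose ${\bm w}_h\in(\mathcal{P}^k(K))^d$ with ${\bm w}_h\cdot{\bm n}=h_K^{-1}(u_h-\widehat u_h)$ on $\partial K\setminus F^\ast_K$, with the trace on $F_K^\ast$ and the $L^2(K)$ norm controlled by scaling. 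This uses that $K$ lies on one side of the hyperplane of $F_K^\ast$, via the weight given by the distance to that hyperplane. The leftover term on $F_K^\ast$ is then absorbed by the $\tau=h_K^{-1}$ term there.
\item For HDG-2, $\tau=h_K^{-1}$ on all of $\partial K$. The piece $\mathcal{L}u_h-\widehat u_h$ is controlled directly, and $u_h-\Pi_{\widehat V}u_h$ satisfies $h_K^{-1/2}\|u_h-\Pi_{\widehat V}u_h\|_{L^2(F)}\le C\|\nabla u_h\|_{L^2(K)}$ by a scaled Poincaré-type argument on $F$.
\end{itemize}

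\emph{Combining.} Young's inequality then gives the jump term bounded by $\|{\bm q}_h\|^2_{L^2(K)}+\|\nabla u_h\|^2_{L^2(K)}+\|\sqrt\tau(\mathcal{L}u_h-\widehat u_h)\|^2_{L^2(\partial K)}$. We substitute this into the gradient estimate and close by absorption. For polyhedral elements, all inverse and trace inequalities are applied on the shape-regular sub-triangulation, with the number of sub-elements bounded by $\mathcal{N}_d$. The HDG-1 lifting with leftover absorption is the step I expect to be the main obstacle, because $\tau$ vanishes on most faces. If circular dependence between the two bounds arises there, I would instead prove the result by a scaling/compactness argument on the reference configuration. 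The key fact for that argument is that the right-hand side vanishing forces ${\bm q}_h=0$, $\mathcal{L}u_h=\widehat u_h$ on the stabilized faces, and hence $u_h$ constant and equal to $\widehat u_h$.
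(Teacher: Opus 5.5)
\textbf{Where the proposal breaks.} Your HDG-2 argument is correct, also on polyhedral elements. The orthogonality $\langle u_h-\Pi_{\widehat V}u_h,\nabla u_h\cdot{\bm n}\rangle_F=0$ lets the gradient bound close by itself, and the jump bound then follows. The paper gives no argument and simply cites the three source papers.

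For the hybrid mixed DG method and HDG-1, however, your closing step fails. Your jump bound has the form $h_K^{-1/2}\|u_h-\widehat u_h\|_{L^2(\partial K)}\le C_1(\|{\bm q}_h\|_{L^2(K)}+\|\nabla u_h\|_{L^2(K)}+\cdots)$. On faces with $\tau=0$, your gradient bound has the form $\|\nabla u_h\|_{L^2(K)}\le\|{\bm q}_h\|_{L^2(K)}+C_2h_K^{-1/2}\|u_h-\widehat u_h\|_{L^2(\partial K)}$. Substituting one into the other puts $C_1C_2\|\nabla u_h\|_{L^2(K)}$ on the right. Here $C_1C_2$ is a product of fixed trace and lifting constants with no small parameter, so Young's inequality cannot push the coefficient below one and nothing can be absorbed.

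The missing idea is to decouple the two quantities through the interior degrees of freedom. Choose the lifting ${\bm w}_h$ with the prescribed normal trace \emph{and vanishing moments against} $(\mathcal{P}^{k-1}(K))^d$. For these two methods $\nabla u_h\in(\mathcal{P}^{k-1}(K))^d$, so $(\nabla u_h,{\bm w}_h)_K=0$. The identity then gives $h_K^{-1/2}\|u_h-\widehat u_h\|_{L^2(\partial K)}\le C(\|{\bm q}_h\|_{L^2(K)}+\|\sqrt{\tau}(u_h-\widehat u_h)\|_{L^2(\partial K)})$ with no $\nabla u_h$ on the right, and your gradient estimate closes. For $RT_k$ such a lifting exists by unisolvence of the Raviart--Thomas degrees of freedom. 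For HDG-1 on a simplex it is the single-face projection whose degrees of freedom are the moments against $(\mathcal{P}^{k-1}(K))^d$ and the normal moments against $\mathcal{P}^k(F)$ on the faces $F\neq F_K^\ast$. The dimensions match exactly, and unisolvence is elementary.

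\textbf{HDG-1 on general polyhedral elements.} Here the lifting you attribute to the literature does not exist. Weighting by the distance to the hyperplane of $F_K^\ast$ cannot help, because the obstruction is a dimension count. Take $d=2$, $k=1$ and an interior convex pentagon $K$. Prescribing linear normal traces on the four edges other than $F_K^\ast$ imposes $8$ conditions on the $6$-dimensional space $(\mathcal{P}^1(K))^2$. The same count shows that the map $\mu\mapsto\langle\mu,{\bm w}\cdot{\bm n}\rangle_{\partial K\setminus F_K^\ast}$, ${\bm w}\in(\mathcal{P}^1(K))^2$, has a nontrivial kernel. Take $u_h=0$, and let $\widehat u_h=\mu\neq0$ from that kernel on $\partial K\setminus F_K^\ast$ and zero on all other faces. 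Then (\ref{HDG:1}) forces ${\bm q}_h|_K=0$, so the right-hand side of the lemma vanishes while $h_K^{-1}\|u_h-\widehat u_h\|^2_{L^2(\partial K)}>0$.

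Consequently your compactness fallback also fails: its step ``hence $u_h$ is constant and equal to $\widehat u_h$'' is false there. In fact, with the spaces as stated, the HDG-1 case of the lemma itself fails on such elements, so no argument based only on (\ref{HDG:1}) can prove it. Once repaired as above, your argument proves the lemma for the hybrid mixed DG method, for HDG-2 on polyhedral meshes, and for HDG-1 only on simplices.
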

\begin{proof}
The result follows from \cite[Theorem 2.3]{cfq2018}, \cite[Lemma 3.2]{qs2016}, and \cite[Lemma 3.1]{es2010} for HDG-1, HDG-2, and hybrid mixed DG methods, respectively.
\end{proof}

\begin{lemma}[Local energy error estimates]\label{lee}
Let $\Omega_0\subset\Omega_1\subset\Omega$ satisfy $\rho={\rm dist}(\partial\Omega_0,\partial\Omega_1\backslash\partial\Omega)\geq rh$ for $r>1$ sufficiently large.

(1) If $u$ and $u_h^{{\rm DG}}$ are the solutions of problems (\ref{model}) and (\ref{IPDG}), then we have
\begin{align}
&\left(\sum_{F\in\mathcal{E}_h}\left(h_F^{-1}\|[u-u_h^{{\rm DG}}]\|^2_{L^2(F\cap\Omega_0)}+h_F\|\{\nabla(u-u_h^{{\rm DG}})\cdot{\bm n}\}\|^2_{L^2(F\cap\Omega_0)}\right)\right)^{1/2}\nonumber\\
&+\|\nabla(u-u_h^{{\rm DG}})\|_{L_h^2(\Omega_0)}\leq C\Big(h^{-1}\|u-\Pi_ku\|_{L^2(\Omega_1)}+\|\nabla(u-\Pi_ku)\|_{L_h^2(\Omega_1)}\\
&\quad\quad\quad+h|u-\Pi_ku|_{H_h^2(\Omega_1)}\Big)+C\rho^{-1}\|u-u_h^{{\rm DG}}\|_{L^2(\Omega_1)},\nonumber
\end{align}
where $\Pi_k$ is defined in the beginning of Subsection \ref{sec3:2}.

(2) If $({\bm q},u)$ and $({\bm q}_h,u_h,\widehat{u}_h)$ are the solutions of problems (\ref{mixed}) and (\ref{HDG}), then we have
\begin{align}
&\|{\bm q}-{\bm q}_h\|_{L^2(\Omega_0)}+\|\nabla(u-u_h)\|_{L_h^2(\Omega_0)}+\left(\sum_{K\in\mathcal{T}_h}\|\sqrt{\tau}(\mathcal{L}u_h-\widehat{u}_h)\|^2_{L^2(\partial K\cap\Omega_0)}\right)^{1/2}\nonumber\\
\leq& Ch\rho^{-1}\left(\|{\bm q}-{\bm q}_h\|_{L^2(\Omega_1)}+\left(\sum_{K\in\mathcal{T}_h}\|\sqrt{\tau}(\mathcal{L}u_h-\widehat{u}_h)\|^2_{L^2(\partial K\cap\Omega_1)}\right)^{1/2}\right)\\
&+C\rho^{-1}\|u-u_h\|_{L^2(\Omega_1)}+C\Big(h^{-1}\|u-\Pi_Vu\|_{L^2(\Omega_1)}+\|\nabla(u-\Pi_Vu)\|_{L_h^2(\Omega_1)}\nonumber\\
&+\|{\bm q}-\Pi_W{\bm q}\|_{L^2(\Omega_1)}+h\|\nabla({\bm q}-\Pi_W{\bm q})\|_{L_h^2(\Omega_1)}\Big),\nonumber
\end{align}
where $\Pi_V$ and $\Pi_W$ are defined in the beginning of Subsection \ref{sec3:2}.
\end{lemma}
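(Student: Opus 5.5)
The argument is the standard cut-off function technique for local energy estimates (in the spirit of Schatz--Wahlbin and of \cite{cc2005,lc2026}), carried out separately for the IPDG form $a_h$ and the HDG form $\mathcal{B}$. The key inputs are Galerkin orthogonality from Lemma \ref{dfp}, superapproximation of the $L^2$ projections, and, for part (2), Lemma \ref{inequality}.

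\textbf{Setup.} Introduce an intermediate domain $\Omega_{1/2}$ with $\Omega_0\subset\Omega_{1/2}\subset\Omega_1$, each separation of size $\rho/2$. Fix a smooth cut-off $\omega$ with $\omega\equiv1$ on $\Omega_0$, $\operatorname{supp}\omega\subset\Omega_{1/2}$ (up to $\partial\Omega$), and $|\nabla^j\omega|\le C\rho^{-j}$. Split the error as
\[
u-u_h^{\rm DG}=(u-\Pi_ku)+(\Pi_ku-u_h^{\rm DG})=:\eta+\xi .
\]
The $\eta$ contributions are controlled directly in the energy norm by trace inequalities, which produce $h^{-1}\|\eta\|+\|\nabla\eta\|_h+h|\eta|_{H^2_h}$ on $\Omega_1$. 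It therefore remains to bound the localized discrete error $\xi$.

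\textbf{Part (1), IPDG.}
1. By coercivity of $a_h$ (taking $\lambda$ large) applied to $\omega\xi$, and then commuting $\omega$ through the form, obtain
\[
\|\omega\xi\|_{\rm DG}^2\lesssim a_h(\xi,\omega^2\xi)+\text{commutator terms}.
\]
The commutator terms involve $\nabla\omega$ and are bounded by $C\rho^{-1}\|\xi\|_{L^2(\Omega_{1/2})}\|\omega\xi\|_{\rm DG}$ plus lower-order pieces.
2. Write $a_h(\xi,\omega^2\xi)=a_h(\xi,\omega^2\xi-\Pi_k(\omega^2\xi))-a_h(\eta,\Pi_k(\omega^2\xi))$, using Galerkin orthogonality $a_h(u-u_h^{\rm DG},v_h)=0$ from Lemma \ref{dfp}.
3. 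Superapproximation gives
\[
\|\omega^2\xi-\Pi_k(\omega^2\xi)\|_{\rm DG,K}\le Ch\rho^{-1}\|\xi\|_{H^1_h(K)}+C\rho^{-1}\|\xi\|_{L^2(K)}
\]
on the support of $\omega$. Combined with an inverse estimate $\|\nabla\xi\|\le Ch^{-1}\|\xi\|$, the term $Ch\rho^{-1}$ times the energy of $\xi$ on the larger set reduces to $C\rho^{-1}\|\xi\|_{L^2(\Omega_1)}$.
4. Kick back, then apply the triangle inequality $\|\xi\|_{L^2}\le\|u-u_h^{\rm DG}\|_{L^2}+\|\eta\|_{L^2}$ and $\rho^{-1}\le h^{-1}/r$. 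This controls the jump and flux terms $h_F^{-1}\|[\cdot]\|^2$ and $h_F\|\{\cdot\}\|^2$ on $\Omega_0$, using a discrete trace inequality for $\xi$ and a continuous one for $\eta$.

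\textbf{Part (2), HDG.} Use the symmetric form $\mathcal{B}$ and test with $(\Pi_W(\omega^2\bm e_q),\Pi_V(\omega^2 e_u),\Pi_{\widehat V}(\omega^2\widehat e))$, where the $e$'s denote the projected discrete errors.
1. The diagonal terms $(\bm e_q,\omega^2\bm e_q)$ and $\langle\tau(\mathcal{L}e_u-\widehat e),\omega^2(\cdot)\rangle$ give the left-hand side. Lemma \ref{inequality}, applied elementwise, then converts $\|\omega\bm e_q\|$ and the stabilization into control of $\|\omega\nabla e_u\|$ and $h^{-1/2}\|e_u-\widehat e\|_{\partial K}$.
2. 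The cross terms $-(e_u,\nabla\cdot\bm w)+\langle\widehat e,\bm w\cdot\bm n\rangle-(\nabla\cdot\bm e_q,v)+\langle\bm e_q\cdot\bm n,\widehat v\rangle$ cancel up to commutators with $\omega^2$. Each commutator carries either $\nabla\omega$, giving the factor $\rho^{-1}\|e_u\|$, or a superapproximation defect of size $h\rho^{-1}$ times the $\bm q$-error and stabilization on $\Omega_1$. The latter is exactly the $Ch\rho^{-1}(\cdots)$ term in the statement, which is why it is not absorbed.

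\textbf{Main obstacle.} The delicate point is the commutator terms on element boundaries, namely $\langle\widehat e,\omega^2\bm e_q\cdot\bm n\rangle$ against $(e_u,\nabla\cdot(\omega^2\bm e_q))$, together with the $\mathcal{L}=\Pi_{\widehat V}$ mismatch in HDG-2. I would handle these by writing $\omega^2=\overline{\omega^2}_K+(\omega^2-\overline{\omega^2}_K)$ elementwise, with $\overline{\omega^2}_K$ the mean of $\omega^2$ on $K$. The constant part cancels exactly by the local equations (\ref{HDG:1})--(\ref{HDG:3}). The oscillation part is $O(h\rho^{-1})$ in $L^\infty(K)$, which yields the stated bounds after trace and inverse inequalities. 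On general polyhedra, these inequalities follow from the assumed shape-regular sub-triangulation.
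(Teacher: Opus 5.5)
\textbf{There is a genuine gap in the step where the argument has to close.} The paper gives no proof of its own; it quotes \cite[Lemma 4.1]{cc2005} and \cite[Lemma 3.4]{clq2026}. Your outline (cut-off $\omega$, Galerkin orthogonality from Lemma \ref{dfp}, superapproximation, kick-back) is the standard route for such estimates.

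\textbf{Where Part (1) fails.} Continuity of $a_h$ gives $|a_h(\xi,\psi)|\le C\|\xi\|_{{\rm DG},S}\|\psi\|_{\rm DG}$ for $\psi=\omega^2\xi-\Pi_k(\omega^2\xi)$. Here $S$ is the whole support of $\omega$ plus one layer of elements, a set on which $\omega\not\equiv1$. Your superapproximation bound plus the inverse estimate gives $\|\psi\|_{\rm DG}\le C\rho^{-1}\|\xi\|_{L^2(S)}$, so you are left with $C\rho^{-1}\|\xi\|_{{\rm DG},S}\|\xi\|_{L^2(S)}$.
\begin{itemize}
\item This term cannot be kicked back into $\|\omega\xi\|_{\rm DG}^2$.
\item Young's inequality only trades it for $\epsilon\|\xi\|^2_{{\rm DG},S}$, which lives on the same larger set.
\item Applying the inverse estimate to the first factor as well yields only $\|\omega\xi\|_{\rm DG}\le C(h\rho)^{-1/2}\|\xi\|_{L^2}$. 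This is worse than the claimed $C\rho^{-1}\|\xi\|_{L^2}$ by a factor $(\rho/h)^{1/2}$.
\end{itemize}
This loss matters: in Lemma \ref{rG23-L1} the estimate is used with $\rho\sim d_j$, so the dyadic sums would pick up factors $(d_j/h)^{1/2}$. The root cause is that your bound $\ldots+C\rho^{-1}\|\xi\|_{L^2(K)}$ throws away the factor $h/\rho$ that every piece of the defect actually carries. The energy on the larger set must appear multiplied by a full power of $h/\rho$ before an inverse estimate can turn it into $\rho^{-1}\|\xi\|_{L^2}$.

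\textbf{How to repair Part (1).} Use the sharp bound $\|\psi\|_{{\rm DG},K}\le C(h\rho^{-1}\|\nabla\xi\|_{L^2(K)}+h\rho^{-2}\|\xi\|_{L^2(K)})$. It gives $\|\omega\xi\|^2_{\rm DG}\le Ch\rho^{-1}\|\xi\|^2_{{\rm DG},S}+C\rho^{-2}\|\xi\|^2_{L^2(S)}+{}$(approximation terms). That is only a factor $(h/\rho)^{1/2}$ in front of the larger-set energy. Apply the same inequality once more on a nested domain between $S$ and $\Omega_1$ to upgrade this to $h/\rho$. Only then use $\|\xi\|_{\rm DG}\le Ch^{-1}\|\xi\|_{L^2}$. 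Alternatively, a weighted superapproximation with $\|\nabla(\omega\xi)\|$ in place of $\|\nabla\xi\|$, combined with Young's inequality, gives the linear factor in one step.

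\textbf{Part (2).} The same issue arises, and your remark that the defect term ``is exactly the $Ch\rho^{-1}(\cdots)$ term'' skips it.
\begin{itemize}
\item A defect of size $h\rho^{-1}$ times the unweighted $\bm q$-energy, paired with that energy, produces only $(h/\rho)^{1/2}$, not the linear $h\rho^{-1}$ in the statement.
\item To get the linear factor, the defect must be bounded by $h\rho^{-1}$ times $\omega$-weighted quantities, which Young's inequality absorbs, plus $(h/\rho)^2$ times unweighted ones.
\item Every $\nabla\omega$-commutator producing $\rho^{-1}\|e_u\|$ must be paired with a weighted quantity. The exact identity $-(e_u,\nabla\cdot(\omega^2\bm e_q))_{\mathcal{T}_h}+(\nabla\cdot\bm e_q,\omega^2e_u)_{\mathcal{T}_h}=-2(e_u\nabla\omega,\omega\bm e_q)_{\mathcal{T}_h}$ shows this pairing.
\end{itemize}
In HDG there is no inverse estimate $\|\bm e_q\|\le Ch^{-1}\|e_u\|$ to rescue a pairing of $\rho^{-1}\|e_u\|$ with the unweighted energy on $S$. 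That is precisely why $\|\bm q-\bm q_h\|_{L^2(\Omega_1)}$ survives on the right-hand side. Your elementwise-mean splitting of $\omega^2$ can be arranged to respect this pairing, by using $\nabla(\omega^2)=2\omega\nabla\omega$ so that every such term carries a factor $\omega$, but your sketch does not show it.
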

\begin{proof}
The estimates in (1) and (2) follow from \cite[Lemma 4.1]{cc2005} and \cite[Lemma 3.4]{clq2026} respectively.
\end{proof}

\begin{lemma}[Energy error estimates]\label{leee}
(1) Let $u$ and $u_h^{{\rm DG}}$ be the solutions of problems (\ref{model}) and (\ref{IPDG}), then if the stabilization parameter $\lambda$ is sufficiently large, we have
 \begin{align}
 &\|\nabla(u-u_h^{{\rm DG}})\|_{L_h^2(\Omega)}+\Big(\sum_{F\in\mathcal{E}_h}h_F^{-1}\|[u-u_h^{{\rm DG}}]\|^2_{L^2(F)}\Big)^{1/2}\nonumber\\
 \leq&C\|\nabla(u-\Pi_ku)\|_{L_h^2(\Omega)}+C\Big(\sum_{F\in\mathcal{E}_h}\Big(h_F^{-1}\|[u-\Pi_ku]\|^2_{L^2(F)}\\
 &+h_F\|\{\nabla(u-\Pi_ku)\cdot{\bm n}\}\|^2_{L^2(F)}\Big)\Big)^{1/2}.\nonumber
 \end{align}

(2) Let $({\bm q},u)$ and $({\bm q}_h,u_h,\widehat{u}_h)$ be the solutions of problems (\ref{mixed}) and (\ref{HDG}), then we have
\begin{align}
&\|{\bm q}-{\bm q}_h\|_{L^2(\Omega)}+\|\nabla(u-u_h)\|_{L^2_h(\Omega)}+\Big(\sum_{K\in\mathcal{T}_h}\|\sqrt{\tau}(\mathcal{L}u_h-\widehat{u}_h)\|^2_{L^2(\partial K)}\Big)^{1/2}\nonumber\\
\leq&C\Big(h^{-1}\|u-\Pi_Vu\|_{L^2(\Omega)}+\|\nabla(u-\Pi_Vu)\|_{L_h^2(\Omega)}\\
&+\|{\bm q}-\Pi_W{\bm q}\|_{L^2(\Omega)}+h\|\nabla({\bm q}-\Pi_W{\bm q})\|
_{L_h^2(\Omega)}\Big).\nonumber
\end{align}
\end{lemma}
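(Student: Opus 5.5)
The statement is the global energy estimate of Lemma~\ref{leee}. It is a standard consequence of consistency (Lemma~\ref{dfp}), coercivity/stability, and the boundedness of the bilinear forms. In both cases the plan is to split the error through the projections and use Galerkin orthogonality to reduce everything to bounds on the projection error.

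For part (1) we write $u-u_h^{\rm DG}=(u-\Pi_ku)+(\Pi_ku-u_h^{\rm DG})=:\eta+\xi_h$, with $\xi_h\in V_h^{\rm DG}$. We use the DG energy norm
\[
\|v\|_{\rm DG}^2=\|\nabla v\|^2_{L_h^2(\Omega)}+\sum_{F}h_F^{-1}\|[v]\|_{L^2(F)}^2 .
\]
The discrete trace and inverse inequalities on each element give $h_F\|\{\nabla v_h\cdot\bm n\}\|^2_{L^2(F)}\le C\|\nabla v_h\|^2_{L_h^2}$; on polyhedral meshes we apply them on the shape-regular sub-triangulation, with $\mathcal N_d$ controlling the constants. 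Hence, for $\lambda$ sufficiently large, $a_h(v_h,v_h)\ge c\|v_h\|_{\rm DG}^2$ on $V_h^{\rm DG}$.

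Lemma~\ref{dfp}(1) gives Galerkin orthogonality $a_h(u-u_h^{\rm DG},v_h)=0$. Therefore
\[
c\|\xi_h\|_{\rm DG}^2\le a_h(\xi_h,\xi_h)=-a_h(\eta,\xi_h).
\]
We bound $a_h(\eta,\xi_h)$ term by term with Cauchy--Schwarz:
\begin{itemize}
\item the volume term by $\|\nabla\eta\|_{L_h^2}\|\nabla\xi_h\|_{L_h^2}$;
\item the term $\{\nabla\eta\cdot\bm n\}[\xi_h]$ by $(\sum h_F\|\{\nabla\eta\cdot\bm n\}\|^2)^{1/2}\|\xi_h\|_{\rm DG}$;
\item the term $\{\nabla\xi_h\cdot\bm n\}[\eta]$ by $(\sum h_F^{-1}\|[\eta]\|^2)^{1/2}$ times $(\sum h_F\|\{\nabla\xi_h\cdot\bm n\}\|^2)^{1/2}\le C\|\xi_h\|_{\rm DG}$, using the discrete trace inequality;
\item the penalty term similarly.
\end{itemize}
This gives $\|\xi_h\|_{\rm DG}$ bounded by the right-hand side of (1). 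The triangle inequality, with $\|\eta\|_{\rm DG}$ already appearing on that right-hand side, concludes part (1).

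For part (2) we follow the usual HDG projection-based argument. Set $\bm\varepsilon_q=\Pi_W\bm q-\bm q_h$, $\varepsilon_u=\Pi_Vu-u_h$ and $\varepsilon_{\hat u}=\Pi_{\widehat V}u-\widehat u_h$. Lemma~\ref{dfp}(2) gives the error equation
\[
\mathcal B(\bm\varepsilon_q,\varepsilon_u,\varepsilon_{\hat u};\bm w_h,v_h,\widehat v_h)=\mathcal B(\Pi_W\bm q-\bm q,\Pi_Vu-u,\Pi_{\widehat V}u-u;\bm w_h,v_h,\widehat v_h).
\]
Testing with $(\bm\varepsilon_q,-\varepsilon_u,-\varepsilon_{\hat u})$ cancels the skew terms. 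The left-hand side becomes $\|\bm\varepsilon_q\|^2+\|\sqrt\tau(\mathcal L\varepsilon_u-\varepsilon_{\hat u})\|^2_{\partial\mathcal T_h}$; for HDG-2 we note $\mathcal L\Pi_{\widehat V}u=\Pi_{\widehat V}u$ on faces.

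On the right-hand side we integrate by parts elementwise and use the orthogonality of the $L^2$ projections, so that terms like $(\Pi_Vu-u,\nabla\cdot\bm\varepsilon_q)_K$ vanish when $\nabla\cdot\bm W(K)\subset V(K)$. The remaining face terms, such as $\langle(\Pi_W\bm q-\bm q)\cdot\bm n,\varepsilon_u-\varepsilon_{\hat u}\rangle$ and $\langle\Pi_Vu-u,\bm\varepsilon_q\cdot\bm n\rangle$, are bounded using:
\begin{itemize}
\item the continuous trace inequality $\|w\|_{L^2(\partial K)}^2\le C(h^{-1}\|w\|_K^2+h\|\nabla w\|_K^2)$ for the projection errors;
\item the discrete trace inequality for $\bm\varepsilon_q$;
\item Lemma~\ref{inequality} applied to $(\bm\varepsilon_q,\varepsilon_u,\varepsilon_{\hat u})$, which satisfy (\ref{HDG:1}) up to projection-error data. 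This gives $h_K^{-1/2}\|\varepsilon_u-\varepsilon_{\hat u}\|_{\partial K}+\|\nabla\varepsilon_u\|_K\lesssim\|\bm\varepsilon_q\|_K+\|\sqrt\tau(\mathcal L\varepsilon_u-\varepsilon_{\hat u})\|_{\partial K}$ plus projection terms.
\end{itemize}
Young's inequality then yields the bound for $\bm\varepsilon_q$ and the stabilization term. Lemma~\ref{inequality} again gives $\|\nabla\varepsilon_u\|$, and the triangle inequality finishes the proof. In particular, $\|\sqrt\tau\,\mathcal L(\Pi_Vu-u)\|$ and $\|\nabla(u-\Pi_Vu)\|$ are absorbed into the stated right-hand side.

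The main obstacle is the method-dependent handling in part (2). For HDG-1, $\nabla\cdot\bm W(K)\not\subset$ the complement needed, and $\tau$ vanishes off $F_K^*$, so the face term $\langle(\bm q-\Pi_W\bm q)\cdot\bm n,\varepsilon_u-\varepsilon_{\hat u}\rangle$ must be controlled through Lemma~\ref{inequality} rather than through $\tau$. For HDG-2 the operator $\mathcal L$ must commute correctly with the projections. I would first try a uniform treatment via Lemma~\ref{inequality}, and fall back on the cited analyses \cite{cfq2018,qs2016,es2010} if a method-specific cancellation is needed.
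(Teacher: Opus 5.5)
Your proposal is correct. The paper gives no argument of its own for this lemma: it cites \cite[Theorem 4.17]{de2011} for (1) and \cite{es2010,cfq2018,qs2016} for the three HDG variants in (2). What you write out is the standard reasoning behind those citations. For SIPG this is discrete coercivity, Galerkin orthogonality from Lemma~\ref{dfp}, and a Strang-type boundedness bound using the discrete trace inequality. For HDG it is the energy identity obtained by testing the error equation with $(\bm\varepsilon_q,-\varepsilon_u,-\varepsilon_{\hat u})$. What you gain is a self-contained argument that treats the three HDG methods uniformly. The only method-specific input is Lemma~\ref{inequality}, which is itself imported from the same references.

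Your closing hedge is unnecessary, and the remark that an inclusion fails for HDG-1 is mistaken. For all three pairs one has $\nabla\cdot\bm W(K)\subset V(K)$, $\nabla V(K)\subset\bm W(K)$ and $\bm w_h\cdot\bm n|_F\in\mathcal P^k(F)$. Hence the error triple satisfies (\ref{HDG:1}) exactly, not merely up to projection data; this is the paper's (\ref{rG23-L1-HDG-proof:1}). After testing, only two terms survive.
\begin{itemize}
\item The first is $\langle(\Pi_W\bm q-\bm q)\cdot\bm n,\varepsilon_u-\varepsilon_{\hat u}\rangle_{\partial\mathcal T_h}$. It is bounded by the continuous trace inequality together with Lemma~\ref{inequality}. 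This is what handles the hybrid mixed method, where $\tau\equiv0$, and HDG-1 off $F_K^\ast$.
\item The second is the $\tau$-term. It is controlled by $\|\Pi_Vu-u\|_{L^2(F)}$: for HDG-1 because $\Pi_Vu|_F\in\mathcal P^k(F)$, and for HDG-2 because $\mathcal L=\Pi_{\widehat V}$ and $\tau$ is constant on each face.
\end{itemize}
So the uniform argument closes, and no method-specific fallback is needed.
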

\begin{proof}
The results in (1) can be found in \cite[Theorem 4.17]{de2011}. The results in (2) for hybrid mixed DG, HDG-1 and HDG-2 methods can be found in
\cite{cfq2018, es2010, qs2016}, respectively.
\end{proof}
\begin{lemma}\label{curve-assumption}
For any element $K\in\mathcal{T}_h$ and any two distinct points $a,b\in \overline{K}$, there exist an positive integer $\mathcal{N}_d^K$, a sequence of points $\{y_i\}_{i=0}^{\mathcal{N}_d^K}$ with
$y_0=a$ and $y_{\mathcal{N}_d^K}=b$, and a one-to-one mapping $\boldsymbol{\eta}:[0,1]\rightarrow \mathcal{V}$, such that

(1) $\boldsymbol{\eta}\left(0\right)=a$ and $\boldsymbol{\eta}\left(1\right)=b$;

(2) $\boldsymbol{\eta}(t)\in K$, $\forall t\in(0,1)$;

(3) $\boldsymbol{\eta}$ is Lipschitz on $[0,1]$, and $\mathcal{C}^1$ on $[0,1]$ except at the points $t$ such that $\boldsymbol{\eta}(t)\in \{y_0,\cdots,y_{\mathcal{N}_d^K}\}$;

(4) $\left|\frac{d\boldsymbol{\eta}}{dt}\right|\leq Ch$, $\forall t\in [0,1]$ except at the points $t$ such that $\boldsymbol{\eta}(t)\in \{y_0,\cdots,y_{\mathcal{N}_d^K}\}$.

\noindent Here $1\leq\mathcal{N}_d^K\leq \mathcal{N}_d+2$ is an integer independent of the mesh size $h$ and the element $K$, where $\mathcal{N}_d$ is given in the beginning of Section \ref{sec2}. $C$ is a positive constant independent of the mesh size $h$, $K$, and the points $a,b$. $\mathcal{V}$ is a curve consisting of the line segments $y_0y_1, y_1y_2,\cdots y_{\mathcal{N}_d^{K-1}}y_{\mathcal{N}_d^K}$.
\end{lemma}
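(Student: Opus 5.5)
The idea is to connect $a$ to $b$ through the shape-regular sub-triangulation of $K$ that is assumed in Section~\ref{sec2}. Write $K$ (up to a null set) as the union of at most $\mathcal{N}_d$ sub-simplices $T_1,\dots,T_m$, each shape regular with diameter comparable to $h_K\le h$.

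\textbf{Choosing the chain of simplices.} Let $a\in\overline{T_{i_0}}$ and $b\in\overline{T_{i_1}}$. The open polytope $K$ is connected, and the sub-triangulation is conforming. Hence the dual graph is connected: its vertices are the sub-simplices, and two of them are adjacent when they share a $(d-1)$-face lying in the interior of $K$. I will pick a path $T_{i_0}=S_0,S_1,\dots,S_r=T_{i_1}$ in this graph without repetitions, so $r\le \mathcal{N}_d-1$.

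\textbf{Defining the polygonal curve.} Set $y_0=a$. Let $y_1$ be the barycenter of $S_0$. For $j=1,\dots,r$, alternate barycenters of the interfaces $S_{j-1}\cap S_j$ with barycenters of $S_j$. End with $y_{\mathcal N_d^K}=b$. Consecutive points lie in the closure of a common convex simplex, so each segment lies in that closure. Since barycenters are interior points, each open segment lies in the interior of $S_j$, or in the relative interior of the shared face, which is contained in $K$. This gives property (2).

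At first sight this produces about $2r+2$ points, which is more than $\mathcal{N}_d+2$. To fix the count, I will drop the interface barycenters and join the barycenters of $S_{j-1}$ and $S_j$ directly. The segment between them stays inside $\operatorname{int}(\overline{S_{j-1}\cup S_j})\subset K$. The reason is that the union of two adjacent simplices sharing a facet has the open facet in its interior, so any point of the segment on the common hyperplane lies in the relative interior of that facet. That last point still needs to be verified. The resulting chain has at most $r+3\le \mathcal{N}_d+2$ points.

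If the resulting polygonal path is not injective (for example, if $a$ or $b$ lies on the segment or the path revisits a point), I will shortcut loops. Shortcutting can only remove vertices, so the count bound and the inclusion in $K$ are preserved.

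\textbf{Parametrization.} I parametrize $\mathcal V$ by dividing $[0,1]$ into $\mathcal N_d^K$ equal pieces, each mapped affinely onto one segment. The map is Lipschitz, and it is $C^1$ away from the breakpoints. Its speed on a piece is $\mathcal N_d^K|y_{i+1}-y_i|\le(\mathcal N_d+2)h_K\le Ch$, which gives (1), (3) and (4).

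\textbf{Main obstacle.} The delicate points are the endpoint segments $a\to y_1$ and the case $a=b$ versus distinct points. When $a$ lies on $\partial K$, the open segment from $a$ to the interior barycenter still lies in $K$ by convexity of $\overline{S_0}$, because the barycenter is interior. The other delicate point is controlling the number of points while keeping injectivity. I expect the facet-crossing argument for adjacent simplices, together with the loop shortcutting, to be the main step requiring care.
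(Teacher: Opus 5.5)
\paragraph{Gap in the count-fixing step.} The step you flagged as needing verification fails. You join the barycenters $g_{j-1},g_j$ of two facet-adjacent sub-simplices directly and claim the segment stays in $\overline{S_{j-1}}\cup\overline{S_j}$. This is false in general, and your justification does not imply it. That the open common facet $F$ is interior to the union says nothing about \emph{where} $[g_{j-1},g_j]$ meets the hyperplane of $F$. For obtuse sub-simplices, which shape regularity allows, it can meet that hyperplane outside $F$.

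For example, in 2D let $S_{j-1}$ and $S_j$ have vertices $(0,0),(1,0),(5/2,-1/2)$ and $(0,0),(1,0),(5/2,1/2)$. Their barycenters are $(7/6,\mp 1/6)$, so the segment crosses $\{y=0\}$ at $(7/6,0)$. But both closed triangles meet $\{y=0\}$ only in $[0,1]\times\{0\}$. Take $K$ to be the non-convex quadrilateral formed by these two triangles (scaled by $h$), with this two-triangle sub-triangulation. Then your curve from $a\in S_{j-1}$ to $b\in S_j$ leaves $\overline{K}$, and (2) fails.

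\paragraph{Repair.} Discard the other set of points: keep the facet barycenters and drop the barycenters of the intermediate simplices. This is exactly the paper's construction. Let $c_j$ be the barycenter of $S_{j-1}\cap S_j$, and use the points $a,g_0,c_1,\dots,c_r,g_r,b$ (or $a,g_0,b$ if $r=0$).
\begin{itemize}
\item For $1\le j\le r-1$, $c_j$ and $c_{j+1}$ are barycenters of two facets of $S_j$. These facets are distinct because your path has no repetitions.
\item Hence $[c_j,c_{j+1}]\subset\overline{S_j}$. Every point of the open segment has all barycentric coordinates positive, so it lies in $S_j$.
\item The remaining segments $[a,g_0]$, $[g_0,c_1]$, $[c_r,g_r]$, $[g_r,b]$ are handled by your interior-barycenter argument.
\item Each $c_j$ lies in $K$, since it is in the relative interior of a facet shared by two sub-simplices.
\end{itemize}
This gives $r+3\le\mathcal{N}_d+2$ segments, as the statement requires.

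\paragraph{What is sound.} Your other steps are fine: the chain of sub-simplices, the parametrization, and the speed bound. In two respects you are more careful than the paper's written proof, which only shows $\mathcal{V}\subset\overline{K}$ and asserts injectivity without checking it. First, you verify (2) for the open set $K$. Second, you use loop-shortcutting to obtain injectivity. Strictly, shortcutting does not ``only remove vertices'': it may introduce a crossing point as a new vertex. It never increases the number of segments, though, so your conclusion stands.
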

\begin{proof}
For any element $K\in\mathcal{T}_{h}$, there exists a shape-regular and conforming sub-triangulation consisting of non-overlapping simplices $\{ T_{i}\}_{i=1}^{\mathcal{N}_{d}^{\prime}}$ such that $\overline{K} = \bigcup_{i=1}^{\mathcal{N}_{d}^{\prime}} \overline{T}_{i}$, where $1 \leq \mathcal{N}_{d}^{\prime} \leq \mathcal{N}_{d}$.
Since $K$ is connected, for any given points $a,b\in \overline{K}$, we can extract a connected chain of adjacent simplices from this sub-triangulation that connects $a$ to $b$. Without loss of generality, we re-index this chain as $T_1, T_2, \cdots, T_m$ with $1 \leq m \leq \mathcal{N}_{d}^{\prime}$, such that $a\in \overline{T}_{1}$, $b\in \overline{T}_{m}$, and any two consecutive simplices $T_i$ and $T_{i+1}$ share a common face $F_{i}= \partial T_{i} \cap \partial T_{i+1}$ for $1\leq i \leq m - 1$.

If $m=1$, then $a, b \in \overline{T}_1$. We set $\mathcal{N}_d^K = 2$, $y_0 = a$, $y_1=c_0$, and $y_2 = b$, where $c_0$ denotes the center of $T_1$. Let $\mathcal{V}$ be the curve consisting of the line segments $y_0y_1$ and $y_1y_2$. Since $T_1$ is convex, $\mathcal{V}$ lies entirely within $\overline{T}_1 \subset \overline{K}$.

If $m>1$, let $c_{i}$ be the barycenter of the common face $F_{i}$ for $1\leq i \leq m - 1$. We set $\mathcal{N}_d^K = m+2$, $y_0 = a$, $y_{i+1} = c_i$ for $0 \leq i \leq m$, and $y_{m+2} = b$, where $c_0$ and $c_m$ denote the centers of $T_1$ and $T_{m}$. Let $\mathcal{V}$ be the curve connecting $y_0, y_1, \cdots, y_{m+2}$ sequentially.
Because each simplex is convex, the segments $y_0y_1, y_1y_2$ lie in $\overline{T}_1$, the segments $y_{i-1}y_i$ ($3 < i < m$) lie in $\overline{T}_{i-1}$, and the segments $y_{m}y_{m+1}, y_{m+1}y_{m+2}$ lie in $\overline{T}_m$. Thus, we have $\mathcal{V} \subset \overline{K}$.

In both cases, the curve $\mathcal{V}$ consists of $\mathcal{N}_d^K \leq m+2 \leq \mathcal{N}_d^{\prime}+2 \leq \mathcal{N}_d+2$ line segments.
Since the diameter of each sub-simplex $T_i$ is bounded by the local mesh size $h$, the length of each line segment is at most $h$.
Consequently, the total arc-length $L$ of the curve $\mathcal{V}$ is bounded by $(m+2) h \leq (\mathcal{N}_d+2) h$.

By using a uniform parametrization with respect to the arc-length, we define a one-to-one mapping $\boldsymbol{\eta}: [0,1] \rightarrow \mathcal{V}$ such that the curve is traversed from $a$ to $b$.
Obviously, $\boldsymbol{\eta}$ satisfies conditions (1), (2) and (3).
Furthermore, the magnitude of the derivative satisfies $\left|\frac{d\boldsymbol{\eta}}{dt}\right| \leq (\mathcal{N}_d+2) h$ at all points where it is differentiable (i.e., except at the values $t$ mapping to the vertices $y_i$). Thus, condition (4) holds with $C = (\mathcal{N}_d+2)$.
This completes the proof.
\end{proof}

\subsection{The Interface Green's Function Framework: Proof of Theorem \ref{mn-jump}}\label{sec3:3}

To illustrate the interface Green's function framework, we provide the complete proof of Theorem \ref{mn-jump}.
Firstly, we review estimates of the Green's function of the Poisson equation on convex polyhedral domains in
$\mathbb{R}^{d}$ ($d=2,3$) in section~\ref{sec3:3.1}. Secondly, to estimate the jump of numerical solutions on
interior mesh interfaces, we introduce
an interface Green's function (see (\ref{rG2})) in section~\ref{sec3:3.2} and represent the jump of numerical solutions
in terms of this interface Green's function and its DG approximations in section~\ref{sec3:3.3}.
Similarly, we introduce the other interface Green's function (see (\ref{rG3})) in section~\ref{sec3:3.4}, and use it and
its DG approximations to represent the numerical solutions on boundary faces in section~\ref{sec3:3.5}.
Then, in section~\ref{sec3:3.6}, we provide a crucial Lemma~\ref{ee-rG23} for The Key Estimate for interface Green's functions
on dyadic decomposition of the domain $\Omega$. Finally, we complete the proof in section~\ref{sec3:3.7}.

\subsubsection{Green's function of the Poisson equation}
\label{sec3:3.1}

For $z\in\Omega$, let $\delta_z$ be the Dirac delta function such that $(\delta_z(x), v(x))=v(z)$ for any $v(x)\in\mathcal{C}(\Omega)$. By virtue of $\delta_z$, we define the Green's function as
\begin{align}
-\Delta G(z,x)=\delta_z\quad {\rm in}~\Omega,\quad G(z,x)=0\quad {\rm on}~\partial\Omega.\label{G}
\end{align}
From \cite[Theorem 2]{c1985}, we know that for each $z\in\Omega$, the above problem has a unique weak solution $G(z,x)\in W_0^{1,q}(\Omega)$ for $1\leq q<\frac{d}{d-1}$. Moreover, from \cite[Theorem 5.1.1 and Theorem 5.1.8]{mr2010}, \cite[Lemma 3.2]{dlsw2012}, \cite[Theorem 1]{glrs2009}, and
\cite[Proposition 4.1]{drs2024} we have
\begin{align}\label{pe-G:0}
|G(z,x)|\leq \begin{cases}
C|x-z|^{-1},\quad &d=3,\\
C\ln\frac{1}{|x-z|},\quad &d=2,
\end{cases}
\end{align}
\begin{align}
\left|\frac{\partial^{\alpha+\beta}G(z,x)}{\partial z^{\alpha}\partial x^{\beta}}\right|\leq C|x-z|^{2-d-|\alpha|-|\beta|},\quad 0<|\alpha+\beta|\leq 2,~
0\leq |\alpha|,|\beta|\leq 1,\label{pe-G}
\end{align}
for $z\neq x$, and
\begin{align}
\frac{|\partial_{z_i}G(z,x)-\partial_{y_i}G(y,x)|}{|z-y|^{\gamma}}\leq C\left(|x-z|^{-d+1-\gamma}+|x-y|^{-d+1-\gamma}\right),\label{pe-G:1}
\end{align}
for all $x,y,z\in \Omega$ and $y\neq z$, where $\gamma\in(0,1)$ is a constant depending on the domain $\Omega$ and $\partial_{z_i}$ denotes the partial derivative with respect to the $i$-th component of $z$.

\subsubsection{Interface Green's function for the jump of numerical solutions on interior mesh interfaces}
\label{sec3:3.2}

In addition to the Dirac delta function $\delta_z$, for each $z\in \overline{K}_z$ with $K_z\in\mathcal{T}_h$, we also define a smooth discrete delta function $\delta_{h,z}$ supported in $K_z$ (cf. \cite[Appendix A]{sw1995}), which satisfies
\begin{align}
\chi(z)=\int_{K_z}\delta_{h,z}(x)\chi(x)dx\quad \forall \chi\in \mathcal{P}^s(K_z),\quad \int_{K_z}\delta_{h,z}(x)dx=1,\label{ddd:1}
\end{align}
and
\begin{align}
|\delta_{h,z}|_{W^{l,p}(\Omega)}\leq Ch^{-l-d\left(1-\frac{1}{p}\right)}\quad \forall 1\leq p\leq \infty,~l=0,1,2,\label{ddd:2}
\end{align}
where $s=k$ for the IPDG, the hybrid mixed DG and the HDG-1 methods, and $s=k+1$ for the HDG-2 method.

Based on $\delta_{h,z}$, we are going to define the interface Green's functions in (\ref{rG2}) and (\ref{rG2-mixed}).
For each $x_{\ast}\in F$ with $F\in\mathcal{E}_h^o$, let $K_{\ast}^+$ and $K_{\ast}^-$
be the elements sharing the common edge/face $F$, then we define the interface Green's function $\Gamma_{{\rm if}}$
(for the jump of numerical solutions along interior mesh interfaces) to be
\begin{align}
-\Delta\Gamma_{{\rm if}}(x_{\ast},x)=\delta_{h,K_{\ast}^+}-\delta_{h,K_{\ast}^-}\quad {\rm in}~\Omega,\quad \Gamma_{{\rm if}}(x_{\ast},x)=0\quad {\rm on}~\partial\Omega,\label{rG2}
\end{align}
where $\delta_{h,K_{\ast}^+}$ and $\delta_{h,K_{\ast}^-}$ denote the discrete delta function $\delta_{h,x_{\ast}}$ supported in $K_{\ast}^+$ and $K_{\ast}^-$
respectively. By setting $\boldsymbol{\mathcal{R}}_{{\rm if}}(x_{\ast},x)=-\nabla\Gamma_{{\rm if}}(x_{\ast},x)$, we arrive at the mixed formulation
\begin{align}
\boldsymbol{\mathcal{R}}_{{\rm if}}+\nabla\Gamma_{{\rm if}}=0\quad {\rm in}~\Omega,\quad
\nabla\cdot\boldsymbol{\mathcal{R}}_{{\rm if}}=\delta_{h,K_{\ast}^+}-\delta_{h,K_{\ast}^-}\quad {\rm in}~\Omega,\quad
\Gamma_{{\rm if}}=0\quad {\rm on}~\partial\Omega.\label{rG2-mixed}
\end{align}

\subsubsection{Representation of the jump of numerical solutions on interior mesh interfaces}
\label{sec3:3.3}

The following representation formulas reduce the estimation of interface jumps to the approximation of
interface Green's functions.
Let $\Gamma_{{\rm if},h}^{{\rm DG}}\in V_h^{{\rm DG}}$ and $(\boldsymbol{\mathcal{R}}_{{\rm if},h},\Gamma_{{\rm if},h},\widehat{\Gamma}_{{\rm if},h})\in {\bm W}_h\times V_h\times \widehat{V}_h$ be the IPDG and HDG approximations of problems (\ref{rG2}) and (\ref{rG2-mixed}) such that
\begin{align}
a_h(\Gamma_{{\rm if},h}^{{\rm DG}},v_h)=(\delta_{h,K_{\ast}^+}-\delta_{h,K_{\ast}^-},v_h),\quad \forall v_h\in V_h^{{\rm DG}},\label{rG2-DG}
\end{align}
and
\begin{align}
\mathcal{B}(\boldsymbol{\mathcal{R}}_{{\rm if},h},\Gamma_{{\rm if},h},\widehat{\Gamma}_{{\rm if},h};{\bm w}_h,v_h,\widehat{v}_h)=-(\delta_{h,K_{\ast}^+}-\delta_{h,K_{\ast}^-},v_h),\quad \forall ({\bm w}_h,v_h,\widehat{v}_h)\in{\bm W}_h\times V_h\times \widehat{V}_h.\label{rG2-HDG}
\end{align}
Then, for the IPDG method, applying Lemma \ref{dfp}, (\ref{ddd:2}), (\ref{rG2}), and (\ref{rG2-DG}), we have
\begin{align}
&\left[\Pi_ku-u_h^{{\rm DG}}\right](x_{\ast})=\left(\delta_{h,K_{\ast}^+}-\delta_{h,K_{\ast}^-},\Pi_ku-u_{h}^{{\rm DG}}\right)\nonumber\\
=&a_h(\Gamma_{{\rm if},h}^{{\rm DG}}-\Gamma_{{\rm if}},\Pi_ku-u)+(\delta_{h,K_{\ast}^+}-\delta_{h,K_{\ast}^-},\Pi_ku-u)\nonumber\\
\leq&C\Big(\sum_{F\in\mathcal{E}_h}\left(h_F\|\{\nabla(\Gamma_{{\rm if}}-\Gamma_{{\rm if},h}^{{\rm DG}})\cdot{\bm n}\}\|_{L^1(F)}+\|[\Gamma_{{\rm if}}-\Gamma_{{\rm if},h}^{{\rm DG}}]\|_{L^1(F)}\right)\nonumber\\
&+\|\nabla(\Gamma_{{\rm if}}-\Gamma_{{\rm if},h}^{{\rm DG}})\|_{L_h^1(\Omega)}\Big)\left(\|\nabla(u-\Pi_ku)\|_{L_h^{\infty}(\Omega)}+h^{-1}\|u-\Pi_ku\|_{L^{\infty}(\Omega)}\right)\label{rG2:1}\\
&+\|\delta_{h,K_{\ast}^+}-\delta_{h,K_{\ast}^-}\|_{L^1(\Omega)}\|u-\Pi_ku\|_{L^{\infty}(\Omega)}\nonumber\\
\leq& C\Big(h+\sum_{F\in\mathcal{E}_h}\left(h_F\|\{\nabla(\Gamma_{{\rm if}}-\Gamma_{{\rm if},h}^{{\rm DG}})\cdot{\bm n}\}\|_{L^1(F)}+\|[\Gamma_{{\rm if}}-\Gamma_{{\rm if},h}^{{\rm DG}}]\|_{L^1(F)}\right)\nonumber\\
&+\|\nabla(\Gamma_{{\rm if}}-\Gamma_{{\rm if},h}^{{\rm DG}})\|_{L_h^1(\Omega)}\Big)\|\nabla(u-\Pi_ku)\|_{L_h^{\infty}(\Omega)}.\nonumber
\end{align}
For the HDG methods, using Lemma \ref{dfp}, (\ref{rG2-mixed}), and (\ref{rG2-HDG}), we have
\begin{align*}
-[\Pi_Vu-u_h](x_{\ast})=&-(\delta_{h,K_{\ast}^+}-\delta_{h,K_{\ast}^-},\Pi_Vu-u_h)_{\mathcal{T}_h}\nonumber\\
=&\mathcal{B}(\boldsymbol{\mathcal{R}}_{{\rm if},h}-\boldsymbol{\mathcal{R}}_{{\rm if}},\Gamma_{{\rm if},h}-\Gamma_{{\rm if}},\widehat{\Gamma}_{{\rm if},h}-\Gamma_{{\rm if}};\Pi_W{\bm q}-{\bm q},\Pi_Vu-u,\Pi_{\widehat{V}}u-u)\nonumber\\
&+\mathcal{B}(\boldsymbol{\mathcal{R}}_{{\rm if}},\Gamma_{{\rm if}},\Gamma_{{\rm if}};\Pi_W{\bm q}-{\bm q},\Pi_Vu-u,\Pi_{\widehat{V}}u-u)=J_1.\nonumber
\end{align*}
For $J_1$, by (\ref{ddd:2}) and the definitions of $\mathcal{B}$ and $\mathcal{L}$, we obtain
\begin{align*}
J_1=&(\boldsymbol{\mathcal{R}}_{{\rm if},h}-\boldsymbol{\mathcal{R}}_{{\rm if}},\Pi_W{\bm q}-{\bm q})+
(\nabla(\Gamma_{{\rm if},h}-\Gamma_{{\rm if}}),\Pi_W{\bm q}-{\bm q})_{\mathcal{T}_h}-\langle\Gamma_{{\rm if},h}-\widehat{\Gamma}_{{\rm if},h},(\Pi_W{\bm q}-{\bm q})\cdot{\bm n}\rangle_{\partial\mathcal{T}_h}\\
&+(\boldsymbol{\mathcal{R}}_{{\rm if},h}-\boldsymbol{\mathcal{R}}_{{\rm if}},\nabla(\Pi_Vu-u))_{\mathcal{T}_h}-\langle
(\boldsymbol{\mathcal{R}}_{{\rm if},h}-\boldsymbol{\mathcal{R}}_{{\rm if}})\cdot{\bm n},\Pi_Vu-u\rangle_{\partial\mathcal{T}_h}\\
&-(\delta_{h,K_{\ast}^+}-\delta_{h,K_{\ast}^-},\Pi_Vu-u)-\langle\tau(\mathcal{L}\Gamma_{{\rm if},h}-\widehat{\Gamma}_{{\rm if},h}),
\mathcal{L}(\Pi_Vu-u)\rangle_{\partial\mathcal{T}_h}\\
\leq&C\Big(\sum_{K\in\mathcal{T}_h}\|\Gamma_{{\rm if},h}-\widehat{\Gamma}_{{\rm if},h}\|_{L^1(\partial K)}+\|\boldsymbol{\mathcal{R}}_{{\rm if}}-\boldsymbol{\mathcal{R}}_{{\rm if},h}\|_{L^1(\Omega)}+h\|\nabla(\boldsymbol{\mathcal{R}}_{{\rm if}}-\boldsymbol{\mathcal{R}}_{{\rm if},h})\|_{L_h^1(\Omega)}\\
&+\|\nabla(\Gamma_{{\rm if}}-\Gamma_{{\rm if},h})\|_{L_h^1(\Omega)}+h\Big)\Big(\|{\bm q}-\Pi_W{\bm q}\|_{L^{\infty}(\Omega)}+\|\nabla(u-\Pi_Vu)\|_{L^{\infty}_h(\Omega)}\Big).
\end{align*}
Therefore,
\begin{align}
-[\Pi_Vu-u_h](x_{\ast})\leq& C\Big(\sum_{K\in\mathcal{T}_h}\|\Gamma_{{\rm if},h}-\widehat{\Gamma}_{{\rm if},h}\|_{L^1(\partial K)}+\|\boldsymbol{\mathcal{R}}_{{\rm if}}-\boldsymbol{\mathcal{R}}_{{\rm if},h}\|_{L^1(\Omega)}+h\|\nabla(\boldsymbol{\mathcal{R}}_{{\rm if}}-\boldsymbol{\mathcal{R}}_{{\rm if},h})\|_{L_h^1(\Omega)}\nonumber\\
&+\|\nabla(\Gamma_{{\rm if}}-\Gamma_{{\rm if},h})\|_{L_h^1(\Omega)}+h\Big)\Big(\|{\bm q}-\Pi_W{\bm q}\|_{L^{\infty}(\Omega)}+\|\nabla(u-\Pi_Vu)\|_{L^{\infty}_h(\Omega)}\Big).\label{rG2:2}
\end{align}

\subsubsection{Interface Green's function for the numerical solutions on boundary faces}
\label{sec3:3.4}

Next, for each $x^{\ast}\in F$ with $F\in\mathcal{E}_h^{\partial}$, let $K^{\ast}$ be the element including the
face $F$, then we define the interface Green's function (for numerical solutions on boundary faces) in (\ref{rG3}) and
its mixed formulation in  (\ref{rG3-mixed}):
\begin{align}
-\Delta\Gamma_{{\rm bf}}(x^{\ast},x)=\delta_{h,x^{\ast}}\quad {\rm in}~\Omega,\quad \Gamma_{{\rm bf}}(x^{\ast},x)=0\quad {\rm on}~\partial\Omega.\label{rG3}
\end{align}
By setting $\boldsymbol{\mathcal{R}}_{{\rm bf}}(x^{\ast},x)=-\nabla\Gamma_{{\rm bf}}(x^{\ast},x)$, we arrive at the mixed formulation
\begin{align}
\boldsymbol{\mathcal{R}}_{{\rm bf}}+\nabla\Gamma_{{\rm bf}}=0\quad {\rm in}~\Omega,\quad
\nabla\cdot\boldsymbol{\mathcal{ R}}_{{\rm bf}}=\delta_{h,x^{\ast}}\quad {\rm in}~\Omega,\quad
\Gamma_{{\rm bf}}=0\quad {\rm on}~\partial\Omega.\label{rG3-mixed}
\end{align}

\subsubsection{Representation of the numerical solutions on boundary faces}
\label{sec3:3.5}

For the primary and mixed formulations (\ref{rG3}) and (\ref{rG3-mixed}), let $\Gamma_{{\rm bf},h}^{{\rm DG}}\in V_h^{{\rm DG}}$ and $(\boldsymbol{\mathcal{R}}_{{\rm bf},h},\Gamma_{{\rm bf},h},\widehat{\Gamma}_{{\rm bf},h})\in {\bm W}_h\times V_h\times\widehat{V}_h$ be their IPDG and HDG
approximations such that
\begin{align}
a_h(\Gamma_{{\rm bf},h}^{{\rm DG}},v_h)=(\delta_{h,x^{\ast}},v_h),\quad \forall v_h\in V_h^{{\rm DG}},\label{rG3-DG}
\end{align}
and
\begin{align}
\mathcal{B}(\boldsymbol{\mathcal{R}}_{{\rm bf},h},\Gamma_{{\rm bf},h},\widehat{\Gamma}_{{\rm bf},h};{\bm w}_h,v_h,\widehat{v}_h)=
-(\delta_{h,x^{\ast}},v_h),\quad \forall ({\bm w}_h,v_h,\widehat{v}_h)\in {\bm W}_h\times V_h\times \widehat{V}_h.\label{rG3-HDG}
\end{align}
Then, by a similar method that has been used for the derivation of (\ref{rG2:1}) and (\ref{rG2:2}), we have
\begin{align}
&[\Pi_ku-u_h^{{\rm DG}}](x^{\ast})=(\delta_{h,x^{\ast}},\Pi_ku-u_h^{{\rm DG}})\nonumber\\
=&a_h(\Gamma_{{\rm bf},h}^{{\rm DG}}-\Gamma_{{\rm bf}},\Pi_ku-u)+(\delta_{h,\ast},\Pi_ku-u)\nonumber\\
\leq&C\Big(h+\sum_{F\in\mathcal{E}_h}\left(h_F\|\{\nabla(\Gamma_{{\rm bf}}-\Gamma_{{\rm bf},h}^{{\rm DG}})\cdot{\bm n}\}\|_{L^1(F)}+\|[\Gamma_{{\rm bf}}-\Gamma_{{\rm bf},h}^{{\rm DG}}]\|_{L^1(F)}\right)\label{rG3:1}\\
&+\|\nabla(\Gamma_{{\rm bf}}-\Gamma_{{\rm bf},h}^{{\rm DG}})\|_{L_h^1(\Omega)}\Big)\|\nabla(u-\Pi_ku)\|_{L_h^{\infty}(\Omega)},\nonumber
\end{align}
and
\begin{align}
&-[\Pi_Vu-u_h](x^{\ast})=-(\delta_{h,x^{\ast}},\Pi_Vu-u_h)\nonumber\\
=&\mathcal{B}(\boldsymbol{\mathcal{R}}_{{\rm bf},h}-\boldsymbol{\mathcal{R}}_{{\rm bf}},\Gamma_{{\rm bf},h}-\Gamma_{{\rm bf}},\widehat{\Gamma}_{{\rm bf},h}
-\Gamma_{{\rm bf}};\Pi_W{\bm q}-{\bm q},\Pi_Vu-u,\Pi_{\widehat{V}}u-u)\nonumber\\
&+\mathcal{B}(\boldsymbol{\mathcal{R}}_{{\rm bf}},\Gamma_{{\rm bf}},\Gamma_{{\rm bf}};\Pi_W{\bm q}-{\bm q},\Pi_Vu-u,\Pi_{\widehat{V}}u-u)\nonumber\\
=&(\boldsymbol{\mathcal{R}}_{{\rm bf},h}-\boldsymbol{\mathcal{R}}_{{\rm bf}},\Pi_W{\bm q}-{\bm q})+
(\nabla(\Gamma_{{\rm bf},h}-\Gamma_{{\rm bf}}),\Pi_W{\bm q}-{\bm q})_{\mathcal{T}_h}\nonumber\\
&-\langle\Gamma_{{\rm bf},h}-\widehat{\Gamma}_{{\rm bf},h},(\Pi_W{\bm q}-{\bm q})\cdot{\bm n}\rangle_{\partial\mathcal{T}_h}
+(\boldsymbol{\mathcal{R}}_{{\rm bf},h}-\boldsymbol{\mathcal{R}}_{{\rm bf}},\nabla(\Pi_Vu-u))_{\mathcal{T}_h}\label{rG3:2}\\
&-\langle(\boldsymbol{\mathcal{R}}_{{\rm bf},h}-\boldsymbol{\mathcal{R}}_{{\rm bf}})\cdot{\bm n},\Pi_Vu-u\rangle_{\partial\mathcal{T}_h}
-\langle\tau(\mathcal{L}\Gamma_{{\rm bf},h}-\widehat{\Gamma}_{{\rm bf},h}),\mathcal{L}(\Pi_Vu-u)\rangle_{\partial\mathcal{T}_h}\nonumber\\
&-(\delta_{h,x^{\ast}},\Pi_Vu-u)\nonumber\\
\leq&C\Big(\sum_{K\in\mathcal{T}_h}\|\Gamma_{{\rm bf},h}-\widehat{\Gamma}_{{\rm bf},h}\|_{L^1(\partial K)}+\|\boldsymbol{\mathcal{R}}_{{\rm bf}}-\boldsymbol{\mathcal{R}}_{{\rm bf},h}\|_{L^1(\Omega)}+\|\nabla(\Gamma_{{\rm bf}}-\Gamma_{{\rm bf},h})\|_{L_h^1(\Omega)}\nonumber\\
&+h\|\nabla(\boldsymbol{\mathcal{R}}_{{\rm bf}}-\boldsymbol{\mathcal{R}}_{{\rm bf},h})\|_{L_h^1(\Omega)}+h\Big)
\Big(\|\Pi_W{\bm q}-{\bm q}\|_{L^{\infty}(\Omega)}+\|\nabla(u-\Pi_Vu)\|_{L_h^{\infty}(\Omega)}\Big).\nonumber
\end{align}

\subsubsection{The Key Estimate for interface Green's functions on dyadic decomposition of the domain}
\label{sec3:3.6}

Furthermore, we introduce a dyadic decomposition of the domain $\Omega$. Without loss of generality, we assume
that ${\rm diam}(\Omega)\leq 1$. Then, for a fixed point $z\in \overline{\Omega}$, we define
\begin{align*}
\Omega_{\ast}(z)=\{x\in\Omega: |x-z|\leq Mh\},\quad \Omega_j(z)=\{x\in\Omega: d_{j+1}\leq|x-z|\leq d_j\},
\end{align*}
where $d_j=2^{-j}$ for $j=0,1,\cdots,I$ with $d_{I+1}\leq Mh<d_I~(I\thickapprox|\ln h|)$. Here the constant
$M \geq 3$ is independent of the mesh size $h$ and will be determined in Section \ref{sec3:3.7}. Therefore,
\begin{align*}
\Omega=\Omega_{\ast}(z)\cup\bigcup_{j=0}^I\Omega_j(z).
\end{align*}
In addition, we define
\begin{align*}
\Omega_j^1(z)=\Omega_{j-1}(z)\cup\Omega_j(z)\cup\Omega_{j+1}(z),\\
\Omega_j^2(z)=\Omega_{j-2}(z)\cup\Omega_j^1(z)\cup\Omega_{j+2}(z),\\
\Omega_j^3(z)=\Omega_{j-3}(z)\cup\Omega_j^2(z)\cup\Omega_{j+3}(z),\\
\Omega_j^4(z)=\Omega_{j-4}(z)\cup\Omega_j^3(z)\cup\Omega_{j+4}(z).
\end{align*}

\begin{lemma} (The Key Estimate for Interface Green's Functions)
\label{ee-rG23}
Let $\Gamma_{{\rm if}}(x_{\ast},x)$ and $\Gamma_{{\rm bf}}(x^{\ast},x)$ be the interface Green's functions defined
in (\ref{rG2}) and (\ref{rG3}) respectively, then we have
\begin{align}
|\Gamma_{{\rm if}}|_{H^2(\Omega_j^2(x_{\ast}))}\leq Chd_j^{-1-\frac{d}{2}},\quad |\Gamma_{{\rm bf}}|_{H^2(\Omega_j^2(x^{\ast}))}\leq Chd_j^{-1-\frac{d}{2}}.\label{ee-rG23:1}
\end{align}
\end{lemma}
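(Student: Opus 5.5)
The approach is to combine a cancellation in the pointwise representation of $\Gamma_{{\rm if}}$ and $\Gamma_{{\rm bf}}$ with a local $H^2$ estimate for harmonic functions on convex domains. First, by (\ref{rG2}) and the symmetry of $G$,
\begin{align*}
\Gamma_{{\rm if}}(x_{\ast},x)=\int_{K_{\ast}^+}\delta_{h,K_{\ast}^+}(y)G(y,x)\,dy-\int_{K_{\ast}^-}\delta_{h,K_{\ast}^-}(y)G(y,x)\,dy .
\end{align*}
Both discrete delta functions have unit mass by (\ref{ddd:1}), so I can subtract $G(x_{\ast},x)$ inside each integral. This gives
\begin{align*}
\Gamma_{{\rm if}}(x_{\ast},x)=\int_{K_{\ast}^+}\delta_{h,K_{\ast}^+}(y)\big(G(y,x)-G(x_{\ast},x)\big)dy-\int_{K_{\ast}^-}\delta_{h,K_{\ast}^-}(y)\big(G(y,x)-G(x_{\ast},x)\big)dy ,
\end{align*}
and the same identity holds after applying $\nabla_x$. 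For $\Gamma_{{\rm bf}}$ the cancellation is automatic: $x^{\ast}\in\partial\Omega$, so $G(x^{\ast},x)=0$ by the boundary condition in (\ref{G}) and symmetry of $G$, and the same difference can be inserted for free.

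Second, I connect $y\in \overline{K_{\ast}^{\pm}}$ to $x_{\ast}$ (or $x^{\ast}$) by the polygonal path of Lemma \ref{curve-assumption}. The path stays in $\overline{K_{\ast}^{\pm}}$ and has length at most $Ch$. I then integrate $\partial_z G$ and $\partial_z\partial_x G$ along it, using (\ref{pe-G}) with $|\alpha|=1$, $|\beta|\le 1$. Every point $z$ on the path satisfies $|x-z|\ge |x-x_{\ast}|-Ch\ge d_j/C$ once $M$ is large and $x$ lies away from the support. Since $\|\delta_{h,\cdot}\|_{L^1}\le C$ by (\ref{ddd:2}), this yields
\begin{align*}
|\Gamma_{{\rm if}}(x_{\ast},x)|\le Chd_j^{1-d},\qquad |\nabla_x\Gamma_{{\rm if}}(x_{\ast},x)|\le Chd_j^{-d},\qquad x\in\Omega_j^3(x_{\ast}).
\end{align*}
The same bounds hold for $\Gamma_{{\rm bf}}$. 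Because $|\Omega_j^3|\le Cd_j^d$, it follows that $\|\Gamma\|_{L^2(\Omega_j^3)}\le Chd_j^{1-d/2}$ and $|\Gamma|_{H^1(\Omega_j^3)}\le Chd_j^{-d/2}$.

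Third, on $\Omega_j^3$ the source vanishes, so $-\Delta\Gamma=0$ there with $\Gamma=0$ on $\partial\Omega\cap\overline{\Omega_j^3}$. I take a cutoff $\omega$ equal to $1$ on $\Omega_j^2$, supported in $\Omega_j^3$, with $|\nabla^m\omega|\le Cd_j^{-m}$. Then $\omega\Gamma\in H^1_0(\Omega)$ and
\begin{align*}
-\Delta(\omega\Gamma)=-2\nabla\omega\cdot\nabla\Gamma-\Gamma\Delta\omega .
\end{align*}
Convexity of $\Omega$ gives the global bound $|\omega\Gamma|_{H^2(\Omega)}\le C\|\Delta(\omega\Gamma)\|_{L^2(\Omega)}$. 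Hence
\begin{align*}
|\Gamma|_{H^2(\Omega_j^2)}\le C\big(d_j^{-1}|\Gamma|_{H^1(\Omega_j^3)}+d_j^{-2}\|\Gamma\|_{L^2(\Omega_j^3)}\big)\le Chd_j^{-1-\frac d2},
\end{align*}
which is (\ref{ee-rG23:1}). This cutoff argument sidesteps any need for pointwise bounds on third derivatives of $G$ near edges and corners, which (\ref{pe-G}) does not supply.

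The main obstacle is bookkeeping the indices $j$ close to $I$. There $\Omega_j^3$ reaches down to $|x-x_{\ast}|\sim d_{j+4}$, possibly inside $\Omega_{\ast}$, where the support of the deltas may intrude and the distance bound used in the second step fails. I would treat $j\le I-4$ by the argument above, choosing $M$ large so that $d_{j+4}\ge Mh/16$ exceeds the support radius $Ch$ by a margin. For the finitely many remaining $j$, $d_j\sim Mh$, so it suffices to use global $H^2$ regularity on the convex domain:
\begin{align*}
|\Gamma|_{H^2(\Omega)}\le C\|\delta_{h,K_{\ast}^+}-\delta_{h,K_{\ast}^-}\|_{L^2(\Omega)}\le Ch^{-d/2},
\end{align*}
by (\ref{ddd:2}). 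Since $h\cdot d_j^{-1-d/2}\sim h^{-d/2}$ in this range, this bound is already of the required order $Chd_j^{-1-d/2}$ with $M$-dependent constants. The same bound with $\delta_{h,x^{\ast}}$ alone covers $\Gamma_{{\rm bf}}$.
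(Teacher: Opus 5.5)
Your proposal is correct and follows the paper's proof essentially step for step: the cutoff plus convex-domain $H^2$ regularity reduction to the $L^2$ and $H^1$ norms on $\Omega_j^3$, the representation through $G$ with the unit-mass cancellation (or $G(\cdot,x^{\ast})=0$ for $\Gamma_{\rm bf}$), and integration along the path of Lemma \ref{curve-assumption} using (\ref{pe-G}). Your separate treatment of the finitely many indices $j$ near $I$ via the global bound $|\Gamma|_{H^2(\Omega)}\le Ch^{-d/2}$ is a harmless extra safeguard, with an $M$-dependent constant that is fine since $M$ is eventually fixed; the paper leaves this case implicit and simply asserts the distance bound (\ref{Taylor_points_prop2}) for all $x\in\Omega_j^3$.
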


\begin{remark}
\label{remark_key_lemma}
This lemma is the key analytical ingredient of the paper. Unlike classical Green-function estimates, its proof exploits cancellation between neighboring discrete delta functions, yielding one additional factor of \(h\).
This additional factor is precisely what removes the logarithmic dependence in the subsequent interface estimates.
\end{remark}

\begin{proof}
Let $\varphi\in\mathcal{C}_0^{\infty}(\Omega_j^3(x_{\ast}))$ such that $\varphi\equiv 1$ in $\Omega_j^2(x_{\ast})$ with
$|\varphi|_{W^{l,\infty}(\Omega)}\leq Cd_j^{-l}$ for $l=0,1,2$. Then we have
\begin{align*}
-\Delta(\varphi\Gamma_{{\rm if}})=-\Gamma_{{\rm if}}\Delta\varphi-2\nabla\varphi\cdot\nabla\Gamma_{{\rm if}},
\end{align*}
and
\begin{align}
|\Gamma_{{\rm if}}|_{H^2(\Omega_j^2(x_{\ast}))}\leq |\varphi\Gamma_{{\rm if}}|_{H^2(\Omega)}\leq C\left(d_j^{-2}\|\Gamma_{{\rm if}}\|_{L^2(\Omega_j^3(x_{\ast}))}+d_j^{-1}\|\nabla\Gamma_{{\rm if}}\|_{L^2(\Omega_j^3(x_{\ast}))}\right).\label{ee-rG23-proof:1}
\end{align}
For any $x\in\Omega_j^3(x_{\ast})$, using the definitions of the Green's function $G$ (see (\ref{G})) and the Dirac delta function $\delta_x$, we obtain
\begin{align}
\Gamma_{{\rm if}}(x_{\ast},x)=&(\delta_x(y),\Gamma_{{\rm if}}(x_{\ast},y))
= (\nabla\Gamma_{{\rm if}}(x_{\ast},y),\nabla G(x,y)) \label{ee-rG23-proof:ad1}\\
=& (\delta_{h,K_{\ast}^+}(y), G(x,y))_{K_{\ast}^+}-(\delta_{h,K_{\ast}^-}(y),G(x,y))_{K_{\ast}^-},
\nonumber
\end{align}
and
\begin{align}
\nabla_x\Gamma_{{\rm if}}(x_{\ast},x)=& (\delta_{h,K_{\ast}^+}(y), \nabla_xG(x,y))_{K_{\ast}^+}-(\delta_{h,K_{\ast}^-}(y),\nabla_xG(x,y))_{K_{\ast}^-} .\label{ee-rG23-proof:ad2}
\end{align}
Here, for any $x\in\Omega_j^3(x_{\ast})$, we denote by
\begin{align*}
(\delta_{h,K_{\ast}^+}(y), \nabla_xG(x,y))_{K_{\ast}^+}
= \left( (\delta_{h,K_{\ast}^+}(y), \partial_{x_{1}} G(x,y))_{K_{\ast}^+}, \cdots,
(\delta_{h,K_{\ast}^+}(y), \partial_{x_{d}} G(x,y))_{K_{\ast}^+} \right)\in \mathbb{R}^{d}.
\end{align*}

According to (\ref{pe-G}) and the fact that $x \in \Omega_{j}^{3}(x_{\ast})$, $G(x,y)$ and $\nabla_xG(x,y)$
are differentiable with respect to the second variable $y\in K_{\ast}^{+} \cup K_{\ast}^{-}$ and satisfy
\begin{align}
\left|\nabla_yG(x,y)\right|\leq C|x-y|^{1-d},\quad \left|\nabla_y\nabla_xG(x,y)\right|\leq C|x-y|^{-d}.\label{Taylor_expansions_ad1}
\end{align}
For any $y \in K_{\ast}^{+}$, we denote by $\boldsymbol{\eta}_{y}^{+}$ a curve satisfying (2), (3), (4) in Lemma \ref{curve-assumption}
with $\boldsymbol{\eta}_{y}^{+}(0) = x_{\ast}$ and $\boldsymbol{\eta}_{y}^{+}(1) = y$. Similarly,
for any $y \in K_{\ast}^{-}$, we denote by $\boldsymbol{\eta}_{y}^{-}$ a curve satisfying (2), (3), (4) in Lemma \ref{curve-assumption}
with $\boldsymbol{\eta}_{y}^{-}(0) = x_{\ast}$ and $\boldsymbol{\eta}_{y}^{-}(1) = y$.
Then,
\begin{subequations}
\label{Taylor_expansions1}
\begin{align}
G(x,y)=G(x,x_{\ast})+ \int_{0}^{1} \nabla_y G(x,\boldsymbol{\eta}_{y}^{+}(s))
\cdot\frac{d \boldsymbol{\eta}_{y}^{+}(s)}{ds}ds,
\quad \forall y\in K_{\ast}^+,\\
G(x,y)=G(x,x_{\ast})+\int_{0}^{1} \nabla_y G(x,\boldsymbol{\eta}_{y}^{-}(s))
\cdot\frac{d \boldsymbol{\eta}_{y}^{-}(s)}{ds}ds,
\quad \forall y\in K_{\ast}^{-},
\end{align}
\end{subequations}
and
\begin{subequations}
\label{Taylor_expansions2}
\begin{align}
\nabla_xG(x,y)=\nabla_xG(x,x_{\ast})+\int_{0}^{1}\nabla_x\left(\nabla_y G(x,\boldsymbol{\eta}_{y}^{+}(s)
\cdot \frac{d \boldsymbol{\eta}_{y}^{+}(s)}{ds} \right) ds,
\quad \forall y\in K_{\ast}^+,\\
\nabla_xG(x,y)=\nabla_xG(x,x_{\ast})+\int_{0}^{1}\nabla_x\left(\nabla_y G(x,\boldsymbol{\eta}_{y}^{-}(s)
\cdot \frac{d \boldsymbol{\eta}_{y}^{-} (s)}{ds} \right) ds,
\quad \forall y\in K_{\ast}^{-}.
\end{align}
\end{subequations}
Moreover, Lemma \ref{curve-assumption} implies that for any $s \in (0,1)$ and any $x \in \Omega_{j}^{3}(x_{\ast})$,
\begin{subequations}
\label{Taylor_points_props}
\begin{align}
\label{Taylor_points_prop1}
& \boldsymbol{\eta}_{y}^{+}(s) \in K_{\ast}^{+} \quad \forall y\in K_{\ast}^{+},\quad \text{ and }\quad
\boldsymbol{\eta}_{y}^{-}(s) \in K_{\ast}^{-} \quad \forall y\in K_{\ast}^{-}, \\
\label{Taylor_points_prop2}
& \left| x - \boldsymbol{\eta}_{y}^{+}(s) \right| \geq Cd_{j} \quad \forall y\in K_{\ast}^{+}, \text{ and }
\left| x - \boldsymbol{\eta}_{y}^{-}(s) \right| \geq Cd_{j} \quad \forall y\in K_{\ast}^{-},
\end{align}
\end{subequations}
where the constant $C$ is independent of the mesh size $h$, $x\in \Omega_{j}^{3}(x_{\ast})$, and
$y \in K_{\ast}^{+}\cup K_{\ast}^{-}$.
By (\ref{ddd:1}), we have that for any $x\in \Omega_{j}^{3}(x_{\ast})$,
\begin{subequations}
\label{Taylor_zero_terms}
\begin{align}
(\delta_{h,K_{\ast}^{+}}(y), G(x,x_{\ast}))_{K_{\ast}^{+}}
= & (\delta_{h,K_{\ast}^{-}}(y), G(x,x_{\ast}))_{K_{\ast}^{-}}, \\
(\delta_{h,K_{\ast}^{+}}(y), \nabla_x G(x,x_{\ast}))_{K_{\ast}^{+}}
= & (\delta_{h,K_{\ast}^{-}}(y), \nabla_x G(x,x_{\ast}))_{K_{\ast}^{-}}.
\end{align}
\end{subequations}
The above identities (\ref{Taylor_expansions1}), (\ref{Taylor_expansions2}), and (\ref{Taylor_zero_terms}), together
with (\ref{ddd:2}), (\ref{ee-rG23-proof:ad1})-(\ref{Taylor_expansions_ad1}), (4) in Lemma \ref{curve-assumption} and
(\ref{Taylor_points_props}), lead to
\begin{align*}
\left|\Gamma_{{\rm if}}(x_{\ast},x)\right|\leq&\int_{K_{\ast}^+}\left|\delta_{h,K_{\ast}^+}\right|
\left(\int_{0}^{1} \left|\nabla_y G(x,\boldsymbol{\eta}_{y}^{+}(s))\right|
\left|\frac{d \boldsymbol{\eta}_{y}^{+}(s)}{ds}\right|ds\right) dy\\
&+\int_{K_{\ast}^-}\left|\delta_{h,K_{\ast}^-}\right|
\left(\int_{0}^{1} \left|\nabla_y G(x,\boldsymbol{\eta}_{y}^{-}(s))\right|
\left|\frac{d \boldsymbol{\eta}_{y}^{-}(s)}{ds}\right|ds\right) dy\\
\leq& C\int_{K_{\ast}^+}h\left|\delta_{h,K_{\ast}^+}\right|
\left(\int_{0}^{1}\left| x - \boldsymbol{\eta}_{y}^{+}(s) \right|^{1-d} ds \right) dy\\
&+C\int_{K_{\ast}^-}h\left|\delta_{h,K_{\ast}^-}\right|
\left(\int_{0}^{1}\left| x - \boldsymbol{\eta}_{y}^{-}(s) \right|^{1-d} ds \right)dy\leq Chd_j^{1-d},
\end{align*}
and
\begin{align*}
\left| \nabla_{x} \Gamma_{{\rm if}}(x_{\ast},x)\right|\leq&\int_{K_{\ast}^+}\left|\delta_{h,K_{\ast}^+}\right|
\left(\int_{0}^{1} \left| \nabla_{x} \nabla_y G(x,\boldsymbol{\eta}_{y}^{+}(s))\right|
\left|\frac{d \boldsymbol{\eta}_{y}^{+}(s)}{ds}\right|ds\right) dy\\
&+\int_{K_{\ast}^-}\left|\delta_{h,K_{\ast}^-}\right|
\left(\int_{0}^{1} \left| \nabla_{x} \nabla_y G(x,\boldsymbol{\eta}_{y}^{-}(s))\right|
\left|\frac{d \boldsymbol{\eta}_{y}^{-}(s)}{ds}\right|ds\right) dy\\
\leq& C\int_{K_{\ast}^+}h\left|\delta_{h,K_{\ast}^+}\right|
\left(\int_{0}^{1}\left| x - \boldsymbol{\eta}_{y}^{+}(s) \right|^{-d} ds \right) dy\\
&+C\int_{K_{\ast}^-}h\left|\delta_{h,K_{\ast}^-}\right|
\left(\int_{0}^{1}\left| x - \boldsymbol{\eta}_{y}^{-}(s) \right|^{-d} ds \right)dy\leq Chd_j^{-d}.
\end{align*}
Therefore, the above two estimates and (\ref{ee-rG23-proof:1}) imply
\begin{align*}
|\Gamma_{{\rm if}}|_{H^2(\Omega_j^2(x_{\ast}))}\leq Chd_j^{-1-\frac{d}{2}}.
\end{align*}

Let $\varphi_1\in\mathcal{C}_0^{\infty}(\Omega_j^3(x^{\ast}))$ satisfy $\varphi_1\equiv1$ in $\Omega_j^2(x^{\ast})$
with $|\varphi_1|_{W^{l,\infty}(\Omega)}\leq Cd_j^{-l}$ for $l=0,1,2$. Then, similar to (\ref{ee-rG23-proof:1}), we have
\begin{align}
|\Gamma_{{\rm bf}}|_{H^2(\Omega_j^2(x^{\ast}))}\leq C\left(d_j^{-2}\|\Gamma_{{\rm bf}}\|_{L^2(\Omega_j^3(x^{\ast}))}+d_j^{-1}\|\nabla\Gamma_{{\rm bf}}\|_{L^2(\Omega_j^3(x^{\ast}))}\right).\label{ee-rG23-proof:2}
\end{align}
For any $x\in\Omega_j^3(x^{\ast})$, according to the definitions of $G$ (see (\ref{G})) and $\delta_x$ we obtain
\begin{align}
\Gamma_{{\rm bf}}(x^{\ast},x)=(\delta_{h,x^{\ast}}(y), G(x,y))_{K^{\ast}},\label{ee-rG23-proof:3}
\end{align}
and
\begin{align}
\nabla_x\Gamma_{{\rm bf}}(x^{\ast},x)=(\delta_{h,x^{\ast}}(y), \nabla_xG(x,y))_{K^{\ast}}.\label{ee-rG23-proof:4}
\end{align}
From (\ref{pe-G}) and the fact that $x \in \Omega_{j}^{3}(x^{\ast})$, $G(x,y)$ and $\nabla_xG(x,y)$ are
differentiable with respect to the second variable $y \in K^{\ast}$. For any $y \in K^{\ast}$, we denote by
$\boldsymbol{\eta}_{y}$ a curve satisfying (2), (3), (4) in Lemma \ref{curve-assumption} with $\boldsymbol{\eta}_{y}(0) = x^{\ast}$
and $\boldsymbol{\eta}_{y}(1) = y$. Then,
\begin{align*}
G(x,y)= G(x, x^{\ast}) +  \int_{0}^{1} \nabla_{y} G(x,\boldsymbol{\eta}_{y}(s))\cdot
\frac{d\boldsymbol{\eta}_{y}(s)}{ds}ds,\quad \forall y\in K^{\ast},
\end{align*}
and
\begin{align*}
\nabla_x G(x,y)= \nabla_{x} G(x,x^{\ast}) + \int_{0}^{1} \nabla_x\left(\nabla_yG(x,\boldsymbol{\eta}_{y}(s))
\cdot \frac{d \boldsymbol{\eta}_{y}(s)}{ds} \right) ds ,\quad \forall y\in K^{\ast}.
\end{align*}
Since $G(x,x^{\ast})=0$ for any $x\in \Omega_{j}^{3}(x^{\ast})$ and $x^{\ast}\in F$ with $F\in \mathcal{E}_h^{\partial}$,
\begin{align*}
\nabla_{x} G(x,x^{\ast}) = \boldsymbol{0}, \quad \forall x\in \Omega_{j}^{3}(x^{\ast}).
\end{align*}
Therefore we have
\begin{align}
\label{boundary_eq1}
G(x,y)= \int_{0}^{1} \nabla_{y} G(x,\boldsymbol{\eta}_{y}(s))\cdot
\frac{d\boldsymbol{\eta}_{y}(s)}{ds}ds,\quad \forall y\in K^{\ast},
\end{align}
and
\begin{align}
\label{boundary_eq2}
\nabla_x G(x,y)= \int_{0}^{1} \nabla_x\left(\nabla_yG(x,\boldsymbol{\eta}_{y}(s))
\cdot \frac{d \boldsymbol{\eta}_{y}(s)}{ds} \right) ds ,\quad \forall y\in K^{\ast}.
\end{align}
Moreover, Lemma \ref{curve-assumption} implies that for any $s \in (0,1)$ and any $x\in \Omega_{j}^{3}(x^{\ast})$,
\begin{align}
\label{distance_x_y_boundary}
\boldsymbol{\eta}_{y}(s) \in K^{\ast} \quad \forall y \in K^{\ast}, \quad \text{ and } \quad
\vert x - \boldsymbol{\eta}_{y}(s) \vert \geq Cd_{j}\quad \forall y \in K^{\ast},
\end{align}
where the constant $C$ is independent of the mesh size $h$, $x^{\ast} \in \Omega_{j}^{3}(x^{\ast})$
and $y \in K^{\ast}$.
The above two identities (\ref{boundary_eq1}) and (\ref{boundary_eq2}), together with (\ref{pe-G}),
(\ref{ddd:2}), (\ref{ee-rG23-proof:3}), (\ref{ee-rG23-proof:4}), (4) in Lemma \ref{curve-assumption} and
(\ref{distance_x_y_boundary}), result in
\begin{align*}
\left|\Gamma_{{\rm bf}}(x^{\ast},x)\right|\leq&\int_{K^{\ast}}\left|\delta_{h,x^{\ast}}\right|
\left( \int_{0}^{1} \left|\nabla_{y} G(x, \boldsymbol{\eta}_{y}(s) ))\right|
\left| \frac{d \boldsymbol{\eta}_{y}(s)}{ds} \right| ds \right) dy\\
\leq& C\int_{K^{\ast}}h\left|\delta_{h,x^{\ast}}\right|
\left(\int_{0}^{1}\left| x - \boldsymbol{\eta}_{y}(s) \right|^{1-d}ds \right) dy\leq Chd_j^{1-d},
\end{align*}
and
\begin{align*}
\left|\nabla_x\Gamma_{{\rm bf}}(x^{\ast},x)\right|\leq&\int_{K^{\ast}}\left|\delta_{h,x^{\ast}}\right|
\left( \int_{0}^{1} \left| \nabla_{x} \nabla_{y} G(x, \boldsymbol{\eta}_{y}(s) ))\right|
\left| \frac{d \boldsymbol{\eta}_{y}(s)}{ds} \right| ds \right) dy\\
\leq& C\int_{K^{\ast}}h\left|\delta_{h,x^{\ast}}\right|
\left(\int_{0}^{1}\left| x - \boldsymbol{\eta}_{y}(s) \right|^{-d}ds \right) dy\leq Chd_j^{-d}.
\end{align*}
Thus, combining (\ref{ee-rG23-proof:2}) and the above two estimates, we get
\begin{align*}
|\Gamma_{{\rm bf}}|_{H^2(\Omega_j^2(x^{\ast}))}\leq Chd_j^{-1-\frac{d}{2}}.
\end{align*}
\end{proof}

\subsubsection{The final step of the proof of Theorem \ref{mn-jump}}
\label{sec3:3.7}

Based on Lemma \ref{ee-rG23}, we prove the following Lemma~\ref{rG23-L1}, which, together with (\ref{rG2:1}), (\ref{rG2:2}), (\ref{rG3:1}), (\ref{rG3:2}) and the fact that $(\Pi_{k}u-u,1)_{K}=(\Pi_{V}u-u,1)_{K}=0$ for any $K \in \mathcal{T}_{h}$, leads to the desired results in Theorem \ref{mn-jump}.
\begin{lemma}\label{rG23-L1}
Let $(\boldsymbol{\mathcal{R}}_{{\rm if}},\Gamma_{{\rm if}})$ and $(\boldsymbol{\mathcal{R}}_{{\rm bf}},\Gamma_{{\rm bf}})$ be the interface Green's functions defined in (\ref{rG2-mixed}) and (\ref{rG3-mixed}). Let $\Gamma_{{\rm if},h}^{{\rm DG}}$, $\Gamma_{{\rm bf},h}^{{\rm DG}}$, $(\boldsymbol{\mathcal{R}}_{{\rm if},h},\Gamma_{{\rm if},h},\widehat{\Gamma}_{{\rm if},h})$ and $(\boldsymbol{\mathcal{R}}_{{\rm bf},h},\Gamma_{{\rm bf},h},\widehat{\Gamma}_{{\rm bf},h})$ be their IPDG and HDG approximations defined in (\ref{rG2-DG}), (\ref{rG3-DG}), (\ref{rG2-HDG}), and (\ref{rG3-HDG}).
Then, for ${\rm s\in\{if, bf\}}$ we have
\begin{align}
\sum_{F\in\mathcal{E}_h}\left(h_F\|\{\nabla(\Gamma_{{\rm s}}-\Gamma_{{\rm s},h}^{{\rm DG}})\cdot{\bm n}\}\|_{L^1(F)}+\|[\Gamma_{{\rm s}}-\Gamma_{{\rm s},h}^{{\rm DG}}]\|_{L^1(F)}\right)+\|\nabla(\Gamma_{{\rm s}}-\Gamma_{{\rm s},h}^{{\rm DG}})\|_{L_h^1(\Omega)}\leq Ch,\label{rG23-L1-DG}
\end{align}
and
\begin{align}
&\sum_{K\in\mathcal{T}_h}\|\Gamma_{{\rm s},h}-\widehat{\Gamma}_{{\rm s},h}\|_{L^1(\partial K)}
+\|\boldsymbol{\mathcal{R}}_{{\rm s}}-\boldsymbol{\mathcal{R}}_{{\rm s},h}\|_{L^1(\Omega)}\nonumber\\
&+\|\nabla(\Gamma_{{\rm s}}-\Gamma_{{\rm s},h})\|_{L_h^1(\Omega)}+h\|\nabla(\boldsymbol{\mathcal{R}}_{{\rm s}}-\boldsymbol{\mathcal{R}}_{{\rm s},h})\|_{L_h^1(\Omega)}\leq Ch.\label{rG23-L1-HDG}
\end{align}
\end{lemma}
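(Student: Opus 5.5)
We follow the standard dyadic-decomposition argument for weighted $L^1$ Green's-function error bounds (as in \cite{cc2005,lc2026}), with the classical local energy estimate replaced by Lemma \ref{ee-rG23}. Write $e=\Gamma_{\rm s}-\Gamma_{{\rm s},h}^{\rm DG}$ (resp.\ the HDG errors), fix $z=x_\ast$ or $z=x^\ast$, and split $\Omega=\Omega_\ast(z)\cup\bigcup_{j=0}^I\Omega_j(z)$.

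\textbf{Near region.} On $\Omega_\ast(z)$ we use Cauchy--Schwarz, $|\Omega_\ast|^{1/2}\le C(Mh)^{d/2}$, and the global energy estimate of Lemma \ref{leee}. For the IPDG case the right-hand side of Lemma \ref{leee} is bounded by $Ch|\Gamma_{\rm s}|_{H^2(\Omega)}\le Ch\|\delta_{h,K_\ast^+}-\delta_{h,K_\ast^-}\|_{L^2}\le Ch^{1-d/2}$, using convexity ($H^2$-regularity) and (\ref{ddd:2}); the HDG case is analogous. The $L^1$ contribution of $\Omega_\ast$ is then $C M^{d/2} h$. Face terms are converted to element terms by trace inequalities, which is why each face quantity carries the weight $h_F$ or the jump.

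\textbf{Annuli.} On each $\Omega_j$, Cauchy--Schwarz gives a factor $d_j^{d/2}$ times the local energy of the error on $\Omega_j$. Lemma \ref{lee} with $\Omega_0=\Omega_j$, $\Omega_1=\Omega_j^1$ and $\rho\approx d_j$ bounds this energy by the approximation terms $h|\Gamma_{\rm s}|_{H^2(\Omega_j^2)}$ (using local approximation estimates for $\Pi_k,\Pi_V,\Pi_W$), plus the pollution term $d_j^{-1}\|e\|_{L^2(\Omega_j^1)}$. The HDG bound also carries $h\rho^{-1}$ times energy on $\Omega_j^1$, which is absorbed by iteration since $h/d_j\le 1/M$ is small.

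Lemma \ref{ee-rG23} gives $h|\Gamma_{\rm s}|_{H^2(\Omega_j^2)}\le Ch^2d_j^{-1-d/2}$. Multiplying by $d_j^{d/2}$ yields $Ch^2d_j^{-1}$, and summing over $j$ gives $Ch^2\sum_j d_j^{-1}\le Ch^2/(Mh)=Ch/M$, with no logarithm. This is exactly where the extra factor of $h$ is used: the classical bound would give $\sum_j h\,d_j^0\approx h|\ln h|$.

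\textbf{Pollution term (main obstacle).} We must show $\sum_j d_j^{d/2-1}\|e\|_{L^2(\Omega_j^1)}\le Ch$. We would use a local duality argument: for $\psi$ supported in $\Omega_j^1$ with $\|\psi\|_{L^2}=1$, solve $-\Delta w=\psi$, so that $(e,\psi)$ is rewritten through Galerkin orthogonality. The resulting dual-error pairing is split again over the dyadic annuli. Near $z$ we use the decay of $w$ away from its support via (\ref{pe-G}), giving $|w|_{H^2(\Omega_i)}\lesssim d_j^{-d/2}\cdots$, together with the global $H^2$ bound; on the other annuli we apply Lemma \ref{ee-rG23} again, and for $\Gamma_{\rm bf}$ we also use vanishing boundary traces.

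This produces a recursive inequality of the form
\[
d_j^{d/2-1}\|e\|_{L^2(\Omega_j^1)}\le C\frac{h}{d_j}\Bigl(\text{energy-weighted sums}\Bigr)+\text{summable terms}.
\]
Summing over $j$ and choosing $M$ large absorbs the error sum into the left-hand side, which gives (\ref{rG23-L1-DG}).

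For (\ref{rG23-L1-HDG}), the terms $\|\Gamma_{{\rm s},h}-\widehat\Gamma_{{\rm s},h}\|_{L^1(\partial K)}$ and $h\|\nabla(\boldsymbol{\mathcal R}_{\rm s}-\boldsymbol{\mathcal R}_{{\rm s},h})\|_{L^1_h}$ are handled by Lemma \ref{inequality} with local inverse and approximation estimates, reducing them to the quantities already controlled. The delicate point is keeping the duality constants independent of $j$ so that no $|\ln h|$ reappears.
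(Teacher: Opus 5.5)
Your near-field bound and your annulus step match the paper (global energy on $\Omega_\ast(z)$; Lemma \ref{lee} with Lemma \ref{ee-rG23}, giving $Ch^2\sum_j d_j^{-1}\le Ch/M$), and so does the final absorption with $M$ large. The gap is in the pollution term. In an annulus-by-annulus pairing of $a_h(w-\Pi_k w,e)$, the region near $z$ is decisive. If you use only $|w|_{H^2(\Omega)}\le C$ there, the $\Omega_\ast(z)$ piece alone contributes $d_j^{d/2-1}h\,\mathcal{K}_{\Omega_\ast(z)}\lesssim d_j^{d/2-1}h^{2-d/2}$ for every $j$. Summed over $j$, this is $Ch^{1/2}$ for $d=3$ and $Ch|\ln h|$ for $d=2$. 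So your argument needs a decay estimate $|w|_{H^2(B(z,\rho)\cap\Omega)}\lesssim(\rho/d_j)^{\beta}$ for $Mh\le\rho\ll d_j$, with some $\beta>d/2-1$.

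You attribute this decay to (\ref{pe-G}), but it does not follow from it. (\ref{pe-G}) controls $G$, its first derivatives, and the mixed derivative $\nabla_z\nabla_x G$, but never $D_x^2G$. Inserting the pointwise bounds on $w$ and $\nabla w$ into a cutoff argument at scale $\rho$ gives a bound that grows as $\rho\to 0$. Moreover, the natural pointwise version $|D^2 w|\lesssim d_j^{-d/2}$ near $z$ is false on a convex polyhedron. Near an edge with dihedral angle $\omega\in(\pi/2,\pi)$, $D^2w\sim r^{\pi/\omega-2}$ is unbounded, and $z$ may lie on or near $\partial\Omega$ (for $\Gamma_{\rm bf}$ it always lies on $\partial\Omega$).

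The missing ingredient is the H\"older estimate (\ref{pe-G:1}). It gives $|\nabla w(x)-\nabla w(x')|\lesssim |x-x'|^{\gamma}d_j^{1-\gamma-d/2}$ near $z$. A boundary-adapted Caccioppoli argument (subtracting an affine function compatible with the Dirichlet condition on the nearby faces) then yields $\beta=\gamma+d/2-1$, which is enough to close your recursion.

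The paper avoids localized $H^2$ decay of the dual solution altogether. It splits $a_h(W-\Pi_kW,e)$ only into two parts:
\begin{itemize}
\item On $\Omega_j^3(x_\ast)$, the pairing is bounded by $Ch\,\mathcal{K}_{\Omega_j^3(x_\ast)}$.
\item On the complement, \cite[Lemma 3.8]{lc2026} and \cite[Lemma 4.1]{glrs2009}, which rest on (\ref{pe-G:1}), give $\|\nabla(W-\Pi_kW)\|_{L_h^\infty}+h^{-1}\|W-\Pi_kW\|_{L^\infty}\le Ch^{\gamma}d_j^{1-\gamma-d/2}$.
\end{itemize}
The second bound is paired with the global weighted $L^1$ error, i.e., the left-hand side of (\ref{rG23-L1-DG}) itself. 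That error is then kicked back using $\sum_j (h/d_j)^{\gamma}\le CM^{-\gamma}$. This is where the H\"older exponent, rather than a full power of $h/d_j$, enters. Finally, the vanishing of $G(x,\cdot)$ at $x^\ast$ is used only inside Lemma \ref{ee-rG23}; the duality step is identical for ${\rm s=if}$ and ${\rm s=bf}$.
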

\begin{proof}
For the result (\ref{rG23-L1-DG}), we only prove the case of ${\rm s=if}$, since the case of ${\rm s=bf}$ can be proved similarly.
Using (\ref{ddd:2}) and Lemma \ref{leee}, we obtain
\begin{align}
&\sum_{F\in\mathcal{E}_h}\left(h_F\|\{\nabla(\Gamma_{{\rm if}}-\Gamma_{{\rm if},h}^{{\rm DG}})\cdot{\bm n}\}\|_{L^1(F)}+\|[\Gamma_{{\rm if}}-\Gamma_{{\rm if},h}^{{\rm DG}}]\|_{L^1(F)}\right)\nonumber\\
&+\|\nabla(\Gamma_{{\rm if}}-\Gamma_{{\rm if},h}^{{\rm DG}})\|_{L_h^1(\Omega)}\leq Ch^{\frac{d}{2}}\mathcal{K}_{\Omega_{\ast}(x_{\ast})}
+C\sum_{j=0}^Id_j^{\frac{d}{2}}\mathcal{K}_{\Omega_j(x_{\ast})}\label{rG23-L1-DG-proof:1}\\
\leq& Ch^{1+\frac{d}{2}}|\Gamma_{{\rm if}}|_{H^2(\Omega)}+C\sum_{j=0}^Id_j^{\frac{d}{2}}\mathcal{K}_{\Omega_j(x_{\ast})}\leq Ch+C\sum_{j=0}^Id_j^{\frac{d}{2}}\mathcal{K}_{\Omega_j(x_{\ast})},\nonumber
\end{align}
where
\begin{align*}
\mathcal{K}_{\Omega_j(x_{\ast})}=&\Big(\|\nabla(\Gamma_{{\rm if}}-\Gamma_{{\rm if},h}^{{\rm DG}})\|_{L_h^2(\Omega_j(x_{\ast}))}+
\Big(\sum_{F\in\mathcal{E}_h}\big(h_F\|\{\nabla(\Gamma_{{\rm if}}-\Gamma_{{\rm if},h}^{{\rm DG}})\cdot{\bm n}\}\|^2_{L^2(F\cap\Omega_j(x_{\ast}))}\\
&+h^{-1}_F\|[\Gamma_{{\rm if}}-\Gamma_{{\rm if},h}^{{\rm DG}}]\|^2_{L^2(F\cap\Omega_j(x_{\ast}))}\big)\Big)^{1/2}\Big).
\end{align*}

Using Lemma \ref{lee}, Lemma \ref{ee-rG23}, and interpolation error estimates, we have
\begin{align}
\sum_{j=0}^Id_j^{\frac{d}{2}}\mathcal{K}_{\Omega_j(x_{\ast})}\leq& C\sum_{j=0}^Ihd_j^{\frac{d}{2}}|\Gamma_{{\rm if}}|_{H^2(\Omega_j^2(x_{\ast}))}+C\sum_{j=0}^Id_j^{\frac{d}{2}-1}\|\Gamma_{{\rm if}}-\Gamma_{{\rm if},h}^{{\rm DG}}\|_{L^2(\Omega_j^1(x_{\ast}))}\nonumber\\
\leq& Ch\sum_{j=0}^Ihd_j^{-1}+C\sum_{j=0}^Id_j^{\frac{d}{2}-1}\|\Gamma_{{\rm if}}-\Gamma_{{\rm if},h}^{{\rm DG}}\|_{L^2(\Omega_j^1(x_{\ast}))}\label{rG23-L1-DG-proof:2}\\
\leq& Ch+C\sum_{j=0}^Id_j^{\frac{d}{2}-1}\|\Gamma_{{\rm if}}-\Gamma_{{\rm if},h}^{{\rm DG}}\|_{L^2(\Omega_j^1(x_{\ast}))}.\nonumber
\end{align}
We now estimate $\|\Gamma_{{\rm if}}-\Gamma_{{\rm if},h}^{{\rm DG}}\|_{L^2(\Omega_j^1(x_{\ast}))}$. To this end, let $W$ be the solution of
\begin{align*}
-\Delta W= f\quad {\rm in}~\Omega,\quad W=0\quad {\rm on}~\partial\Omega,
\end{align*}
for any $f\in\mathcal{C}_0^{\infty}(\Omega_j^1(x_{\ast}))$ with $\|f\|_{L^2(\Omega_j^1(x_{\ast}))}=1$. Then, the definition of $a_h$ and Lemma \ref{dfp} yield
\begin{align*}
(f,\Gamma_{{\rm if}}-\Gamma_{{\rm if},h}^{{\rm DG}})=a_h(W,\Gamma_{{\rm if}}-\Gamma_{{\rm if},h}^{{\rm DG}})=a_h(W-\Pi_k W,
\Gamma_{{\rm if}}-\Gamma_{{\rm if},h}^{{\rm DG}}).
\end{align*}
For the above equality, we estimate the right-hand side on the subdomains $\Omega_j^3(x_{\ast})$ and
$\Omega\backslash\Omega_j^3(x_{\ast})$ separately. On the subdomain $\Omega_j^3(x_{\ast})$, the interpolation error estimates yield
\begin{align*}
(f,\Gamma_{{\rm if}}-\Gamma_{{\rm if},h}^{{\rm DG}})|_{\Omega_j^3(x_{\ast})}\leq Ch\mathcal{K}_{\Omega_j^3(x_{\ast})}.
\end{align*}
On the subdomain $\Omega\backslash\Omega_j^3(x_{\ast})$, \cite[Lemma 3.8]{lc2026} and the interpolation error estimates presented in \cite[Lemma 4.1]{glrs2009} yield
\begin{align*}
&(f,\Gamma_{{\rm if}}-\Gamma_{{\rm if},h}^{{\rm DG}})|_{\Omega\backslash\Omega_j^3(x_{\ast})}\\
\leq &C\left(\|\nabla(W-\Pi_kW)\|_{L_h^{\infty}(\Omega\backslash\Omega_j^3(x_{\ast}))}+h^{-1}\|W-\Pi_kW\|_{L^{\infty}(\Omega\backslash\Omega_j^3(x_{\ast}))}\right)\times\\
&\Big(\sum_{F\in\mathcal{E}_h}\left(
\|[\Gamma_{{\rm if}}-\Gamma_{{\rm if},h}^{{\rm DG}}]\|_{L^1(F\cap\Omega\backslash\Omega_j^3(x_{\ast}))}+h_F\|\{\nabla(\Gamma_{{\rm if}}-\Gamma_{{\rm if},h}^{{\rm DG}})\cdot{\bm n}\}\|_{L^1(F\cap\Omega\backslash\Omega_j^3(x_{\ast}))}\right)\\
&+\|\nabla(\Gamma_{{\rm if}}-\Gamma_{{\rm if},h}^{{\rm DG}})\|_{L_h^1(\Omega\backslash\Omega_j^3(x_{\ast}))}\Big)\\
\leq&Ch^{\gamma}d_j^{1-\gamma-\frac{d}{2}}\Big(\sum_{F\in\mathcal{E}_h}\left(
\|[\Gamma_{{\rm if}}-\Gamma_{{\rm if},h}^{{\rm DG}}]\|_{L^1(F)}+h_F\|\{\nabla(\Gamma_{{\rm if}}-\Gamma_{{\rm if},h}^{{\rm DG}})\cdot{\bm n}\}\|_{L^1(F)}\right)\\
&+\|\nabla(\Gamma_{{\rm if}}-\Gamma_{{\rm if},h}^{{\rm DG}})\|_{L_h^1(\Omega)}\Big).
\end{align*}
Therefore, using the duality argument, Lemma \ref{leee}, and (\ref{ddd:2}), we obtain
\begin{align*}
&\sum_{j=0}^Id_j^{\frac{d}{2}-1}\|\Gamma_{{\rm if}}-\Gamma_{{\rm if},h}^{{\rm DG}}\|_{L^2(\Omega_j^1(x_{\ast}))}\\
\leq&Ch^{\frac{d}{2}}\mathcal{K}_{\Omega_{\ast}(x_{\ast})}+ C\sum_{j=0}^Ihd_j^{-1}d_j^{\frac{d}{2}}\mathcal{K}_{\Omega_j(x_{\ast})}+C\sum_{j=0}^I(hd_j^{-1})^{\gamma}\Big(\|\nabla(\Gamma_{{\rm if}}-\Gamma_{{\rm if},h}^{{\rm DG}})\|_{L_h^1(\Omega)}\\
&+\sum_{F\in\mathcal{E}_h}\left(\|[\Gamma_{{\rm if}}-\Gamma_{{\rm if},h}^{{\rm DG}}]\|_{L^1(F)}+h_F\|\{\nabla(\Gamma_{{\rm if}}-\Gamma_{{\rm if},h}^{{\rm DG}})\cdot{\bm n}\}\|_{L^1(F)}\right)\Big),
\end{align*}
which, together with (\ref{rG23-L1-DG-proof:1}) and (\ref{rG23-L1-DG-proof:2}), leads to the desired result (\ref{rG23-L1-DG}) by setting $M$ to be large enough.

Next, we prove the result (\ref{rG23-L1-HDG}) for the case of ${\rm s=if}$, since the case of ${\rm s=bf}$ can be proved similarly.
First of all, through the definitions of $\Pi_W$, $\Pi_V$, $\Pi_{\widehat{V}}$ and $\Pi_k$, we have the following error equation:
\begin{align}
(\Pi_W\boldsymbol{\mathcal{R}}_{{\rm if}}-\boldsymbol{\mathcal{R}}_{{\rm if},h},{\bm w}_h)_{\mathcal{T}_h}-
(\Pi_V\Gamma_{{\rm if}}-\Gamma_{{\rm if},h},\nabla\cdot{\bm w}_h)_{\mathcal{T}_h}+\langle\Pi_{\widehat{V}}\Gamma_{{\rm if}}-\widehat{\Gamma}_{{\rm if},h},{\bm w}_h\cdot{\bm n}\rangle_{\partial\mathcal{T}_h}=0,\label{rG23-L1-HDG-proof:1}
\end{align}
for any ${\bm w}_h\in {\bm W}_h$, and the following result:
\begin{align*}
h^{-1}_K\|\Gamma_{{\rm if},h}-\widehat{\Gamma}_{{\rm if},h}\|^2_{L^2(\partial K)}=&h_K^{-1}\langle(\Pi_{\widehat{V}}\Gamma_{{\rm if}}-\widehat{\Gamma}_{{\rm if},h})-(\Pi_V\Gamma_{{\rm if}}-\Gamma_{{\rm if},h}),\Gamma_{{\rm if},h}-\widehat{\Gamma}_{{\rm if},h}\rangle_{\partial K}\\
&+h_K^{-1}\langle\Pi_V\Gamma_{{\rm if}}-\Pi_{\widehat{V}}\Gamma_{{\rm if}},\Gamma_{{\rm if},h}-\widehat{\Gamma}_{{\rm if},h}\rangle_{\partial K}\\
\leq&Ch_K^{-1/2}\|\Gamma_{{\rm if},h}-\widehat{\Gamma}_{{\rm if},h}\|_{L^2(\partial K)}\Big(h_K^{-1/2}\|(\Pi_{\widehat{V}}\Gamma_{{\rm if}}-\widehat{\Gamma}_{{\rm if},h})-(\Pi_V\Gamma_{{\rm if}}-\Gamma_{{\rm if},h})\|_{L^2(\partial K)}\\
&+h^{-1}\|\Gamma_{{\rm if}}-\Pi_V\Gamma_{{\rm if}}\|_{L^2(K)}+\|\nabla(\Gamma_{{\rm if}}-\Pi_V\Gamma_{{\rm if}})\|_{L^2(K)}\\
&+h^{-1}\|\Gamma_{{\rm if}}-\Pi_k\Gamma_{{\rm if}}\|_{L^2(K)}+\|\nabla(\Gamma_{{\rm if}}-\Pi_k\Gamma_{{\rm if}})\|_{L^2(K)}\Big).
\end{align*}
Applying Lemma \ref{inequality} to (\ref{rG23-L1-HDG-proof:1}), the above inequality arrives at
\begin{align}
h^{-1/2}_K\|\Gamma_{{\rm if},h}-\widehat{\Gamma}_{{\rm if},h}\|_{L^2(\partial K)}\leq& C\Big(\|\Pi_W\boldsymbol{\mathcal{R}}_{{\rm if}}-\boldsymbol{\mathcal{R}}_{{\rm if},h}\|_{L^2(K)}+\|\sqrt{\tau}(\mathcal{L}\Gamma_{{\rm if},h}-\widehat{\Gamma}_{{\rm if},h})\|_{L^2(\partial K)}\nonumber\\
&+h^{-1}\|\Gamma_{{\rm if}}-\Pi_V\Gamma_{{\rm if}}\|_{L^2(K)}+\|\nabla(\Gamma_{{\rm if}}-\Pi_V\Gamma_{{\rm if}})\|_{L^2(K)}\label{rG23-L1-HDG-proof:2}\\
&+h^{-1}\|\Gamma_{{\rm if}}-\Pi_k\Gamma_{{\rm if}}\|_{L^2(K)}+\|\nabla(\Gamma_{{\rm if}}-\Pi_k\Gamma_{{\rm if}})\|_{L^2(K)}\Big).\nonumber
\end{align}
Then, using (\ref{ddd:2}), (\ref{rG23-L1-HDG-proof:2}), Lemma \ref{leee}, and interpolation error estimates, we obtain
\begin{align}
&\sum_{K\in\mathcal{T}_h}\|\Gamma_{{\rm if},h}-\widehat{\Gamma}_{{\rm if},h}\|_{L^1(\partial K)}+\|\boldsymbol{\mathcal{R}}_{{\rm if}}-\boldsymbol{\mathcal{R}}_{{\rm if},h}\|_{L^1(\Omega)}+\|\nabla(\Gamma_{{\rm if}}-\Gamma_{{\rm if},h})\|_{L_h^1(\Omega)}\nonumber\\
&+h\|\nabla(\boldsymbol{\mathcal{R}}_{{\rm if}}-\boldsymbol{\mathcal{R}}_{{\rm if},h})\|_{L_h^1(\Omega)}
\leq Ch^{\frac{d}{2}}\mathcal{J}_{\Omega_{\ast}(x_{\ast})}+C\sum_{j=0}^I d_j^{\frac{d}{2}}\mathcal{J}_{\Omega_j(x_{\ast})}\label{rG23-L1-HDG-proof:3}\\
\leq& Ch^{1+\frac{d}{2}}|\Gamma_{{\rm if}}|_{H^2(\Omega)}+C\sum_{j=0}^I d_j^{\frac{d}{2}}\mathcal{J}_{\Omega_j(x_{\ast})}\leq Ch+C\sum_{j=0}^I d_j^{\frac{d}{2}}\mathcal{J}_{\Omega_j(x_{\ast})},\nonumber
\end{align}
where
\begin{align*}
\mathcal{J}_{\Omega_j(x_{\ast})}=&\Big(\sum_{K\in\mathcal{T}_h}\|\sqrt{\tau}(\mathcal{L}\Gamma_{{\rm if},h}-\widehat{\Gamma}_{{\rm if},h})\|_{L^2(\partial K\cap\Omega_j(x_{\ast}))}^2\Big)^{1/2}+\|\boldsymbol{\mathcal{R}}_{{\rm if}}-\boldsymbol{\mathcal{R}}_{{\rm if},h}\|_{L^2(\Omega_j(x_{\ast}))}\\
&+\|\nabla(\Gamma_{{\rm if}}-\Gamma_{{\rm if},h})\|_{L_h^2(\Omega_j(x_{\ast}))}+h\|\nabla(\boldsymbol{\mathcal{R}}_{{\rm if}}-\boldsymbol{\mathcal{R}}_{{\rm if},h})\|_{L_h^2(\Omega_j(x_{\ast}))}+\|\boldsymbol{\mathcal{R}}_{{\rm if}}-\Pi_W\boldsymbol{\mathcal{R}}_{{\rm if}}\|_{L^2(\Omega_j(x_{\ast}))}\\
&+h^{-1}\|\Gamma_{{\rm if}}-\Pi_V\Gamma_{{\rm if}}\|_{L^2(\Omega_j(x_{\ast}))}+\|\nabla(\Gamma_{{\rm if}}-\Pi_V\Gamma_{{\rm if}})\|_{L^2(\Omega_j(x_{\ast}))}\\
&+h^{-1}\|\Gamma_{{\rm if}}-\Pi_k\Gamma_{{\rm if}}\|_{L^2(\Omega_j(x_{\ast}))}+\|\nabla(\Gamma_{{\rm if}}-\Pi_k\Gamma_{{\rm if}})\|_{L_h^2(\Omega_j(x_{\ast}))}.
\end{align*}
By virtue of Lemma \ref{lee}, Lemma \ref{ee-rG23}, and interpolation error estimates, we have
\begin{align}
\sum_{j=0}^Id_j^{\frac{d}{2}}\mathcal{J}_{\Omega_j(x_{\ast})}\leq& C\sum_{j=0}^Ihd_j^{\frac{d}{2}}|\Gamma_{{\rm if}}|_{H^2(\Omega_j^2(x_{\ast}))}
+C\sum_{j=0}^I(hd_j^{-1})d_j^{\frac{d}{2}}\mathcal{J}_{\Omega_j^1(x_{\ast})}\nonumber\\
&+C\sum_{j=0}^Id_j^{\frac{d}{2}-1}\|\Gamma_{{\rm if}}-\Gamma_{{\rm if},h}\|_{L^2(\Omega_j^1(x_{\ast}))}\label{rG23-L1-HDG-proof:4}\\
\leq& Ch+C\sum_{j=0}^I(hd_j^{-1})d_j^{\frac{d}{2}}\mathcal{J}_{\Omega_j(x_{\ast})}+C\sum_{j=0}^Id_j^{\frac{d}{2}-1}\|\Gamma_{{\rm if}}-\Gamma_{{\rm if},h}\|_{L^2(\Omega_j^1(x_{\ast}))}.\nonumber
\end{align}

To estimate $\|\Gamma_{{\rm if}}-\Gamma_{{\rm if},h}\|_{L^2(\Omega_j^1(x_{\ast}))}$, let $(\phi,\boldsymbol{\mathcal{A}})$ be the solution of
\begin{align*}
\boldsymbol{\mathcal{A}}+\nabla \phi=0\quad {\rm in}~\Omega,\quad \nabla\cdot\boldsymbol{\mathcal{A}}=f\quad {\rm in}~\Omega,\quad \phi=0\quad {\rm on}~\partial\Omega,
\end{align*}
for any $f\in \mathcal{C}_0^{\infty}(\Omega_j^1(x_{\ast}))$ with $\|f\|_{L^2(\Omega_j^1(x_{\ast}))}=1$. Then, the definition of $\mathcal{B}$ and Lemma \ref{dfp} yield
\begin{align*}
-(f,\Gamma_{{\rm if}}-\Gamma_{{\rm if},h})=&\mathcal{B}(\boldsymbol{\mathcal{A}},\phi,\phi;\boldsymbol{\mathcal{R}}_{{\rm if}}-\boldsymbol{\mathcal{R}}_{{\rm if},h},\Gamma_{{\rm if}}-\Gamma_{{\rm if},h},\Gamma_{{\rm if}}-\widehat{\Gamma}_{{\rm if},h})+\langle\tau(\mathcal{L}\phi-\phi),\mathcal{L}\Gamma_{{\rm if}}-\Gamma_{{\rm if}}\rangle_{\partial\mathcal{T}_h}\\
=&\mathcal{B}(\boldsymbol{\mathcal{A}}-\Pi_W\boldsymbol{\mathcal{A}},\phi-\Pi_V\phi,\phi-\Pi_{\widehat{V}}\phi;\boldsymbol{\mathcal{R}}_{{\rm if}}-\boldsymbol{\mathcal{R}}_{{\rm if},h},
\Gamma_{{\rm if}}-\Gamma_{{\rm if},h},\Gamma_{{\rm if}}-\widehat{\Gamma}_{{\rm if},h})\\
&+\langle\tau(\mathcal{L}\phi-\phi),\mathcal{L}\Gamma_{{\rm if}}-\Gamma_{{\rm if}}\rangle_{\partial\mathcal{T}_h}\\
=&(\boldsymbol{\mathcal{A}}-\Pi_W\boldsymbol{\mathcal{A}},\boldsymbol{\mathcal{R}}_{{\rm if}}-\boldsymbol{\mathcal{R}}_{{\rm if},h})+
(\nabla(\phi-\Pi_V\phi),\boldsymbol{\mathcal{R}}_{{\rm if}}-\boldsymbol{\mathcal{R}}_{{\rm if},h})_{\mathcal{T}_h}\\
&-\langle\phi-\Pi_V\phi,(\boldsymbol{\mathcal{R}}_{{\rm if}}-\boldsymbol{\mathcal{R}}_{{\rm if},h})\cdot{\bm n}\rangle_{\partial\mathcal{T}_h}+
(\boldsymbol{\mathcal{A}}-\Pi_W\boldsymbol{\mathcal{A}},\nabla(\Gamma_{{\rm if}}-\Gamma_{{\rm if},h}))_{\mathcal{T}_h}\\
&+\langle(\boldsymbol{\mathcal{A}}-\Pi_W\boldsymbol{\mathcal{A}})\cdot{\bm n},\Gamma_{{\rm if},h}-\widehat{\Gamma}_{{\rm if},h}\rangle_{\partial\mathcal{T}_h}
+\langle\tau(\Pi_{\widehat{V}}\phi-\mathcal{L}\Pi_V\phi),\mathcal{L}\Gamma_{{\rm if},h}-\widehat{\Gamma}_{{\rm if},h}\rangle_{\partial\mathcal{T}_h}
\end{align*}
Similar to the IPDG method, we estimate the right-hand side of the above equality on subdomains $\Omega_j^3(x_{\ast})$ and $\Omega\backslash\Omega_j^3(x_{\ast})$ separately. On subdomain $\Omega_j^3(x_{\ast})$, the interpolation error estimates and (\ref{rG23-L1-HDG-proof:2}) yield
\begin{align*}
-(f,\Gamma_{{\rm if}}-\Gamma_{{\rm if},h})|_{\Omega_j^3(x_{\ast})}\leq Ch\mathcal{J}_{\Omega_j^4(x_{\ast})}.
\end{align*}
On subdomain $\Omega\backslash\Omega_j^3(x_{\ast})$, \cite[Lemma 3.8]{lc2026} and the interpolation error estimates presented in \cite[Lemma 4.1]{glrs2009} yield
\begin{align*}
-(f,\Gamma_{{\rm if}}-\Gamma_{{\rm if},h})|_{\Omega\backslash\Omega_j^3(x_{\ast})}\leq& Ch^{\gamma}d_j^{1-\gamma-\frac{d}{2}}\Big(\|\boldsymbol{\mathcal{R}}_{{\rm if}}-\boldsymbol{\mathcal{R}}_{{\rm if},h}\|_{L^1(\Omega)}+h\|\nabla(\boldsymbol{\mathcal{R}}_{{\rm if}}-\boldsymbol{\mathcal{R}}_{{\rm if},h})\|_{L_h^1(\Omega)}\\
&+\|\nabla(\Gamma_{{\rm if}}-\Gamma_{{\rm if},h})\|_{L_h^1(\Omega)}+\sum_{K\in\mathcal{T}_h}\|\Gamma_{{\rm if},h}-\widehat{\Gamma}_{{\rm if},h}\|_{L^1(\partial K)}\Big).
\end{align*}
Thus, using the duality argument, Lemma \ref{leee}, and (\ref{ddd:2}), we obtain
\begin{align*}
&\sum_{j=0}^Id_j^{\frac{d}{2}-1}\|\Gamma_{{\rm if}}-\Gamma_{{\rm if},h}\|_{L^2(\Omega_j^1(x_{\ast}))}\\
\leq& C\sum_{j=0}^I(hd_j^{-1})^{\gamma}\Big(\|\boldsymbol{\mathcal{R}}_{{\rm if}}-\boldsymbol{\mathcal{R}}_{{\rm if},h}\|_{L^1(\Omega)}+h\|\nabla(\boldsymbol{\mathcal{R}}_{{\rm if}}-\boldsymbol{\mathcal{R}}_{{\rm if},h})\|_{L_h^1(\Omega)}+\|\nabla(\Gamma_{{\rm if}}-\Gamma_{{\rm if},h})\|_{L_h^1(\Omega)}\\
&+\sum_{K\in\mathcal{T}_h}\|\Gamma_{{\rm if},h}-\widehat{\Gamma}_{{\rm if},h}\|_{L^1(\partial K)}\Big)+Ch^{\frac{d}{2}}\mathcal{J}_{\Omega_j(x_{\ast})}+C\sum_{j=0}^I(hd_j^{-1})d_j^{\frac{d}{2}}\mathcal{J}
_{\Omega_j(x_{\ast})},
\end{align*}
which, together with (\ref{rG23-L1-HDG-proof:3}) and (\ref{rG23-L1-HDG-proof:4}), derives the desired result (\ref{rG23-L1-HDG}) by setting
$M$ to be large enough.
\end{proof}

\subsection{Proof of Corollary \ref{mn-HDG} and Corollary \ref{mn-HDG-1}}\label{sec3:5}
Firstly, we provide the proof of Corollary \ref{mn-HDG}.
Since $\tau=0$ for the hybrid mixed DG method, the result (\ref{mn-HDG:1}) naturally holds for this method.
For the HDG-1 and HDG-2 methods, we know from (\ref{HDG:3}) that on each $F\in\mathcal{E}_h^o$
\begin{align*}
\tau^+(\mathcal{L}u_h^+-\widehat{u}_h)+\tau^-(\mathcal{L}u_h^--\widehat{u}_h)=&-[{\bm q}_h\cdot{\bm n}],\\
(\mathcal{L}u_h^+-\widehat{u}_h)-(\mathcal{L}u_h^--\widehat{u}_h)=&\mathcal{L}[u_h],
\end{align*}
where $v^+$ and $v^-$ denote the restrictions of $v$ on $\partial K^+\cap F$ and $\partial K^-\cap F$ for $v=u_h,\tau$, and $K^+$ and $K^-$ denote the elements sharing the common face $F$. Then, by a simple calculation, we derive
\begin{align*}
(\tau^++\tau^-)(\mathcal{L}u_h^+-\widehat{u}_h)=-[{\bm q}_h\cdot{\bm n}]+\tau^-\mathcal{L}[u_h],\\
(\tau^++\tau^-)(\mathcal{L}u_h^--\widehat{u}_h)=-[{\bm q}_h\cdot{\bm n}]-\tau^+\mathcal{L}[u_h].
\end{align*}
Therefore, for each $K\in\mathcal{T}_h$,
\begin{align*}
\|\tau(\mathcal{L}u_h-\widehat{u}_h)\|_{L^{\infty}(\partial K)}\leq& \|[{\bm q}_h\cdot{\bm n}]\|_{L^{\infty}(\partial K\backslash\mathcal{E}_h^{\partial})}+Ch_K^{-1}\|[u_h]\|_{L^{\infty}(\partial K)}\\
\leq &\|{\bm q}-{\bm q}_h\|_{L^{\infty}(\Omega)}+Ch_K^{-1}\|[u_h]\|_{L^{\infty}(\partial K)},
\end{align*}
which, together with Theorem \ref{mn-jump} and Theorem \ref{mn-g}, leads to the desired result (\ref{mn-HDG:1}) for the HDG-1 and HDG-2 methods. The proof of Corollary \ref{mn-HDG} is complete.

For Corollary \ref{mn-HDG-1}, using (\ref{rG23-L1-HDG-proof:2}) and the inverse estimate, we have for each $K\in\mathcal{T}_h$
\begin{align}
h_K^{-1}\|u_h-\widehat{u}_h\|_{L^{\infty}(\partial K)}\leq& Ch_K^{-\frac{d+1}{2}}\|u_h-\widehat{u}_h\|_{L^2(\partial K)}\leq Ch_K^{-\frac{d}{2}}\Big(
\|\Pi_W{\bm q}-{\bm q}_h\|_{L^2(K)}\nonumber\\
&+\|\sqrt{\tau}(\mathcal{L}u_h-\widehat{u}_h)\|_{L^2(\partial K)}+h_K^{-1}\|u-\Pi_Vu\|_{L^2(K)}+\|\nabla(u-\Pi_Vu)\|_{L^2(K)}\nonumber\\
&+h_K^{-1}\|u-\Pi_ku\|_{L^2(K)}+\|\nabla(u-\Pi_ku)\|_{L^2(K)}\Big)\nonumber\\
\leq&C\Big(\|{\bm q}-\Pi_W{\bm q}\|_{L^{\infty}(K)}+\|{\bm q}-{\bm q}_h\|_{L^{\infty}(K)}+\|\tau(\mathcal{L}u_h-\widehat{u}_h)\|_{L^{\infty}(\partial K)}\nonumber\\
&+\|\nabla(u-\Pi_Vu)\|_{L^{\infty}(K)}+\|\nabla(u-\Pi_ku)\|_{L^{\infty}(K)}\Big),\nonumber
\end{align}
which, together with Theorem \ref{mn-g} and Corollary \ref{mn-HDG}, results in
\begin{align}
h_K^{-1}\|u_h-\widehat{u}_h\|_{L^{\infty}(\partial K)}\leq&C\Big(\|{\bm q}-\Pi_W{\bm q}\|_{L^{\infty}(\Omega)}+
\|\nabla(u-\Pi_Vu)\|_{L_h^{\infty}(\Omega)}\nonumber\\
&+\|\nabla(u-\Pi_ku)\|_{L_h^{\infty}(\Omega)}\Big),\label{mn-HDG-1-proof:1}
\end{align}
For the HDG-2 method, the above estimate (\ref{mn-HDG-1-proof:1}) is the result (\ref{mn-HDG-1:2}) in Corollary \ref{mn-HDG-1}. For the hybrid mixed DG method and the HDG-1 method, the above estimate (\ref{mn-HDG-1-proof:1}) is the result (\ref{mn-HDG-1:1}) in Corollary \ref{mn-HDG-1} since $\Pi_k=\Pi_V$ for these two cases. Therefore, the proof of Corollary \ref{mn-HDG-1} is complete.

\section{Application to the biharmonic problem}\label{sec5}
One attractive feature of the interface Green's function framework is that it is independent of the particular
DG formulation. To demonstrate this generality, we apply it to the \(C^0\) interior penalty method for the
biharmonic equation in convex polygonal domains.

In this section, we assume that $\Omega$ is a two-dimensional bounded and convex
polygonal domain. We consider the following biharmonic equation:
\begin{align}
\Delta^2u=f\quad {\rm in}~\Omega,\quad u=\nabla u\cdot{\bm n}=0\quad {\rm on}~\partial\Omega.\label{model-bi}
\end{align}
Here $u$ is the unknown and $f$ is the source term.
Let $D^s$ denote the general $s$-th order differential operator. The weak formulation of (\ref{model-bi}) then reads as follows: Find
$u\in H_0^2(\Omega)=\{v\in H^2(\Omega): v=\nabla v\cdot{\bm n}=0,~{\rm on}~\partial\Omega\}$ such that
\begin{align}
(D^2u,D^2v)=(f,v),\quad \forall v\in H_0^2(\Omega).
\end{align}
From \cite[Sec. 5.9]{k1967}, we know that for each $f\in H^{-1}(\Omega)$, the above weak formulation has a unique weak solution $u\in H_0^2(\Omega)\cap H^3(\Omega)$ satisfying
\begin{align}
\|u\|_{H^3(\Omega)}\leq C\|f\|_{H^{-1}(\Omega)}.\label{bi-regularity}
\end{align}
In addition, similar to (\ref{model_assumptions}), we assume that there is $s>0$ such that the solution $u$ of the fourth-order problem (\ref{model-bi}) has the following regularity:
\begin{align}
u\in W^{2+s,\infty}(\Omega).\label{model-bi-assumptions}
\end{align}
According to the embedding theorem (cf. \cite[Theorem~$1.4.4.1$]{g1985}), (\ref{model-bi-assumptions}) implies that $u\in\mathcal{C}^{2,\alpha}(\overline{\Omega})$ with $\alpha\in(0,1)$ depending on $s$.

\subsection{$C^0$ interior penalty methods}\label{sec4:1}
Based on simplicial mesh $\mathcal{T}_h$, we define the following discrete space:
\begin{align*}
S_h=\{v_h\in H_0^1(\Omega): v_h|_K\in \mathcal{P}^k(K),~\forall K\in\mathcal{T}_h\},\quad (k\geq 2).
\end{align*}
For each edge $F\in\mathcal{E}_h^o$ shared by elements $K^+$ and $K^-$, we define the jump of the normal derivative and the average of the second normal derivative of $v$ across $F$ as follows:
\begin{align*}
\left[\nabla v\cdot{\bm n}_F\right]=\left[\frac{\partial v}{\partial {\bm n}_F}\right]=\frac{\partial v^+}{\partial {\bm n}_F}-\frac{\partial v^-}{\partial {\bm n}_F}\quad {\rm and}\quad \left\{{\bm n}_F^T(D^2v){\bm n}_F\right\}=\left\{\frac{\partial^2 v}{\partial{\bm n}_F^2}\right\}=\frac{1}{2}\left(
\frac{\partial^2 v^+}{\partial {\bm n}_F^2}+\frac{\partial^2 v^-}{\partial{\bm n}_F^2}\right),
\end{align*}
where ${\bm n}_F$ denotes the unit normal pointing from $K^-$ to $K^+$.
For each edge $F\in\mathcal{E}_h^{\partial}$, we take ${\bm n}_F$ to be the unit normal pointing outside $\Omega$ and define
\begin{align*}
\left[\frac{\partial v}{\partial {\bm n}_F}\right]=-\frac{\partial v}{\partial {\bm n}_F}\quad {\rm and}\quad \left\{\frac{\partial^2 v}{\partial{\bm n}_F^2}\right\}=\frac{\partial^2 v}{\partial{\bm n}_F^2}.
\end{align*}
Then the $C^0$ interior penalty ($C^{0}$-IP) method for the fourth order problem (\ref{model-bi}) is to find $u_h\in S_h$ such that
\begin{align}
A_h(u_h,v_h)=(f,v_h),\quad \forall v_h\in S_h,\label{IPG}
\end{align}
where the bilinear form $A_h$ is defined as
\begin{align*}
A_h(u_h,v_h)= & (D^2u_h,D^2v_h)_{\mathcal{T}_h}
+\sum_{F\in\mathcal{E}_h}\int_F \left( \left\{\frac{\partial^2 u_h}{\partial{\bm n}_F^2}\right\}\left[
\frac{\partial v_h}{\partial {\bm n}_F}\right]+\left\{\frac{\partial^2 v_h}{\partial{\bm n}_F^2}\right\}\left[
\frac{\partial u_h}{\partial {\bm n}_F}\right] \right) ds \\
& \quad + \sum_{F\in \mathcal{E}_{h}}\int_{F} \frac{\sigma}{h_F}
\left[\frac{\partial u_h}{\partial {\bm n}_F}\right]\left[\frac{\partial v_h}{\partial {\bm n}_F}\right] ds,
\end{align*}
and $\sigma>0$ is a (sufficiently large) stabilization parameter independent of the mesh size $h$.
Since the bilinear form $A_h$ is bounded and coercive (cf. \cite{bs2005}), the discrete problem (\ref{IPG})
has a unique solution.

\subsection{Main results}\label{sec4:2}
In \cite{l2021}, Leykekhman has proved the following maximum-norm error estimate for the $C^0$-IP method for the
biharmonic equation on two dimensional convex polygonal domains:
\begin{align}
\|D^2(u-u_h)\|_{L^{\infty}_h(\Omega)}\leq C|\ln h|^{\frac{3}{2}}\left(h^{-1}\|\nabla(u-I_hu)\|_{L^{\infty}(\Omega)}+\|D^2(u-I_hu)\|_{L_h^{\infty}(\Omega)}\right).
\end{align}
For the $C^0$-IP method, $\|D^2(u-u_h)\|_{L^{\infty}_h(\Omega)}$ is also only a seminorm. Therefore, to bridge this gap, using a similar method that has been established in Section \ref{sec3}, we prove the following results. Note that their proofs are provided in Section \ref{sec4:3}.
\begin{theorem}\label{mn-bi-jump}
Let $u$ and $u_h$ be the solutions of problems (\ref{model-bi}) and (\ref{IPG}).

(1) We assume (\ref{model-bi-assumptions}) holds, then
\begin{align}
\max_{F\in\mathcal{E}_h}h^{-1}_F\left\|\left[\frac{\partial(u-u_h)}{\partial {\bm n}_F}\right]\right\|_{L^{\infty}(F)}\leq& C|\ln h|^{\frac{3}{2}}\Big(h^{-1}\|\nabla(u-I_hu)\|_{L^{\infty}(\Omega)}\\
&+\|D^2(u-I_hu)\|_{L_h^{\infty}(\Omega)}\Big).\nonumber
\end{align}

(2) If the largest inner angle of the polygonal domain is less than $126.28^\circ$, we further have
\begin{align}
\max_{F\in\mathcal{E}_h}h^{-1}_F\left\|\left[\frac{\partial(u-u_h)}{\partial {\bm n}_F}\right]\right\|_{L^{\infty}(F)}\leq& C|\ln h|\Big(h^{-1}\|\nabla(u-I_hu)\|_{L^{\infty}(\Omega)}\\
&+\|D^2(u-I_hu)\|_{L_h^{\infty}(\Omega)}\Big).\nonumber
\end{align}
Here the positive constants $C$ are independent of the mesh size $h$ and $I_h$ denotes the standard Lagrange interpolation
operator onto $S_h$.
\end{theorem}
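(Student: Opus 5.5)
We transplant the interface Green's function framework of Section~\ref{sec3:3} to the fourth-order setting. Fix an edge $F$ and a point $x_\ast\in F$. For the normal derivative we use a discrete delta that reproduces first derivatives: a smooth function $\boldsymbol{\delta}^1_{h,K}$ supported in $K$ with $(\boldsymbol{\delta}^1_{h,K},\chi)_K=\partial_{{\bm n}_F}\chi(x_\ast)$ for all $\chi\in\mathcal{P}^k(K)$, and with $|\boldsymbol{\delta}^1_{h,K}|_{W^{l,p}}\le Ch^{-1-l-d(1-1/p)}$ (cf. \cite[Appendix A]{sw1995}).

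For an interior edge shared by $K_\ast^\pm$, we define the interface Green's function $\Gamma_{\rm if}$ by
\[
\Delta^2\Gamma_{\rm if}=\boldsymbol{\delta}^1_{h,K_\ast^+}-\boldsymbol{\delta}^1_{h,K_\ast^-},\qquad \Gamma_{\rm if}=\partial_{\bm n}\Gamma_{\rm if}=0\ \text{on}\ \partial\Omega,
\]
and for a boundary edge we take the one-sided analogue $\Gamma_{\rm bf}$. Let $\Gamma_{{\rm if},h}\in S_h$ be the $C^0$-IP approximations. Galerkin orthogonality and the consistency and symmetry of $A_h$ then give the representation
\[
\Bigl[\tfrac{\partial(I_hu-u_h)}{\partial{\bm n}_F}\Bigr](x_\ast)=A_h(\Gamma_{{\rm if},h}-\Gamma_{\rm if},I_hu-u)+(\boldsymbol{\delta}^1_{h,K_\ast^+}-\boldsymbol{\delta}^1_{h,K_\ast^-},I_hu-u).
\]
By H\"older's inequality this is bounded by
\[
\Bigl(\|D^2(\Gamma_{\rm if}-\Gamma_{{\rm if},h})\|_{L^1_h}+\textstyle\sum_F\bigl(h_F\|\{\partial^2_{{\bm n}}(\cdot)\}\|_{L^1(F)}+\|[\partial_{\bm n}(\cdot)]\|_{L^1(F)}\bigr)+h\Bigr)
\]
times $h^{-1}\|\nabla(u-I_hu)\|_{L^\infty}+\|D^2(u-I_hu)\|_{L^\infty_h}$. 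The $h^{-1}$ enters through the trace inequality applied to $[\partial_{\bm n}(I_hu-u)]$, and the last term is handled with $\|\boldsymbol{\delta}^1_h\|_{L^1}\le Ch^{-1}$. The jump of $u-I_hu$ itself is bounded directly by $\|D^2(u-I_hu)\|_{L^\infty_h}$. The claim therefore reduces to bounding the bracket by $Ch|\ln h|^{3/2}$, or by $Ch|\ln h|$ under the angle condition.

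The key analogue of Lemma~\ref{ee-rG23} comes from writing $\Gamma_{\rm if}(x)=(\boldsymbol{\delta}^1_{h,+}-\boldsymbol{\delta}^1_{h,-},G_2(x,\cdot))$, where $G_2$ is the biharmonic Green's function. The two moments $\partial_{{\bm n}_F}G_2(x,x_\ast)$ cancel, and a Taylor expansion along the curves of Lemma~\ref{curve-assumption} yields $|D^m_x\Gamma_{\rm if}(x)|\le Ch\sup|D^m_xD^2_yG_2|$ on $\Omega_j^3$. For the boundary case, the clamped condition kills $\partial_yG_2$ at $x^\ast$. A cutoff argument as in (\ref{ee-rG23-proof:1}), now at $H^3$ level, then gives $|\Gamma_{\rm if}|_{H^3(\Omega_j^2)}\le Ch\,d_j^{-1-d/2}\cdot(\text{Green's bound factor})$.

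The main obstacle is that on convex polygons the pointwise bounds for $D_x^mD_y^2G_2$ with $m\le 3$ are not available without loss; the available estimates, as used by Leykekhman \cite{l2021}, carry logarithmic factors or limited H\"older exponents depending on the largest angle. We would use exactly those bounds. The dyadic sum of local energy estimates, the duality argument for the $L^2$ pollution term (now with $W\in H^3$ via (\ref{bi-regularity}) and the weak H\"older decay of the biharmonic solution away from the support), and the choice of large $M$ then proceed as in Lemma~\ref{rG23-L1}. The $\sum_j$ over $I\approx|\ln h|$ shells, combined with the $|\ln h|^{1/2}$ loss from the Green's function estimates in general convex polygons, produces $|\ln h|^{3/2}$. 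When the largest angle is below $126.28^\circ$, the sharper Green's function regularity removes the extra $|\ln h|^{1/2}$, leaving $|\ln h|$. Taking the maximum over $x_\ast$ and $F$ completes the argument.
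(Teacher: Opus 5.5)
\textbf{There is a genuine gap: the bracket estimate is asserted, not derived.} Your reduction is the paper's. The representation via Galerkin orthogonality and consistency of $A_h$, and the bound by the $L^1$ bracket times $h^{-1}\|\nabla(u-I_hu)\|_{L^\infty}+\|D^2(u-I_hu)\|_{L^\infty_h}$, both match; your $\boldsymbol{\delta}^1_{h,K}$ is essentially $-\nabla\cdot(\bm n_F\delta_{h,K})$. The first-moment cancellation with a Taylor remainder is exactly how Lemma~\ref{rG-bi-lee} is proved. But the whole content of the theorem is the bound of that bracket by $Ch|\ln h|^{3/2}$, resp.\ $Ch|\ln h|$, and the mechanism you sketch does not produce it.
\begin{itemize}
\item You push the local estimate to the $H^3$ level, which needs control of $D_x^3D_y^2G_2$. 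Near a corner whose angle exceeds $126.28^\circ$, the clamped-plate Green's function is only $H^3$- and not $H^4$-regular. Its third derivatives blow up like a negative power of the distance to the corner, which is not a logarithmic loss, so your ``Green's bound factor'' stays undetermined.
\item Your logarithm count is inconsistent with your own scheme. If $|\Gamma_{\rm if}|_{H^3(\Omega_j^2)}\le Chd_j^{-2}$, even up to a power of $|\ln d_j|$, then $d_j\|D^2(\Gamma_{\rm if}-I_h\Gamma_{\rm if})\|_{L^2(\Omega_j^1)}\lesssim h^2d_j^{-1}$. The shell sum is then geometric, $\le Ch/M$ up to that log power, so the $I\approx|\ln h|$ shells contribute no $|\ln h|$ at all.
\item The pointwise bounds you actually need for $m=1,2$, namely $|D_x^mD_y^2G_2|\le C|x-y|^{-m}$, hold without any loss by (\ref{pe-G-bi}). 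So there is no $|\ln h|^{1/2}$ coming from them.
\item The local duality ``with $W\in H^3$ and weak H\"older decay'' is not carried out, so the pollution term $\sum_j d_j^{-1}\|\Gamma_{\rm if}-\Gamma_{{\rm if},h}\|_{L^2(\Omega_j^1)}$ is left unbounded.
\end{itemize}

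\textbf{How the paper closes it.} It stays at the $D^2$ level and puts the logarithms where they actually arise.
\begin{itemize}
\item From the cancellation and $|D_x^2D_y^2G|\le C|x-y|^{-2}$ it gets $|D^2g_{\rm if}|\le Chd_j^{-2}$ on $\Omega_j^2$.
\item In Lemma~\ref{lee-bi} the interpolation error is bounded by $|D^2g_{\rm if}|_{L^2(\Omega_j^2)}\le Chd_j^{-1}$, with no gain of a power of $h$, precisely because $D^3$ bounds are unavailable. This gives $d_j\mathcal F_{\Omega_j}\le Ch+Cd_j^{-1}\|g_{\rm if}-g_{{\rm if},h}\|_{L^2(\Omega_j^1)}$.
\item The shells therefore contribute one additive $Ch|\ln h|$.
\item The two cases differ only in the pollution sum. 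In general it is bounded by $Ch|\ln h|^{3/2}$ via Leykekhman's duality argument \cite[Subsection 4.2]{l2021}; this is where the exponent $3/2$ comes from, as an additive term rather than a product of a shell log with a Green's-function log.
\item Under the angle condition the dual clamped-plate problem is $H^4$-regular. A global duality then gives $(f,g_{\rm if}-g_{{\rm if},h})=A_h(W-I_hW,g_{\rm if}-g_{{\rm if},h})\le Ch^4\|W\|_{H^4}\|g_{\rm if}\|_{H^4}\le Ch^2$, using $\|g_{\rm if}\|_{H^4}\le Ch^{-2}$.
\item Hence the pollution sum is $\le Ch^2\sum_j d_j^{-1}\le Ch$, and only the shell logarithm survives.
\end{itemize}
So the improvement in part (2) comes from $H^4$ regularity of the dual problem, not from sharper pointwise Green's function bounds removing a $|\ln h|^{1/2}$.
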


In \cite[Theorem 4.2]{hltwz2026}, if the largest inner angle of the polygonal domain is less than $126.28^\circ$, the authors established the following local error estimate: For any $z\in\Omega$,
\begin{align}
|\nabla(u-u_h)(z)|\leq& C\left(\|\nabla(u-I_hu)\|_{L^{\infty}(B_{5h})}+h^{-1}\|u-I_hu\|_{L^{\infty}(B_{5h})}\right)\nonumber\\
&+C|\ln h|^{\frac{1}{2}}|||(u-I_hu)|||_{2,B_{\theta},z}+Ch^2\theta^{-2}|\ln h|^{\frac{1}{2}}|||u-I_hu|||_{2,\Omega},\label{pe-bi:2}
\end{align}
for $k\geq 3$,
where $B_s=B(z,s)\cap\Omega$, $B(z,s)$ is a ball of radius $s$ centered at $z$, and
\begin{align*}
|||v|||_{2,\mathcal{D}}^2=&\sum_{K\in\mathcal{T}_h(\mathcal{D})}\|D^2v\|^2_{L^2(K)}+\sum_{F\in\mathcal{E}_h\cap\mathcal{T}_h(\mathcal{D})}h_F
\left\|\left\{\frac{\partial^2v}{\partial{\bm n}_F^2}\right\}\right\|^2_{L^2(F)}+h_F^{-1}\left\|\left[\frac{\partial v}{\partial{\bm n}_F}\right]\right\|^2_{L^2(F)},\\
|||v|||_{2,\mathcal{D},z}^2=&\sum_{K\in\mathcal{T}_h(\mathcal{D})}\left(\frac{h^\frac{3}{2}}{{\rm dist}(z,K)^{\frac{3}{2}}+h^{\frac{3}{2}}}|||v|||_{2,K}\right)^2,
\end{align*}
for $\mathcal{D}\subset\Omega$ and $\mathcal{T}_h(\mathcal{D})=\{K\in\mathcal{T}_h: K\cap\mathcal{D}\neq\emptyset\}$.
According to the proof of Theorem 4.1 and Theorem 4.2 in \cite{hltwz2026}, we know that $\theta$ is a positive constant satisfying $\theta\geq h$. Therefore,
using the $L^{\infty}$ norm to estimate the last term in (\ref{pe-bi:2}), we obtain
\begin{align*}
h^2\theta^{-2}|\ln h|^{\frac{1}{2}}|||u-I_hu|||_{2,\Omega}\leq C|\ln h|^{\frac{1}{2}}\left(h^{-1}\|\nabla(u-I_hu)\|_{L^{\infty}(\Omega)}+
\|D^2(u-I_hu)\|_{L_h^{\infty}(\Omega)}\right).
\end{align*}
Then, for each $F\in\mathcal{E}_h$, we only can derive from (\ref{pe-bi:2}) and the above estimate the following suboptimal result:
\begin{align*}
h_F^{-1}\left\|\left[\frac{\partial(u-u_h)}{\partial {\bm n}_F}\right]\right\|_{L^{\infty}(F)}\leq& Ch^{-1}|\ln h|^{\frac{1}{2}}\left(h^{-1}\|\nabla(u-I_hu)\|_{L^{\infty}(\Omega)}+\|D^2(u-I_hu)\|_{L_h^{\infty}(\Omega)}\right).
\end{align*}
Compared with the above result, our results presented in Theorem \ref{mn-bi-jump} are much better.

\section{Conclusion}\label{sec6}
In this paper, we have developed an interface Green's function framework to establish a complete discrete $W^{1,\infty}$ analysis for discontinuous and nonconforming finite element methods. Historically, obtaining optimal, logarithm-free maximum-norm estimates for interface jumps for polynomial degrees strictly greater than one on convex polyhedral domains has been obstructed by the lack of weak discrete maximum principles for DG methods in convex polyhedral domains. To overcome this fundamental barrier, we introduced a purely methodological innovation: rather than relying on standard Green's functions driven by a single discrete delta function, we utilized Green's functions generated by the difference between adjacent discrete delta functions.

By exploiting integral property of the discrete delta function, we explicitly captured the cancellation mechanism between these neighboring singular sources. This structural cancellation elegantly neutralizes the dominant singular behavior, yielding a novel, sharper local energy estimate. Consequently, our framework completely circumvents the need for a weak discrete maximum principle and effectively removes the suboptimal logarithmic factors inherited from classical trace-inequality-based approaches.

The power and versatility of the proposed framework have been extensively demonstrated. We successfully established optimal, complete discrete $W^{1,\infty}$ error estimates simultaneously bounding broken gradients and interface jumps for the symmetric IPDG, the hybrid mixed DG, and two representative HDG methods applied to the Poisson equation in convex polyhedral domains. Furthermore, our successful extension of this framework to the $C^0$ interior penalty approximation of the biharmonic equation in convex polygonal domains, which yielded the first maximum-norm bounds for the jumps of the normal derivative, confirms that our analytical techniques are robust and extend naturally beyond DG methods for the Poisson problem.

\smallskip
\smallskip

\appendix

\section{}

\subsection{Proof of Theorem \ref{mn-g}}\label{sec3:4}

In the following, we provide the complete proof of Theorem \ref{mn-g}.
For each $x_0\in K_{x_0}$ with $K_{x_0}\in\mathcal{T}_h$, let the regularized Green's function $\Gamma(x_0,x)$
be the solution of
\begin{align}
-\Delta\Gamma(x_0,x)=\nabla\cdot({\bm e}_i\delta_{h,x_0})\quad {\rm in}~\Omega,\quad \Gamma(x_0,x)=0\quad {\rm on}~\partial\Omega,\label{rG1}
\end{align}
where ${\bm e}_i$ is the $i$-th standard basis vector in $\mathbb{R}^d$. By setting $\boldsymbol{\mathcal{R}}_1(x_0,x)=-\nabla\Gamma(x_0,x)$ and $\boldsymbol{\mathcal{ R}}_2
(x_0,x)=-\nabla\Gamma(x_0,x)-{\bm e}_i\delta_{h,x_0}$, we arrive at two mixed formulations of the above problem
\begin{align}
\boldsymbol{\mathcal{R}}_1+\nabla\Gamma=0\quad {\rm in}~\Omega,\quad
\nabla\cdot\boldsymbol{\mathcal{R}}_1=\nabla\cdot({\bm e}_i\delta_{h,x_0})\quad {\rm in}~\Omega,\quad
\Gamma=0\quad {\rm on}~\partial\Omega,\label{rG1-1}
\end{align}
and
\begin{align}
\boldsymbol{\mathcal{R}}_2+\nabla\Gamma=-{\bm e}_i\delta_{h,x_0}\quad {\rm in}~\Omega,\quad
\nabla\cdot\boldsymbol{\mathcal{R}}_2=0\quad {\rm in}~\Omega,\quad
\Gamma=0\quad {\rm on}~\partial\Omega.\label{rG1-2}
\end{align}

\begin{lemma}\label{rG1-lee}
Let $\Gamma(x_0,x)$ be the regularized Green's function defined in (\ref{rG1}), then we have
\begin{align}
|\Gamma|_{H^2(\Omega_j^2(x_0))}\leq C d_j^{-1-\frac{d}{2}}.
\end{align}
\end{lemma}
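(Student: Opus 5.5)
We follow the template of the proof of Lemma \ref{ee-rG23}, with the cancellation step replaced by one integration by parts against the divergence-form source. First, take a cutoff $\varphi\in\mathcal{C}_0^{\infty}(\Omega_j^3(x_0))$ with $\varphi\equiv1$ on $\Omega_j^2(x_0)$ and $|\varphi|_{W^{l,\infty}(\Omega)}\le Cd_j^{-l}$. Since $\delta_{h,x_0}$ is supported in $K_{x_0}\subset\Omega_*(x_0)$ and $M\ge3$, the source $\nabla\cdot({\bm e}_i\delta_{h,x_0})$ vanishes on $\Omega_j^3(x_0)$, so $\Gamma$ is harmonic there and
\[
-\Delta(\varphi\Gamma)=-\Gamma\Delta\varphi-2\nabla\varphi\cdot\nabla\Gamma .
\]
The $H^2$ regularity of the Dirichlet Laplacian on the convex polyhedron then gives, exactly as in (\ref{ee-rG23-proof:1}),
\[
|\Gamma|_{H^2(\Omega_j^2(x_0))}\le C\bigl(d_j^{-2}\|\Gamma\|_{L^2(\Omega_j^3(x_0))}+d_j^{-1}\|\nabla\Gamma\|_{L^2(\Omega_j^3(x_0))}\bigr).
\]
If $\Omega_j^3(x_0)$ reaches into $\Omega_*(x_0)$ for the few smallest $j$ near $I$, enlarge $M$ or treat those finitely many indices with the global bound below.

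Second, represent $\Gamma$ pointwise. For $x\in\Omega_j^3(x_0)$, testing (\ref{rG1}) against $G(x,\cdot)$ and integrating by parts (no boundary term, since $\delta_{h,x_0}$ has compact support in $K_{x_0}$) gives
\[
\Gamma(x_0,x)=-\int_{K_{x_0}}\delta_{h,x_0}(y)\,\partial_{y_i}G(x,y)\,dy,\qquad
\nabla_x\Gamma(x_0,x)=-\int_{K_{x_0}}\delta_{h,x_0}(y)\,\nabla_x\partial_{y_i}G(x,y)\,dy .
\]
For $y\in K_{x_0}$ and $x\in\Omega_j^3(x_0)$ we have $|x-y|\ge Cd_j$. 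Combining (\ref{pe-G}) with $\|\delta_{h,x_0}\|_{L^1}\le C$ from (\ref{ddd:2}) yields $|\Gamma(x_0,x)|\le Cd_j^{1-d}$ and $|\nabla_x\Gamma(x_0,x)|\le Cd_j^{-d}$. Unlike Lemma \ref{ee-rG23}, no Taylor expansion or cancellation is needed: one derivative already sits on the source, and this costs exactly one power of $d_j^{-1}$ relative to the interface case. In exchange there is no extra factor $h$, consistent with the statement.

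Third, integrate these bounds over $\Omega_j^3(x_0)$, whose measure is at most $Cd_j^{d}$:
\[
\|\Gamma\|_{L^2(\Omega_j^3(x_0))}\le Cd_j^{1-d/2},\qquad \|\nabla\Gamma\|_{L^2(\Omega_j^3(x_0))}\le Cd_j^{-d/2}.
\]
Inserting these into the first step gives $|\Gamma|_{H^2(\Omega_j^2(x_0))}\le Cd_j^{-1-d/2}$.

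The main obstacle is justifying the representation formula together with the mixed bound $|\nabla_x\nabla_yG|\le C|x-y|^{-d}$ uniformly up to $\partial\Omega$, including edges and vertices. This bound is exactly (\ref{pe-G}) with $|\alpha|=|\beta|=1$, which the paper takes from \cite{glrs2009,drs2024}, so it can be assumed. What remains is to check that differentiating under the integral is legitimate. It is, because $x$ stays a distance of order $d_j\ge Mh/2$ from $\operatorname{supp}\delta_{h,x_0}$, so $G(x,\cdot)$ is smooth there.
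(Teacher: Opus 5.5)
Your proof is correct and takes essentially the route behind the paper's citation of \cite[Lemma 3.7]{lc2026} and \cite[(4.10)]{dlsw2012}: a cutoff with convex-domain $H^2$ regularity, then pointwise bounds from $\Gamma(x_0,x)=-\int_{K_{x_0}}\delta_{h,x_0}(y)\,\partial_{y_i}G(x,y)\,dy$ and (\ref{pe-G}), i.e.\ the proof of Lemma \ref{ee-rG23} with the cancellation step dropped. Your hedge about $\Omega_j^3(x_0)$ meeting $\Omega_{\ast}(x_0)$ is unnecessary, and the ``global bound below'' it refers to is never given. Every $\Omega_k(x_0)$ with $k\le I$ lies outside the ball of radius $d_{I+1}>Mh/2\ge 3h/2$ about $x_0$, so $K_{x_0}\subset\overline{B(x_0,h)}$ stays at distance at least $d_j/48$ from $\Omega_j^3(x_0)$ for every $j\le I$ once $M\ge 3$.
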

\begin{proof}
The result presented in this lemma can be found in \cite[Lemma 3.7]{lc2026} and \cite[(4.10)]{dlsw2012}.
\end{proof}

Let $\Gamma_h^{{\rm DG}}\in V_h^{{\rm DG}}$ be the IPDG approximation of problem (\ref{rG1}) such that
\begin{align}
a_h(\Gamma_h^{{\rm DG}},v_h)=(v_h,\nabla\cdot({\bm e}_i\delta_{h,x_0})),\quad \forall v_h\in V_h^{{\rm DG}},\label{rG1-DG}
\end{align}
and let $(\boldsymbol{\mathcal{R}}_{1,h},\Gamma_{1,h},\widehat{\Gamma}_{1,h})\in {\bm W}_h\times V_h\times \widehat{V}_h$ and $(\boldsymbol{\mathcal{R}}_{2,h},\Gamma_{2,h},\widehat{\Gamma}_{2,h})\in {\bm W}_h\times V_h\times \widehat{V}_h$ be the HDG approximations of problems
(\ref{rG1-1}) and (\ref{rG1-2}) satisfying
\begin{align}
\mathcal{B}(\boldsymbol{\mathcal{R}}_{1,h},\Gamma_{1,h},\widehat{\Gamma}_{1,h};{\bm w}_h,v_h,\widehat{v}_h)=-(\nabla\cdot({\bm e}_i\delta_{h,x_0}),v_h),\quad \forall ({\bm w}_h,v_h,\widehat{v}_h)\in {\bm W}_h\times V_h\times \widehat{V}_h,\label{rG1-1-HDG}
\end{align}
and
\begin{align}
\mathcal{B}(\boldsymbol{\mathcal{R}}_{2,h},\Gamma_{2,h},\widehat{\Gamma}_{2,h};{\bm w}_h,v_h,\widehat{v}_h)=-({\bm e}_i\delta_{h,x_0},{\bm w}_h),\quad \forall ({\bm w}_h,v_h,\widehat{v}_h)\in{\bm W}_h\times V_h\times \widehat{V}_h.\label{rG1-2-HDG}
\end{align}
Then, similar to the derivation of (\ref{rG2:1}), we have by using Lemma \ref{dfp}, (\ref{ddd:2}), (\ref{rG1}), and (\ref{rG1-DG})
\begin{align}
&-({\bm e}_i\cdot\nabla(\Pi_ku-u_h^{{\rm DG}}))(x_0)=(\Pi_ku-u_h^{{\rm DG}}, \nabla\cdot({\bm e}_i\delta_{h,x_0}))\nonumber\\
=&a_h(\Gamma_h^{{\rm DG}}-\Gamma,\Pi_ku-u)+(\nabla\cdot({\bm e}_i\delta_{h,x_0}),\Pi_ku-u)_{\mathcal{T}_h}\nonumber\\
\leq&C\Big(\sum_{F\in\mathcal{E}_h}\left(h_F\|\{\nabla(\Gamma-\Gamma_h^{{\rm DG}})\cdot{\bm n}\}\|_{L^1(F)}+\|[\Gamma-\Gamma_h^{{\rm DG}}]\|_{L^1(F)}\right)\nonumber\\
&+\|\nabla(\Gamma-\Gamma_h^{{\rm DG}})\|_{L_h^1(\Omega)}\Big)\Big(\|\nabla(u-\Pi_ku)\|_{L_h^{\infty}(\Omega)}+h^{-1}\|u-\Pi_ku\|_{L^{\infty}(\Omega)}\Big)\label{rG1:1}\\
&+\|\nabla\cdot({\bm e}_i\delta_{h,x_0})\|_{L^1(\Omega)}\|u-\Pi_ku\|_{L^{\infty}(\Omega)}\nonumber\\
\leq&C\Big(1+\sum_{F\in\mathcal{E}_h}\left(h_F\|\{\nabla(\Gamma-\Gamma_h^{{\rm DG}})\cdot{\bm n}\}\|_{L^1(F)}+\|[\Gamma-\Gamma_h^{{\rm DG}}]\|_{L^1(F)}\right)\nonumber\\
&+\|\nabla(\Gamma-\Gamma_h^{{\rm DG}})\|_{L_h^1(\Omega)}\Big)\|\nabla(u-\Pi_ku)\|_{L_h^{\infty}(\Omega)}.\nonumber
\end{align}
Similar to the derivation of (\ref{rG2:2}), we obtain from Lemma \ref{dfp}, (\ref{ddd:2}), (\ref{rG1-1}), (\ref{rG1-2}), (\ref{rG1-1-HDG}) and (\ref{rG1-2-HDG}) that
\begin{align}
&\left({\bm e}_i\cdot\nabla(\Pi_Vu-u_h)\right)(x_0)=-(\Pi_Vu-u_h,\nabla\cdot({\bm e}_i\delta_{h,x_0}))_{K_{x_0}}\nonumber\\
=&\mathcal{B}(\boldsymbol{\mathcal{R}}_{1,h}-\boldsymbol{\mathcal{R}}_1,\Gamma_{1,h}-\Gamma,\widehat{\Gamma}_{1,h}-\Gamma;\Pi_W{\bm q}-{\bm q},\Pi_Vu-u,\Pi_{\widehat{V}}u-u)\nonumber\\
&+\mathcal{B}(\boldsymbol{\mathcal{R}}_1,\Gamma,\Gamma;\Pi_W{\bm q}-{\bm q},\Pi_Vu-u,\Pi_{\widehat{V}}u-u)\nonumber\\
=&(\boldsymbol{\mathcal{R}}_{1,h}-\boldsymbol{\mathcal{R}}_1,\Pi_W{\bm q}-{\bm q})+(\nabla(\Gamma_{1,h}-\Gamma),\Pi_W{\bm q}-{\bm q})_{\mathcal{T}_h}\nonumber\\
&-\langle\Gamma_{1,h}-\widehat{\Gamma}_{1,h},(\Pi_W{\bm q}-{\bm q})\cdot{\bm n}\rangle_{\partial\mathcal{T}_h}+
(\boldsymbol{\mathcal{R}}_{1,h}-\boldsymbol{\mathcal{R}}_1,\nabla(\Pi_Vu-u))_{\mathcal{T}_h}\label{rG1-1:1}\\
&-\langle(\boldsymbol{\mathcal{R}}_{1,h}-\boldsymbol{\mathcal{R}}_1)\cdot{\bm n},\Pi_Vu-u\rangle_{\partial\mathcal{T}_h}-(\nabla\cdot({\bm e}_i\delta_{h,x_0}),
\Pi_Vu-u)_{\mathcal{T}_h}\nonumber\\
&-\langle\tau(\mathcal{L}\Gamma_{1,h}-\widehat{\Gamma}_{1,h}),\mathcal{L}(\Pi_Vu-u)\rangle_{\partial\mathcal{T}_h}\nonumber\\
\leq&C\Big(1+\|\boldsymbol{\mathcal{R}}_{1,h}-\boldsymbol{\mathcal{R}}_1\|_{L^1(\Omega)}+\|\nabla(\Gamma_{1,h}-\Gamma)\|_{L_h^1(\Omega)}
+h\|\nabla(\boldsymbol{\mathcal{R}}_{1,h}-\boldsymbol{\mathcal{R}}_1)\|_{L_h^1(\Omega)}\nonumber\\
&+\sum_{K\in\mathcal{T}_h}\|\Gamma_{1,h}-\widehat{\Gamma}_{1,h}\|_{L^1(\partial K)}\Big)\Big(\|{\bm q}-\Pi_W{\bm q}\|_{L^{\infty}(\Omega)}+
\|\nabla(u-\Pi_Vu)\|_{L^{\infty}_h(\Omega)}\Big),\nonumber
\end{align}
and
\begin{align}
&-((\Pi_W{\bm q}-{\bm q}_h)\cdot{\bm e}_i)(x_0)=-(\Pi_W{\bm q}-{\bm q}_h,{\bm e}_i\delta_{h,x_0})\nonumber\\
=&\mathcal{B}(\boldsymbol{\mathcal{R}}_{2,h}-\boldsymbol{\mathcal{R}}_2,\Gamma_{2,h}-\Gamma,\widehat{\Gamma}_{2,h}-\Gamma;\Pi_W{\bm q}-{\bm q},\Pi_Vu-u,\Pi_{\widehat{V}}u-u)\nonumber\\
&+\mathcal{B}(\boldsymbol{\mathcal{R}}_2,\Gamma,\Gamma;\Pi_W{\bm q}-{\bm q},\Pi_Vu-u,\Pi_{\widehat{V}}u-u)\nonumber\\
\leq& C\Big(1+\|\boldsymbol{\mathcal{R}}_{2,h}-\boldsymbol{\mathcal{R}}_2\|_{L^1(\Omega)}+\|\nabla(\Gamma_{2,h}-\Gamma)\|_{L_h^1(\Omega)}
+h\|\nabla(\boldsymbol{\mathcal{R}}_{2,h}-\boldsymbol{\mathcal{R}}_2)\|_{L_h^1(\Omega)}\label{rG1-2:1}\\
&+\sum_{K\in\mathcal{T}_h}\|\Gamma_{2,h}-\widehat{\Gamma}_{2,h}\|_{L^1(\partial K)}\Big)\Big(\|{\bm q}-\Pi_W{\bm q}\|_{L^{\infty}(\Omega)}+
\|\nabla(u-\Pi_Vu)\|_{L^{\infty}_h(\Omega)}\Big).\nonumber
\end{align}

Next, we prove Lemma~\ref{rG1-L1} below, which, together with (\ref{rG1:1}), (\ref{rG1-1:1}),  (\ref{rG1-2:1}) and the fact that $(\Pi_{k}u-u,1)_{K}=(\Pi_{V}u-u,1)_{K}=0$ for any $K \in \mathcal{T}_{h}$, leads
to the desired results presented in Theorem~\ref{mn-g}.
\begin{lemma}\label{rG1-L1}
Let $(\boldsymbol{\mathcal{R}}_1,\Gamma)$ and $(\boldsymbol{\mathcal{R}}_2,\Gamma)$ be the solutions of problems (\ref{rG1-1}) and (\ref{rG1-2}). Let
$\Gamma_h^{{\rm DG}}$, $(\boldsymbol{\mathcal{R}}_{1,h},\Gamma_{1,h},\widehat{\Gamma}_{1,h})$ and $(\boldsymbol{\mathcal{R}}_{2,h},\Gamma_{2,h},
\widehat{\Gamma}_{2,h})$ be their IPDG and HDG approximations defined in (\ref{rG1-DG}), (\ref{rG1-1-HDG}) and (\ref{rG1-2-HDG}) respectively. Then, we have
\begin{align}
\sum_{F\in\mathcal{E}_h}\left(h_F\|\{\nabla(\Gamma-\Gamma_h^{{\rm DG}})\cdot{\bm n}\}\|_{L^1(F)}+\|[\Gamma-\Gamma_h^{{\rm DG}}]\|_{L^1(F)}\right)
+\|\nabla(\Gamma-\Gamma_h^{{\rm DG}})\|_{L_h^1(\Omega)}\leq C,
\end{align}
and
\begin{align}
&\sum_{K\in\mathcal{T}_h}\|\Gamma_{{\rm s},h}-\widehat{\Gamma}_{{\rm s},h}\|_{L^1(\partial K)}+\|\boldsymbol{\mathcal{R}}_{{\rm s},h}-\boldsymbol{\mathcal{R}}_{\rm s}\|_{L^1(\Omega)}\nonumber\\
&+\|\nabla(\Gamma_{{\rm s},h}-\Gamma)\|_{L_h^1(\Omega)}+h\|\nabla(\boldsymbol{\mathcal{R}}_{{\rm s},h}-\boldsymbol{\mathcal{R}}_{\rm s})\|_{L_h^1(\Omega)}
\leq C,
\end{align}
for ${\rm s\in\{1,2\}}$.
\end{lemma}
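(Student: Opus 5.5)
The plan is to repeat, step by step, the argument used for Lemma \ref{rG23-L1}. The only change is that the improved bound of Lemma \ref{ee-rG23} is replaced by the classical bound of Lemma \ref{rG1-lee}. Consequently each dyadic contribution is of size $h d_j^{-1}$ rather than $h^2 d_j^{-1}$, and this still sums to a constant. The right-hand side is now $\nabla\cdot({\bm e}_i\delta_{h,x_0})$. By (\ref{ddd:2}), its $L^2$ norm is $Ch^{-1-d/2}$, so the global energy estimate gives $|\Gamma|_{H^2(\Omega)}\le Ch^{-1-d/2}$.

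For the IPDG part, I split $\Omega=\Omega_\ast(x_0)\cup\bigcup_{j}\Omega_j(x_0)$ and use Cauchy--Schwarz on each piece, as in (\ref{rG23-L1-DG-proof:1}). This bounds the $L^1$-type quantity by
\begin{align*}
Ch^{d/2}\mathcal{K}_{\Omega_\ast(x_0)}+C\sum_{j}d_j^{d/2}\mathcal{K}_{\Omega_j(x_0)},
\end{align*}
where $\mathcal{K}$ is built with $\Gamma-\Gamma_h^{\rm DG}$. On $\Omega_\ast(x_0)$ I apply Lemma \ref{leee}(1) with approximation estimates: $h^{d/2}\cdot h|\Gamma|_{H^2(\Omega)}\le C$. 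On $\Omega_j(x_0)$, Lemma \ref{lee}(1) with $\Omega_0=\Omega_j$ and $\Omega_1=\Omega_j^1$ (distance $\sim d_j\ge Mh$) and Lemma \ref{rG1-lee} give
\begin{align*}
d_j^{d/2}\mathcal{K}_{\Omega_j(x_0)}\le Chd_j^{-1}+Cd_j^{d/2-1}\|\Gamma-\Gamma_h^{\rm DG}\|_{L^2(\Omega_j^1(x_0))}.
\end{align*}
The sum $\sum_j hd_j^{-1}$ is geometric and dominated by $j=I$, so it is at most $C/M\le C$.

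The main step is the local $L^2$ duality bound, which I carry out exactly as in the proof of Lemma \ref{rG23-L1}. I take $f\in\mathcal{C}_0^\infty(\Omega_j^1(x_0))$ with unit norm and its solution $W$, and write
\begin{align*}
(f,\Gamma-\Gamma_h^{\rm DG})=a_h(W-\Pi_kW,\Gamma-\Gamma_h^{\rm DG}).
\end{align*}
I split this into the part on $\Omega_j^3(x_0)$, bounded by $Ch\,\mathcal{K}_{\Omega_j^3(x_0)}$, and the part away from it. The far part is bounded by $Ch^\gamma d_j^{1-\gamma-d/2}$ times the global $L^1$ quantity, using \cite[Lemma 3.8]{lc2026} and \cite[Lemma 4.1]{glrs2009}. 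Multiplying by $d_j^{d/2-1}$ and summing over $j$ gives two contributions: $\sum_j (hd_j^{-1})d_j^{d/2}\mathcal{K}_{\Omega_j}$, and $\sum_j (h/d_j)^\gamma$ times the global $L^1$ quantity. Both carry small factors, at most $CM^{-1}$ and $CM^{-\gamma}$. Choosing $M$ large, I absorb them into the left-hand side, together with the neighbouring $\mathcal{K}_{\Omega_j}$ terms that are also absorbed in (\ref{rG23-L1-DG-proof:2}). The leftover on $\Omega_\ast$ is bounded as above, which yields the first estimate.

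For the HDG estimates with ${\rm s}=1,2$, I follow the second half of the proof of Lemma \ref{rG23-L1}. First, the analogue of (\ref{rG23-L1-HDG-proof:2}) bounds $h^{-1/2}\|\Gamma_{{\rm s},h}-\widehat\Gamma_{{\rm s},h}\|_{L^2(\partial K)}$ by local flux errors, stabilization terms and projection errors. Next come the dyadic decomposition with $\mathcal{J}$, Lemma \ref{lee}(2), and Lemma \ref{rG1-lee}, followed by the same duality argument with $(\phi,\boldsymbol{\mathcal{A}})$. For ${\rm s}=2$ there is an extra issue, which I expect to be the main technical point. Since $\boldsymbol{\mathcal{R}}_2=\boldsymbol{\mathcal{R}}_1-{\bm e}_i\delta_{h,x_0}$ contains the $O(h^{-d})$ source, the projection errors of $\boldsymbol{\mathcal{R}}_2$ near $x_0$ must be checked separately. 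Because $\delta_{h,x_0}$ is supported in $K_{x_0}\subset\Omega_\ast(x_0)$, the $\Omega_j$ terms are unchanged. On $\Omega_\ast$, (\ref{ddd:2}) gives
\begin{align*}
h^{d/2}\left(\|\delta_{h,x_0}-\Pi_W\delta_{h,x_0}\|_{L^2}+h\|\nabla\delta_{h,x_0}\|_{L^2}\right)\le C,
\end{align*}
so the global energy estimate of Lemma \ref{leee}(2) still yields an $O(1)$ bound there. With $M$ large enough for absorption, this completes the proof.
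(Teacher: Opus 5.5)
Your proposal is correct and follows the paper's route: the paper's proof just says the argument of Lemma \ref{rG23-L1} goes through unchanged with Lemma \ref{ee-rG23} replaced by Lemma \ref{rG1-lee}. You carry this out explicitly, with the dyadic sums becoming $\sum_j h d_j^{-1}\le CM^{-1}$ and the near-field term $h^{1+d/2}|\Gamma|_{H^2(\Omega)}\le C$ in place of $Ch$. Your separate check for ${\rm s}=2$ is a detail the paper leaves implicit but consistent with it: the source $-{\bm e}_i\delta_{h,x_0}$ in $\boldsymbol{\mathcal{R}}_2$ only affects $\Omega_\ast(x_0)$, and there (\ref{ddd:2}) keeps the contribution $O(1)$.
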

\begin{proof}
Since the proof of this lemma is identical to that of Lemma \ref{rG23-L1}, except that Lemma \ref{ee-rG23} is replaced by Lemma \ref{rG1-lee}, we omit the details.
\end{proof}

\subsection{Auxiliary results of $C^{0}$-IP method for the biharmonic equation}

Next, we introduce some well known results associated to (\ref{IPG}).
\begin{lemma}\label{dfp-bi}
Let $u$ and $u_h$ be the solutions of problems (\ref{model-bi}) and (\ref{IPG}), then we have
\begin{align}
A_h(u,v_h)=(f,v_h),\quad \forall v_h\in S_h,
\end{align}
and
\begin{align}
&\|D^2(u-u_h)\|_{L_h^2(\Omega)}+\left(\sum_{F\in\mathcal{E}_h}h_F\left\|\left\{\frac{\partial^2(u-u_h)}{\partial{\bm n}_F^2}\right\}\right\|^2_{L^2(F)}
+h_F^{-1}\left\|\left[\frac{\partial(u-u_h)}{\partial{\bm n}_F}\right]\right\|^2_{L^2(F)}\right)^{1/2}\\
\leq&\|D^2(u-I_hu)\|_{L_h^2(\Omega)}+\left(\sum_{F\in\mathcal{E}_h}h_F\left\|\left\{\frac{\partial^2(u-I_hu)}{\partial{\bm n}_F^2}\right\}\right\|^2_{L^2(F)}
+h_F^{-1}\left\|\left[\frac{\partial(u-I_hu)}{\partial{\bm n}_F}\right]\right\|^2_{L^2(F)}\right)^{1/2},\nonumber
\end{align}
where $I_h$ denotes the standard Lagrange interpolation operator onto $S_h$.
\end{lemma}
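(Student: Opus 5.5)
The statement to prove is Lemma \ref{dfp-bi}: Galerkin orthogonality (consistency) of the $C^0$-IP method, and a quasi-optimal energy estimate in the mesh-dependent norm $|||\cdot|||_{2,\Omega}$.

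\emph{Consistency.} Take $v_h\in S_h$ and integrate $(D^2u,D^2v_h)_K$ by parts twice on each $K$. This uses $u\in H^3(\Omega)$ from (\ref{bi-regularity}), together with the fact that $\Delta^2u=f$ holds in the distributional sense. Summing over $K\in\mathcal T_h$ gives
\[
(D^2u,D^2v_h)_{\mathcal T_h}=(f,v_h)-\sum_{F\in\mathcal E_h}\int_F \frac{\partial^2 u}{\partial{\bm n}_F^2}\left[\frac{\partial v_h}{\partial{\bm n}_F}\right]ds .
\]
The remaining face contributions vanish for two reasons. First, $v_h$ is continuous and vanishes on $\partial\Omega$, so its tangential derivatives are single-valued and vanish on boundary edges. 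Second, the third derivatives of $u$ have no jumps across interior edges.

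To arrange signs, split $D^2u\,{\bm n}$ into its normal part $\partial_{{\bm n}_F}^2u$ and its tangential part, and use the sign convention for $[\cdot]$ on boundary edges. Since $u\in H^3$, we have $\{\partial^2_{{\bm n}_F}u\}=\partial^2_{{\bm n}_F}u$. Since $u\in H_0^2(\Omega)$, we also have $[\partial u/\partial{\bm n}_F]=0$ on every edge, so the symmetrizing and penalty terms in $A_h(u,v_h)$ drop out. This gives $A_h(u,v_h)=(f,v_h)$ for all $v_h\in S_h$.

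\emph{Energy estimate.} Write $u-u_h=(u-I_hu)+(I_hu-u_h)$ and set $e_h=I_hu-u_h\in S_h$. Coercivity of $A_h$ on $S_h$ in $|||\cdot|||_{2,\Omega}$ holds for $\sigma$ large, by the discrete trace and inverse inequalities (cf. \cite{bs2005}). Combined with Galerkin orthogonality $A_h(u-u_h,v_h)=0$, it yields
\[
|||e_h|||_{2,\Omega}^2\le C A_h(e_h,e_h)=C A_h(I_hu-u,e_h).
\]
I will then show that $A_h$ is bounded on $(H^3(\Omega)+S_h)\times S_h$ in the norm $|||\cdot|||_{2,\Omega}$. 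That norm already contains $h_F\|\{\partial^2_{{\bm n}_F}\cdot\}\|_{L^2(F)}^2$, which handles the term $\{\partial^2(u-I_hu)/\partial{\bm n}_F^2\}[\partial e_h/\partial{\bm n}_F]$. The term $\{\partial^2 e_h/\partial{\bm n}_F^2\}[\partial(u-I_hu)/\partial{\bm n}_F]$ is controlled by an inverse trace estimate applied to $e_h$. Boundedness gives $|||e_h|||_{2,\Omega}\le C|||u-I_hu|||_{2,\Omega}$, and the triangle inequality then finishes the proof.

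\emph{Main obstacle.} The lemma is stated with constant $1$ rather than $C$ on the right-hand side. With an unspecified coercivity constant, only a constant $C$ is honestly attainable. If the constant $1$ is really intended, I would try a more careful energy argument in which the symmetric consistency terms are absorbed using $\sigma$ large. I expect, however, that the intended statement carries a generic $C$, and I would prove it in that form.
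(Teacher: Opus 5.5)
Your argument is correct and is the standard one behind the paper's proof, which simply cites \cite[Lemma 5 and Lemma 8]{bs2005}: consistency by elementwise integration by parts, then coercivity on $S_h$, Galerkin orthogonality and boundedness of $A_h$ on $(H^3(\Omega)+S_h)\times S_h$. This gives the estimate with a generic constant $C$, which is what is intended: the missing $C$ in the statement is an omission, since Brenner--Sung's bound carries one and Lemma \ref{rG-bi-L1} uses the lemma only up to constants, so you need not pursue a constant-$1$ version. One small repair in the consistency step: for $u\in H^3(\Omega)$, with $f$ possibly only in $H^{-1}(\Omega)$, the third derivatives of $u$ have no edge traces, so integrate by parts only once on each $K$ and use $v_h\in H_0^1(\Omega)$ to identify $-\sum_{K\in\mathcal{T}_h}(\nabla\Delta u,\nabla v_h)_K=-(\nabla\Delta u,\nabla v_h)=(f,v_h)$, instead of appealing to the absence of jumps of third derivatives.
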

\begin{proof}
These results follow from \cite[Lemma 5 and Lemma 8]{bs2005}.
\end{proof}
\begin{lemma}\label{lee-bi}
Let $u$ and $u_h$ be the solutions of problems (\ref{model-bi}) and (\ref{IPG}), and let $\Omega_0\subset\Omega_1\subset\Omega$ satisfy
$\rho={\rm dist}(\partial\Omega_0,\partial\Omega_1\backslash\partial\Omega)\geq rh$ for $r>1$ sufficiently large. Then we have
\begin{align}
&\left(\sum_{F\in\mathcal{E}_h}h_F\left\|\left\{\frac{\partial^2(u-u_h)}{\partial{\bm n}_F^2}\right\}\right\|^2_{L^2(F\cap\Omega_0)}
+h_F^{-1}\left\|\left[\frac{\partial(u-u_h)}{\partial{\bm n}_F}\right]\right\|^2_{L^2(F\cap\Omega_0)}\right)^{1/2}\nonumber\\
&+\|D^2(u-u_h)\|_{L^2_h(\Omega_0)}\leq C\rho^{-2}\left(\|u-I_hu\|_{L^2(\Omega_1)}+\|u-u_h\|_{L^2(\Omega_1)}\right)\\
&\quad+ C\|D^2(u-I_hu)\|_{L^2_h(\Omega_1)}+C\Big(\sum_{F\in\mathcal{E}_h}h_F\left\|\left\{\frac{\partial^2(u-I_hu)}{\partial{\bm n}_F^2}\right\}\right\|^2_{L^2(F\cap\Omega_1)}\nonumber\\
&\quad \quad\quad\quad+h_F^{-1}\left\|\left[\frac{\partial(u-I_hu)}{\partial{\bm n}_F}\right]\right\|^2_{L^2(F\cap\Omega_1)}\Big)^{1/2}.\nonumber
\end{align}
\end{lemma}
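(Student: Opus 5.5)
This is the standard local energy estimate for the $C^0$-IP method. I would prove it by the usual cutoff argument of Schatz--Wahlbin type, adapted to the fourth-order setting as in \cite{l2021}.

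\textbf{Setup.} Introduce an intermediate domain $\Omega_0\subset\Omega_{1/2}\subset\Omega_1$ at distance $\rho/2$ from each, and a smooth cutoff $\omega\equiv1$ on $\Omega_{1/2}$, supported in $\Omega_1$, with $|\omega|_{W^{l,\infty}}\le C\rho^{-l}$ for $l\le 3$. Write
\[
e=u-u_h=(u-I_hu)+(I_hu-u_h)=\eta+\xi_h .
\]
The $\eta$ contributions are exactly the interpolation terms on the right-hand side. So it suffices to bound the discrete energy seminorm $|||\xi_h|||_{2,\Omega_0}$, or alternatively to estimate $|||\omega e|||$ directly.

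\textbf{Step 1: local energy identity.} Test Galerkin orthogonality $A_h(e,v_h)=0$ (Lemma~\ref{dfp-bi}) with $v_h=I_h(\omega^2\xi_h)$. Write
\[
A_h(\xi_h,\omega^2\xi_h)=-A_h(\eta,\omega^2\xi_h)+A_h(e,\omega^2\xi_h-I_h(\omega^2\xi_h)).
\]
Expand $A_h(\xi_h,\omega^2\xi_h)$ and commute $\omega$ through $D^2$, the averages and the jumps. This gives
\[
A_h(\omega\xi_h,\omega\xi_h)-(\text{commutator terms}),
\]
where the commutators involve $\nabla\omega\,\nabla\xi_h$, $D^2\omega\,\xi_h$ and face terms weighted by $\nabla\omega$. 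The quantity $A_h(\omega\xi_h,\omega\xi_h)$ is bounded below by $c|||\omega\xi_h|||_2^2$ by coercivity for $\sigma$ large, since $\omega\xi_h$ is piecewise smooth with $[\omega\xi_h]=0$.

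The commutators are bounded by
\[
\rho^{-1}\|\nabla\xi_h\|_{L^2(\Omega_1)}|||\omega\xi_h|||+\rho^{-2}\|\xi_h\|_{L^2(\Omega_1)}|||\omega\xi_h|||,
\]
plus higher-order terms.

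\textbf{Step 2: superapproximation.} This is the main obstacle. I need
\[
|||\omega^2\xi_h-I_h(\omega^2\xi_h)|||_{2,K}\le C\big(h\rho^{-1}|||\xi_h|||_{2,K}+\rho^{-2}\|\nabla\xi_h\|_{L^2(K)}+\rho^{-3}\|\xi_h\|_{L^2(K)}\big)
\]
(up to $h$ powers). This follows from the Bramble--Hilbert lemma, because $D^{k+1}(\omega^2\xi_h)$ only involves derivatives of $\omega$ times lower derivatives of $\xi_h$.

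Combining with continuity of $A_h$ in the mesh-dependent norm and Young's inequality, and using $h\le\rho/r$ with $r$ large to absorb the $h\rho^{-1}$ terms, gives
\[
|||\xi_h|||_{2,\Omega_{1/2}}\le C|||\eta|||_{2,\Omega_1}+C\rho^{-1}\|\nabla\xi_h\|_{L^2(\Omega_1)}+C\rho^{-2}\|\xi_h\|_{L^2(\Omega_1)} .
\]

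\textbf{Step 3: remove the gradient term.} Eliminate $\rho^{-1}\|\nabla\xi_h\|$ by the interpolation inequality
\[
\|\nabla v\|^2_{L^2(D)}\le\|v\|_{L^2(D)}\,\|D^2v\|_{L^2(D)}.
\]
This holds for the $C^0$ function $\xi_h$ with $D^2$ taken elementwise plus normal-derivative jump face terms, applied on a slightly larger cutoff region. Then kick back, iterating the argument on nested domains $\Omega_0\subset\Omega_{1/2}\subset\Omega_1$ so that the absorbed piece lives on a smaller set.

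Finally, $\|\xi_h\|\le\|u-I_hu\|+\|u-u_h\|$ yields the $\rho^{-2}$ terms, and the triangle inequality finishes the proof. The boundary case, where $\Omega_1$ meets $\partial\Omega$, needs no separate treatment: $\omega$ need not vanish there, because $\xi_h\in H^1_0$ and the boundary jump terms are part of $A_h$.
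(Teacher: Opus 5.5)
The paper gives no argument here; it simply cites Lemma~3.9 of Leykekhman's $C^0$-IP paper. Your cutoff/superapproximation scheme is the standard route behind that citation, and several ingredients are sound:
\begin{itemize}
\item testing with $I_h(\omega^2\xi_h)\in S_h$;
\item superapproximation via $D^{k+1}\xi_h|_K=0$;
\item the boundary treatment. Since $\xi_h=0$ on $\partial\Omega$, you have $\big[\partial_{{\bm n}_F}(\omega^2\xi_h)\big]=\omega^2\big[\partial_{{\bm n}_F}\xi_h\big]$ on boundary edges too, and the boundary terms in your interpolation inequality vanish.
\end{itemize}

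\textbf{The gap is in every absorption step.} Each term you want to kick back is a norm over a \emph{larger} set than the left-hand side, and a small coefficient alone does not let you absorb it.

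\emph{Step~2.} Your superapproximation bound has the unweighted leading term $h\rho^{-1}|||\xi_h|||_{2,K}$. Multiplied by $|||e|||_{2,\Omega_1}$, it produces $(h/\rho)|||\xi_h|||_{2,\Omega_1}^2$. This cannot be absorbed into $|||\omega\xi_h|||_2^2$. The inverse estimate $|||\xi_h|||_{2,\Omega_1}\le Ch^{-1}\|\nabla\xi_h\|_{L^2}$ does not rescue it either: it only gives $(h\rho)^{-1}\|\nabla\xi_h\|^2$, which is off by the factor $\rho/h$.

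The fix is to keep the weight. The $j=1$ term of $D^{k+1}(\omega^2\xi_h)$ carries $D(\omega^2)=2\omega\nabla\omega$. After freezing $\omega$ on $K$, this gives $C(h/\rho)|||\omega\xi_h|||_{2,K}+C(h/\rho)^2|||\xi_h|||_{2,K}+\dots$. Young's inequality then leaves $\epsilon|||\omega\xi_h|||_2^2+C(h/\rho)^2|||\xi_h|||_{2,\Omega_1}^2$, and only now does the inverse estimate yield $C\rho^{-2}\|\nabla\xi_h\|^2$. The same freezing is needed for your coercivity claim, because the inverse trace inequality behind coercivity fails for the non-polynomial $\omega\xi_h$.

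\emph{Step~3 is the real problem.} The interpolation inequality gives
\[
\rho^{-1}\|\nabla\xi_h\|_{L^2(\Omega_{1/2})}\le\epsilon|||\xi_h|||_{2,\Omega_1}+C_\epsilon\rho^{-2}\|\xi_h\|_{L^2(\Omega_1)},
\]
so the $\epsilon$-term sits on the \emph{outer} domain. With any fixed finite chain $\Omega_0\subset\Omega_{1/2}\subset\Omega_1$, the outermost energy norm is left over. The only local bound for it is the inverse estimate $|||\xi_h|||_{2,\Omega_1}\le Ch^{-2}\|\xi_h\|_{L^2}$, which loses $(\rho/h)^2$. The global C\'ea bound is not admissible, since the statement involves only $\Omega_1$. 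Your phrase that the absorbed piece ``lives on a smaller set'' has it backwards.

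What is missing is a genuine iteration lemma. Take a family $\Omega_t$, $t\in[0,1]$, with $\mathrm{dist}(\Omega_s,\partial\Omega_t\setminus\partial\Omega)=(t-s)\rho$, and set $f(t)=|||\xi_h|||_{2,\Omega_t}$. Apply Step~2 on $(\Omega_s,\Omega_{(s+t)/2})$ and the interpolation inequality on $(\Omega_{(s+t)/2},\Omega_t)$ to get
\[
f(s)\le\epsilon f(t)+C|||u-I_hu|||_{2,\Omega_1}+C_\epsilon\big((t-s)\rho\big)^{-2}\|\xi_h\|_{L^2(\Omega_1)}\quad\text{whenever }(t-s)\rho\ge rh .
\]
Iterate with gaps $t_{i+1}-t_i=(1-\lambda)\lambda^i$ and $\epsilon<\lambda^2$, so the resulting geometric series converges.

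The hypothesis $(t-s)\rho\ge rh$ fails after $N\approx\log(\rho/(rh))/\log(1/\lambda)$ steps, so you must stop there. Bound the tail by $\epsilon^N f(t_N)\le C\epsilon^N h^{-2}\|\xi_h\|_{L^2(\Omega_1)}$. This is $\le C\rho^{-2}\|\xi_h\|_{L^2(\Omega_1)}$ precisely because $\epsilon\le\lambda^2$. This termination at the mesh scale is where $\rho\ge rh$ is really used, and it is absent from your sketch.

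Alternatively, you could run a weighted argument with cutoffs $\psi^m$, so that every gradient term carries a power of the same cutoff as the left-hand side. Either way, an additional mechanism is required to close the proof.
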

\begin{proof}
This result can be found in \cite[Lemma 3.9]{l2021}.
\end{proof}

\subsection{Proof of Theorem \ref{mn-bi-jump}}\label{sec4:3}
For $z\in\Omega$, in section \ref{sec4:3} we let $G(z,x)$ be the Green's function associated with (\ref{model-bi}), which is defined as
\begin{align*}
\Delta^2G(z,x)=\delta_z\quad {\rm in}~\Omega,\quad G(z,x)=\nabla G(z,x)\cdot{\bm n}=0\quad {\rm on}~\partial\Omega.
\end{align*}
From \cite[Lemma 2.1]{l2021}, we know that the Green's function $G(z,x)$ satisfies the following pointwise estimates:
\begin{align}
|D_z^{\alpha}D_x^{\beta}G(z,x)|\leq C|x-z|^{2-|\alpha|-|\beta|},\quad \forall 1\leq |\alpha|,|\beta|\leq 2,~3\leq |\alpha|+|\beta|\leq 4,\label{pe-G-bi}
\end{align}
for $x\neq z$.

Similar to Section \ref{sec3:3}, for each point $x_{\ast}\in F$ with $F\in\mathcal{E}_h^o$ shared by the elements $K^+_{\ast}$ and $K^-_{\ast}$,
we define the following problem
\begin{align}
\Delta^2g_{{\rm if}}(x_{\ast},x)=\nabla\cdot({\bm e}_i\delta_{h,K^+_{\ast}})-\nabla\cdot({\bm e}_i\delta_{h,K^-_{\ast}})\quad {\rm in}~\Omega,\quad
g_{{\rm if}}=\nabla g_{{\rm if}}\cdot{\bm n}=0\quad {\rm on}~\partial\Omega,\label{rG-bi}
\end{align}
and let $g_{{\rm if},h}\in S_h$ be its $C^0$-IP approximation such that
\begin{align}
A_h(g_{{\rm if},h},v_h)=(\nabla\cdot({\bm e}_i\delta_{h,K^+_{\ast}})-\nabla\cdot({\bm e}_i\delta_{h,K^-_{\ast}}),v_h)_{\mathcal{T}_h},\quad \forall v_h\in S_h.\label{rG-bi-IPG}
\end{align}
Then, using (\ref{ddd:2}), Lemma \ref{dfp-bi}, (\ref{rG-bi}) and (\ref{rG-bi-IPG}), we obtain
\begin{align}
&(\nabla (I_hu-u_h)\cdot{\bm e}_i)|_{K^-_{\ast}}(x_{\ast})-(\nabla (I_hu-u_h)\cdot{\bm e}_i)|_{K^+_{\ast}}(x_{\ast})\nonumber\\
=&(I_hu-u_h,\nabla\cdot({\bm e}_i\delta_{h,K^+_{\ast}})-\nabla\cdot({\bm e}_i\delta_{h,K^-_{\ast}}))_{\mathcal{T}_h}\nonumber\\
=&A_h(g_{{\rm if},h}-g_{{\rm if}},I_hu-u)+A_h(g_{{\rm if}},I_hu-u)\nonumber\\
\leq&\Big(\sum_{F\in\mathcal{E}_h}\Big(h_F\left\|\left\{\frac{\partial^2(g_{{\rm if},h}-g_{{\rm if}})}{\partial{\bm n}_F^2}\right\}\right\|_{L^1(F)}+\left\|\left[\frac{\partial(g_{{\rm if},h}-g_{{\rm if}})}{\partial{\bm n}_F}\right]\right\|_{L^1(F)}\Big)\label{rG-bi:1}\\
&+\|D^2(g_{{\rm if},h}-g_{{\rm if}})\|_{L_h^1(\Omega)}\Big)\Big(\|D^2(I_hu-u)\|_{L_h^{\infty}(\Omega)}+h^{-1}\|\nabla(I_hu-u)\|_{L^{\infty}(\Omega)}\Big)\nonumber\\
&+(\nabla\cdot({\bm e}_i\delta_{h,K^+_{\ast}})-\nabla\cdot({\bm e}_i\delta_{h,K^-_{\ast}}),I_hu-u)\nonumber\\
\leq&\Big(Ch+\sum_{F\in\mathcal{E}_h}\Big(h_F\left\|\left\{\frac{\partial^2(g_{{\rm if},h}-g_{{\rm if}})}{\partial{\bm n}_F^2}\right\}\right\|_{L^1(F)}+\left\|\left[\frac{\partial(g_{{\rm if},h}-g_{{\rm if}})}{\partial{\bm n}_F}\right]\right\|_{L^1(F)}\Big)\nonumber\\
&+\|D^2(g_{{\rm if},h}-g_{{\rm if}})\|_{L_h^1(\Omega)}\Big)\Big(\|D^2(I_hu-u)\|_{L_h^{\infty}(\Omega)}+h^{-1}\|\nabla(I_hu-u)\|_{L^{\infty}(\Omega)}\Big).\nonumber
\end{align}

Next, for each point $x^{\ast}\in F$ with $F\in \mathcal{E}_h^{\partial}$ included in the element $K^{\ast}$, we define the following problem
\begin{align}
\Delta^2g_{{\rm bf}}(x^{\ast},x)=\nabla\cdot({\bm e}_i\delta_{h,x^{\ast}})\quad {\rm in}~\Omega,\quad g_{{\rm bf}}=\nabla g_{{\rm bf}}\cdot{\bm n}=0\quad {\rm on}~\partial\Omega,\label{rG1-bi}
\end{align}
and let $g_{{\rm bf},h}\in S_h$ be its $C^0$-IP approximation such that
\begin{align}
A_h(g_{{\rm bf},h},v_h)=(\nabla\cdot({\bm e}_i\delta_{h,x^{\ast}}),v_h),\quad \forall v_h\in S_h.\label{rG1-bi-IPG}
\end{align}
Then, similar to (\ref{rG-bi:1}), we have by using (\ref{ddd:2}), Lemma \ref{dfp-bi}, (\ref{rG1-bi}) and (\ref{rG1-bi-IPG})
\begin{align}
&-(\nabla (I_hu-u_h)\cdot{\bm e}_i)|_{K^{\ast}}(x^{\ast})=(I_hu-u_h,\nabla\cdot({\bm e}_i\delta_{h,x^{\ast}}))\nonumber\\
=&A_h(g_{{\rm bf},h}-g_{{\rm bf}},I_hu-u)+A_h(g_{{\rm bf}},I_hu-u)\label{rG1-bi:1}\\
\leq&\Big(Ch+\sum_{F\in\mathcal{E}_h}\Big(h_F\left\|\left\{\frac{\partial^2(g_{{\rm bf},h}-g_{{\rm bf}})}{\partial{\bm n}_F^2}\right\}\right\|_{L^1(F)}+\left\|\left[\frac{\partial(g_{{\rm bf},h}-g_{{\rm bf}})}{\partial{\bm n}_F}\right]\right\|_{L^1(F)}\Big)\nonumber\\
&+\|D^2(g_{{\rm bf},h}-g_{{\rm bf}})\|_{L_h^1(\Omega)}\Big)\Big(\|D^2(I_hu-u)\|_{L_h^{\infty}(\Omega)}+h^{-1}\|\nabla(I_hu-u)\|_{L^{\infty}(\Omega)}\Big).\nonumber
\end{align}

Before we provide a further estimate for (\ref{rG-bi:1}) and (\ref{rG1-bi:1}), we prove the following local energy estimates for $g_{{\rm if}}$ and $g_{{\rm bf}}$.
\begin{lemma}\label{rG-bi-lee}
Let $g_{{\rm if}}$ and $g_{{\rm bf}}$ be the solutions of problems (\ref{rG-bi}) and (\ref{rG1-bi}), then we have
\begin{align}
|D^2g_{{\rm if}}|_{L^{\infty}(\Omega_j^2(x_{\ast}))}+|D^2g_{{\rm bf}}|_{L^{\infty}(\Omega_j^2(x^{\ast}))}\leq Chd_j^{-2},
\end{align}
and
\begin{align}
|D^2g_{{\rm if}}|_{L^2(\Omega_j^2(x_{\ast}))}+|D^2g_{{\rm bf}}|_{L^2(\Omega_j^2(x^{\ast}))}\leq Chd_j^{-1}.
\end{align}
\end{lemma}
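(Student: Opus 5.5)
I would follow the proof of Lemma \ref{ee-rG23} line by line, now with the biharmonic Green's function $G$ of this section and its pointwise bounds (\ref{pe-G-bi}). For $x\in\Omega_j^2(x_{\ast})$ (away from the support of the sources once $M$ is large), the definition of $G$ and integration by parts against the divergence-form sources give
\begin{align*}
g_{{\rm if}}(x_{\ast},x)=-(\delta_{h,K_{\ast}^+}(y),\partial_{y_i}G(x,y))_{K_{\ast}^+}+(\delta_{h,K_{\ast}^-}(y),\partial_{y_i}G(x,y))_{K_{\ast}^-},
\end{align*}
and the same identity holds after applying $D_x^2$ to both sides, since $x$ stays a distance $\gtrsim d_j$ from $K_{\ast}^\pm$. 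For $g_{{\rm bf}}$ only the single term with $\delta_{h,x^{\ast}}$ on $K^{\ast}$ appears.

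\textbf{Cancellation step.} For $y\in K_{\ast}^\pm$ I take the curves $\boldsymbol{\eta}_y^\pm$ of Lemma \ref{curve-assumption}, joining $x_{\ast}$ to $y$ inside $K_{\ast}^\pm$, and write
\begin{align*}
D_x^2\partial_{y_i}G(x,y)=D_x^2\partial_{y_i}G(x,x_{\ast})+\int_0^1 D_x^2\nabla_y\partial_{y_i}G(x,\boldsymbol{\eta}_y^\pm(s))\cdot\frac{d\boldsymbol{\eta}_y^\pm(s)}{ds}ds.
\end{align*}
Because $\int\delta_{h,K_{\ast}^\pm}=1$ by (\ref{ddd:1}), the constant terms cancel between the $+$ and $-$ contributions. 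In the boundary case I would instead use that $G(x,\cdot)$ and $\nabla_yG(x,\cdot)$ vanish on $\partial\Omega$. Then $\partial_{y_i}G(x,x^{\ast})=0$, so the leading term again drops out; the same holds for its $D_x^2$, and this is where the clamped boundary condition is essential.

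\textbf{Bounding the remainder.} What remains is bounded using $|d\boldsymbol{\eta}/ds|\le Ch$, $\|\delta_{h}\|_{L^1}\le C$, $|x-\boldsymbol{\eta}_y(s)|\ge Cd_j$, and (\ref{pe-G-bi}) with $|\alpha|=2$ (derivatives in $y$) and $|\beta|=2$ (derivatives in $x$). This gives $|D^2_xD^2_yG|\le C|x-y|^{-2}$, hence $\|D^2g_{{\rm if}}\|_{L^\infty(\Omega_j^2)}\le Chd_j^{-2}$, and likewise for $g_{{\rm bf}}$. The $L^2$ bound then follows by multiplying by $|\Omega_j^2|^{1/2}\le Cd_j$, giving $Chd_j^{-1}$ in two dimensions.

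\textbf{Main obstacle.} The delicate point is the boundary case. I must justify that $D_x^2\partial_{y_i}G(x,x^{\ast})$ vanishes for $x^{\ast}$ on a boundary edge, including near corners. I would argue it from symmetry of $G$ and the trace conditions $G(x,\cdot)=\partial_{\bm n}G(x,\cdot)=0$, which together force the full gradient $\nabla_yG(x,\cdot)$ to vanish on $\partial\Omega$; the differentiability of $G$ up to the boundary away from $x$ is implicit in (\ref{pe-G-bi}). A second check is that (\ref{pe-G-bi}) is valid up to $\partial\Omega$ with $1\le|\alpha|,|\beta|\le2$. This is exactly the range needed here, so the Taylor step along $\boldsymbol{\eta}_y$ uses only admissible derivatives.
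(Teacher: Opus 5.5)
Your proposal is correct and follows essentially the same route as the paper. Both use the Green's function representation that reduces $D^2 g_{\rm if}(x_*,x)$ to $\pm(\delta_{h,K_*^\pm}, D_x^2\nabla_y G(x,\cdot))_{K_*^\pm}$, then a first-order expansion in $y$ about $x_*$ whose leading term cancels because $\int\delta_{h,K_*^\pm}=1$ (or vanishes because $\nabla_y G(x,x^*)=0$ in the boundary case). They then apply the bound $|D_x^2D_y^2G|\le C|x-y|^{-2}$ from (\ref{pe-G-bi}) and use $|\Omega_j^2|^{1/2}\le Cd_j$ for the $L^2$ estimate. The differences are cosmetic: the paper uses a straight-segment mean-value expansion from $y$ to $x_*$ rather than the curves of Lemma \ref{curve-assumption}, and symmetry of $G$ is unnecessary because $G(x,\cdot)$ already satisfies the clamped conditions in its second argument.
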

\begin{proof}
For any $x\in \Omega_j^2(x_{\ast})$, using the definitions of the Green's function $G$ and the Dirac delta function $\delta_x$ we have
\begin{align}
g_{{\rm if}}(x_{\ast},x)=&(\delta_x(y),g_{{\rm if}}(x_{\ast},y))=(D^2G(x,y),D^2g_{{\rm if}}(x_{\ast},y))\nonumber\\
=&(\nabla\cdot({\bm e}_i\delta_{h,K^+_{\ast}})-\nabla\cdot({\bm e}_i\delta_{h,K^-_{\ast}}),G(x,y))_{\mathcal{T}_h}\nonumber\\
=&({\bm e}_i\delta_{h,K^-_{\ast}},\nabla G(x,y))_{K^-_{\ast}}-({\bm e}_i\delta_{h,K^+_{\ast}},\nabla G(x,y))_{K^+_{\ast}}.\nonumber
\end{align}
Therefore,
\begin{align}
\partial_{x_ix_j} g_{{\rm if}}(x_{\ast},x)=&\int_{K^-_{\ast}}({\bm e}_i\delta_{h,K^-_{\ast}})(y)\cdot(\partial_{x_ix_j}\nabla_y G(x,y)) dy\label{rG-bi-lee-proof:1}\\
&-\int_{K^+_{\ast}}({\bm e}_i\delta_{h,K^+_{\ast}})(y)\cdot(\partial_{x_ix_j}\nabla_y G(x,y)) dy.\nonumber
\end{align}
Since $\partial_{x_ix_j}\nabla_y G(x,y)$, as a function of the second variable $y$, has the first order derivative in $B(x_{\ast},2h)\cap\Omega$ (see (\ref{pe-G-bi})), using Taylor's expansion
we know that there are two constants $t_3,t_4\in (0,1)$ depending on $y$ such that
\begin{align*}
\partial_{x_ix_j}\nabla_y G(x,y)=\partial_{x_ix_j}\nabla_y G(x,x_{\ast})+\partial_{x_ix_j}D^2_y G(x,y+t_3(x_{\ast}-y))(y-x_{\ast}),\quad \forall y\in K_{\ast}^-,\\
\partial_{x_ix_j}\nabla_y G(x,y)=\partial_{x_ix_j}\nabla_y G(x,x_{\ast})+\partial_{x_ix_j}D^2_y G(x,y+t_4(x_{\ast}-y))(y-x_{\ast}),\quad \forall y\in K_{\ast}^+,
\end{align*}
where $y+t_3(x_{\ast}-y)$ for $y\in K_{\ast}^+$ and $y+t_4(x_{\ast}-y)$ for $y\in K_{\ast}^-$ belong to $B(x_{\ast},2h)\cap\Omega$ since the domain $\Omega$ is convex. Moreover, according to (\ref{ddd:1}) we have
\begin{align*}
({\bm e}_i\delta_{h,K_{\ast}^+}(y),\partial_{x_ix_j}\nabla_y G(x,x_{\ast}))_{K_{\ast}^+}=({\bm e}_i\delta_{h,K_{\ast}^-}(y),\partial_{x_ix_j}\nabla_y G(x,x_{\ast}))_{K_{\ast}^-}.
\end{align*}
 Then, through (\ref{pe-G-bi}), (\ref{rG-bi-lee-proof:1}) and the above three identities, we obtain
\begin{align*}
\left|\partial_{x_ix_j} g_{{\rm if}}(x_{\ast},x)\right|\leq&\int_{K^-_{\ast}}\left|\delta_{h,K^-_{\ast}}\right|\left|\partial_{x_ix_j}D^2_y G(x,y+t_3(x_{\ast}-y))\right|\left|y-x_{\ast}\right| dy\\
&+\int_{K^+_{\ast}}\left|\delta_{h,K^+_{\ast}}\right|\left|\partial_{x_ix_j}D^2_y G(x,y+t_4(x_{\ast}-y))\right|\left|y-x_{\ast}\right| dy\\
\leq&C\int_{K^-_{\ast}}h\left|\delta_{h,K^-_{\ast}}\right|\left|t_3(x_{\ast}-x)+(1-t_3)(y-x)\right|^{-2} dy\\
&+C\int_{K^+_{\ast}}h\left|\delta_{h,K^+_{\ast}}\right|\left|t_4(x_{\ast}-x)+(1-t_4)(y-x)\right|^{-2} dy\leq Chd_j^{-2},
\end{align*}
and
\begin{align*}
|D^2 g_{{\rm if}}|_{L^2(\Omega_j^2(x_{\ast}))}\leq Chd_j^{-1}.
\end{align*}

For any $x\in\Omega_j^2(x^{\ast})$, similar to (\ref{rG-bi-lee-proof:1}), we have
\begin{align}
\partial_{x_ix_j}g_{{\rm bf}}(x^{\ast},x)=-\int_{K^{\ast}}({\bm e}_i\delta_{h,x^{\ast}})(y)\cdot(\partial_{x_ix_j}\nabla_yG(x,y)) dy.\label{rG-bi-lee-proof:2}
\end{align}
Since $\partial_{x_ix_j}\nabla_yG(x,y)$, as a function of the second variable $y$, has the first order derivative in $B(x^{\ast},2h)\cap\Omega$ (see (\ref{pe-G-bi})),
Taylor's expansion, together with $\partial_{x_ix_j}\nabla_yG(x,x^{\ast})=0$ for any $x\in\Omega_j^2(x^{\ast})$, implies that there exists a constant
$m_2\in(0,1)$ depending on $y$ such that
\begin{align*}
\partial_{x_ix_j}\nabla_yG(x,y)=\partial_{x_ix_j}D_y^2G(x,y+m_2(x^{\ast}-y))(y-x^{\ast}),\quad \forall y\in K^{\ast},
\end{align*}
where $y+m_2(x^{\ast}-y)\in B(x^{\ast},2h)\cap\Omega$ since the domain $\Omega$ is convex.
Thus, (\ref{pe-G-bi}), (\ref{rG-bi-lee-proof:2}) and the above identity yield
\begin{align*}
\left|\partial_{x_ix_j}g_{{\rm bf}}(x^{\ast},x)\right|\leq&\int_{K^{\ast}}\left|\delta_{h,x^{\ast}}\right|\left|\partial_{x_ix_j}D_y^2G(x,y+m_2(x^{\ast}-y))\right|\left|y-x^{\ast}\right|dy\\
\leq&C\int_{K^{\ast}}h\left|\delta_{h,x^{\ast}}\right|\left|m_2(x-x^{\ast})+(1-m_2)(x-y)\right|^{-2} dy\leq Chd_j^{-2},
\end{align*}
and
\begin{align*}
|D^2 g_{{\rm bf}}|_{L^2(\Omega_j^2(x^{\ast}))}\leq Chd_j^{-1}.
\end{align*}
We conclude the proof.
\end{proof}

Finally, based on Lemma \ref{rG-bi-lee}, we prove the following lemma, which, together with (\ref{rG-bi:1}) and (\ref{rG1-bi:1}), yields the results
presented in Theorem \ref{mn-bi-jump}.
\begin{lemma}\label{rG-bi-L1}
Let $g_{{\rm if}}$ and $g_{{\rm bf}}$ be the solutions of problems (\ref{rG-bi}) and (\ref{rG1-bi}), and let $g_{{\rm if},h}$ and $g_{{\rm bf},h}$ be their
$C^0$-IP approximations defined in (\ref{rG-bi-IPG}) and (\ref{rG1-bi-IPG}). Then, for ${\rm s=if}$ and ${\rm bf}$ we have
\begin{align}
\sum_{F\in\mathcal{E}_h}\Big(h_F&\left\|\left\{\frac{\partial^2(g_{{\rm s},h}-g_{{\rm s}})}{\partial{\bm n}_F^2}\right\}\right\|_{L^1(F)}+\left\|\left[\frac{\partial(g_{{\rm s},h}-g_{{\rm s}})}{\partial{\bm n}_F}\right]\right\|_{L^1(F)}\Big)\nonumber\\
&+\|D^2(g_{{\rm s},h}-g_{{\rm s}})\|_{L_h^1(\Omega)}\leq Ch|\ln h|^{\frac{3}{2}}.
\end{align}
If the largest inner angle of the polygonal domain is less than ${\rm 126.28^\circ}$, we have
\begin{align}
\sum_{F\in\mathcal{E}_h}\Big(h_F&\left\|\left\{\frac{\partial^2(g_{{\rm s},h}-g_{{\rm s}})}{\partial{\bm n}_F^2}\right\}\right\|_{L^1(F)}+\left\|\left[\frac{\partial(g_{{\rm s},h}-g_{{\rm s}})}{\partial{\bm n}_F}\right]\right\|_{L^1(F)}\Big)\nonumber\\
&+\|D^2(g_{{\rm s},h}-g_{{\rm s}})\|_{L_h^1(\Omega)}\leq Ch|\ln h|.
\end{align}
\end{lemma}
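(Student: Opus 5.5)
The plan is to follow the dyadic argument of Lemma \ref{rG23-L1}, adapted to the fourth-order setting, and to combine it with Leykekhman's machinery from \cite{l2021}. The local energy estimate comes from Lemma \ref{lee-bi}, and the gain of one factor of $h$ comes from Lemma \ref{rG-bi-lee}. I treat ${\rm s=if}$ in detail; the case ${\rm s=bf}$ is identical, because Lemma \ref{rG-bi-lee} covers both.

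\textbf{Step 1: reduction to $L^2$ pieces on annuli.} Split $\Omega=\Omega_\ast(x_\ast)\cup\bigcup_j\Omega_j(x_\ast)$ and apply Cauchy--Schwarz on each piece. The left-hand side is then bounded by
$$Ch\,\mathcal{K}_{\Omega_\ast}+C\sum_j d_j\,\mathcal{K}_{\Omega_j},$$
where $\mathcal{K}_D$ denotes the local $C^0$-IP energy norm of $g_{{\rm if}}-g_{{\rm if},h}$ on $D$.

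On $\Omega_\ast$, use the global energy bound of Lemma \ref{dfp-bi}. With interpolation, this reduces to the global $H^2_h$ norm of $g_{{\rm if}}$ near $x_\ast$. Elliptic regularity gives $|g_{{\rm if}}|_{H^3}\le C\|\nabla\cdot({\bm e}_i(\delta_{h,K^+_\ast}-\delta_{h,K^-_\ast}))\|_{H^{-1}}\le Ch^{-1}$, by (\ref{bi-regularity}) and (\ref{ddd:2}) with $d=2$. Hence the $\Omega_\ast$ contribution is $O(h)$.

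On each $\Omega_j$, Lemma \ref{lee-bi} with $\Omega_0=\Omega_j$ and $\Omega_1=\Omega_j^1$ bounds $\mathcal{K}_{\Omega_j}$ by two terms:
\begin{itemize}
\item the local interpolation error, which is at most $Ch|g_{{\rm if}}|_{H^3(\Omega_j^1)}$;
\item the pollution term $Cd_j^{-2}\|g_{{\rm if}}-g_{{\rm if},h}\|_{L^2(\Omega_j^1)}$.
\end{itemize}

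\textbf{Step 2: the local $H^3$ bound.} For the first term I need a local bound $|g_{{\rm if}}|_{H^3(\Omega_j^1)}\le Chd_j^{-2}$. I get it from the same cancellation as in Lemma \ref{rG-bi-lee}, taking one more $x$-derivative and using the bound in (\ref{pe-G-bi}) for $|\alpha|+|\beta|=4$. If that bound is unavailable, I instead apply a cutoff and interior regularity, fed by Lemma \ref{rG-bi-lee}. Either way, the interpolation part contributes
$$\sum_j d_j\, h\cdot hd_j^{-2}\cdot d_j = Ch^2 I\le Ch.$$

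\textbf{Step 3: the pollution term (main obstacle).} I expect the main difficulty here. I will use duality, as in Lemma \ref{rG23-L1}. Take $v$ solving $\Delta^2 v=f$ with $f\in C_0^\infty(\Omega_j^1)$ and $\|f\|_{L^2}=1$. Then
$$(f,g_{{\rm if}}-g_{{\rm if},h})=A_h(v-I_hv,\;g_{{\rm if}}-g_{{\rm if},h}).$$
Split this over $\Omega_j^3$ and its complement.

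\begin{itemize}
\item Near $\Omega_j^3$: the bound is $Ch\,\|v\|_{H^3}\,\mathcal{K}_{\Omega_j^3}\le Chd_j\,\mathcal{K}$.
\item Away from $\Omega_j^3$: I need pointwise bounds on $D^2$ and $D^3$ of $v$ away from the support of $f$. In a convex polygon, the biharmonic Green's function has only limited smoothness near corners. Leykekhman's estimates in \cite{l2021} lose $|\ln h|^{1/2}$ when converting $L^2$ to $L^\infty$ across the dyadic shells, and additional logarithms come from summing over $I\approx|\ln h|$ shells without geometric decay.
\end{itemize}

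Concretely, I expect
$$d_j^{-1}\|g_{{\rm if}}-g_{{\rm if},h}\|_{L^2(\Omega_j^1)}\le Chd_j^{-1}\mathcal{K}+C|\ln h|^{1/2}\bigl(\text{global }L^1\text{ quantity}\bigr)\cdot(\dots).$$
Absorbing the $\mathcal{K}$ terms by choosing $M$ large, and summing over $j$, yields $Ch|\ln h|^{3/2}$.

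\textbf{Step 4: the improved bound.} When the largest angle is below $126.28^\circ$, the improved Green's function regularity of \cite{hltwz2026} gives geometric decay on the far-field duality term. This removes the $|\ln h|^{1/2}$, and the remaining single logarithm comes from the $I$ shells, giving $Ch|\ln h|$.

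The delicate bookkeeping lies in this far-field duality estimate. I would import it verbatim from \cite[Sec.~3]{l2021}, noting that every bound there scales linearly with the source size. Since Lemma \ref{rG-bi-lee} supplies an extra factor $h$ compared with the single delta function used there, the final estimate is $h$ times Leykekhman's bound.
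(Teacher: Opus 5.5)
\textbf{Overall.} Your skeleton matches the paper's. You use the dyadic split, handle $\Omega_\ast$ with the global energy bound and $|g_{\rm if}|_{H^3(\Omega)}\le Ch^{-1}$, and apply Lemma \ref{lee-bi} together with Lemma \ref{rG-bi-lee} on each $\Omega_j$. For a general convex polygon you import $\sum_j d_j^{-1}\|g_{\rm if}-g_{{\rm if},h}\|_{L^2(\Omega_j^1(x_\ast))}\le Ch|\ln h|^{3/2}$ from the duality argument of \cite{l2021}; the paper likewise only cites it.

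\textbf{The gap: the improved-angle estimate.} Your Step 4 says the regularity of \cite{hltwz2026} gives ``geometric decay on the far-field duality term''. But you give no far-field bound for the dual solution and no summation. Your single logarithm is also not traced to any estimate: with your Step 2 the interpolation part is already $O(h)$, so the logarithm would have to come from a pollution bound you never derive.

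The missing idea is global $H^4$ regularity. When the largest angle is below $126.28^\circ$, the clamped biharmonic problem is $H^4$-regular (\cite{br1980}, \cite[(7)]{hltwz2026}). Apply this twice:
\begin{itemize}
\item to the dual solution $W$ of $\Delta^2W=f$, with $f\in\mathcal{C}_0^\infty(\Omega_j^1(x_\ast))$ and $\|f\|_{L^2}=1$, giving $\|W\|_{H^4(\Omega)}\le C$;
\item to $g_{\rm if}$ itself, giving $\|g_{\rm if}\|_{H^4(\Omega)}\le C\|\nabla\cdot({\bm e}_i\delta_{h,K^+_\ast})-\nabla\cdot({\bm e}_i\delta_{h,K^-_\ast})\|_{L^2(\Omega)}\le Ch^{-2}$ by (\ref{ddd:2}).
\end{itemize}
A plain global duality, with no near/far splitting, then gives
\begin{align*}
(f,g_{\rm if}-g_{{\rm if},h})=A_h(W-I_hW,g_{\rm if}-g_{{\rm if},h})\le Ch^4\|W\|_{H^4(\Omega)}\|g_{\rm if}\|_{H^4(\Omega)}\le Ch^2.
\end{align*}
Here each energy-norm interpolation error contributes $h^2$, which uses $k\ge 3$. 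Hence $\sum_j d_j^{-1}\|g_{\rm if}-g_{{\rm if},h}\|_{L^2(\Omega_j^1(x_\ast))}\le Ch^2\sum_j d_j^{-1}\le Ch$. So in the improved case the pollution term carries no logarithm.

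\textbf{Where the logarithm actually comes from.} In the paper the $|\ln h|$ comes from the shell interpolation term. The pointwise part of Lemma \ref{rG-bi-lee} and the $W^{2,\infty}$-stability of $I_h$ give
\begin{align*}
\|D^2(g_{\rm if}-I_hg_{\rm if})\|_{L_h^\infty(\Omega_j^1(x_\ast))}\le C\|D^2g_{\rm if}\|_{L^\infty(\Omega_j^2(x_\ast))}\le Chd_j^{-2}.
\end{align*}
Therefore $d_j\mathcal{F}_{\Omega_j(x_\ast)}\le Ch+Cd_j^{-1}\|g_{\rm if}-g_{{\rm if},h}\|_{L^2(\Omega_j^1(x_\ast))}$, and the $Ch$ terms sum to $Ch|\ln h|$ over $I\approx|\ln h|$ shells.

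\textbf{Step 2.} Your sharper Step 2 is not justified as written. The cancellation bound $|g_{\rm if}|_{H^3(\Omega_j^1(x_\ast))}\le Chd_j^{-2}$ needs $D_x^3D_y^2G$. That lies outside the range $1\le|\alpha|,|\beta|\le 2$, $|\alpha|+|\beta|\le 4$ of (\ref{pe-G-bi}). The cutoff alternative needs bounds on $g_{\rm if}$ and $\nabla g_{\rm if}$ on the annuli, with care where they meet $\partial\Omega$, and Lemma \ref{rG-bi-lee} does not provide these. Step 2 is also unnecessary: the cruder $W^{2,\infty}$ route suffices for both stated bounds.
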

\begin{proof}
Here we only prove the case of ${\rm s=if}$, since the case of ${\rm s=bf}$ can be proved similarly. Using (\ref{ddd:2}) and Lemma \ref{dfp-bi}, we obtain
\begin{align}
&\sum_{F\in\mathcal{E}_h}\Big(h_F\left\|\left\{\frac{\partial^2(g_{{\rm if},h}-g_{{\rm if}})}{\partial{\bm n}_F^2}\right\}\right\|_{L^1(F)}+\left\|\left[\frac{\partial(g_{{\rm if},h}-g_{{\rm if}})}{\partial{\bm n}_F}\right]\right\|_{L^1(F)}\Big)
+\|D^2(g_{{\rm if},h}-g_{{\rm if}})\|_{L_h^1(\Omega)}\nonumber\\
\leq& Ch\mathcal{F}_{\Omega_{\ast}(x_{\ast})}+\sum_{j=0}^Id_j\mathcal{F}_{\Omega_j(x_{\ast})}\leq Ch^2|D^3 g_{{\rm if}}|_{L^2(\Omega)}+\sum_{j=0}^Id_j\mathcal{F}_{\Omega_j(x_{\ast})}\leq Ch+\sum_{j=0}^Id_j\mathcal{F}_{\Omega_j(x_{\ast})},\label{rG-bi-L1-proof:1}
\end{align}
where
\begin{align*}
\mathcal{F}_{\Omega_j(x_{\ast})}=&\left(\sum_{F\in\mathcal{E}_h}\Big(h_F\left\|\left\{\frac{\partial^2(g_{{\rm if},h}-g_{{\rm if}})}{\partial{\bm n}_F^2}\right\}\right\|_{L^2(F\cap\Omega_j(x_{\ast}))}^2+h_F^{-1}\left\|\left[\frac{\partial(g_{{\rm if},h}-g_{{\rm if}})}{\partial{\bm n}_F}\right]\right\|_{L^2(F\cap\Omega_j(x_{\ast}))}^2\Big)\right)^{1/2}\\
&+\|D^2(g_{{\rm if},h}-g_{{\rm if}})\|_{L_h^2(\Omega_j(x_{\ast}))}.
\end{align*}
Applying Lemma \ref{lee-bi}, Lemma \ref{rG-bi-lee}, and interpolation error estimates, we have
\begin{align*}
\mathcal{F}_{\Omega_j(x_{\ast})}^2\leq& C\left(\sum_{F\in\mathcal{E}_h\cap\Omega_j^1(x_{\ast})}h_F^2\right)\|D^2(g_{{\rm if}}-I_hg_{{\rm if}})\|_{L_h^{\infty}(\Omega_j^1(x_{\ast}))}^2\\
&+C(1+h^4d_j^{-4})|D^2 g_{{\rm if}}|_{L^2(\Omega_j^2(x_{\ast}))}^2+Cd_j^{-4}\|g_{{\rm if}}-g_{{\rm if},h}\|^2_{L^2(\Omega_j^1(x_{\ast}))}\\
\leq&Ch^2d_j^{-2}+Cd_j^{-4}\|g_{{\rm if}}-g_{{\rm if},h}\|^2_{L^2(\Omega_j^1(x_{\ast}))},
\end{align*}
and this leads to
\begin{align}
\sum_{j=0}^Id_j\mathcal{F}_{\Omega_j(x_{\ast})}\leq& Ch\sum_{j=0}^I 1+C\sum_{j=0}^Id_j^{-1}\|g_{{\rm if}}-g_{{\rm if},h}\|_{L^2(\Omega_j^1(x_{\ast}))}\label{rG-bi-L1-proof:2}\\
\leq& Ch|\ln h|+C\sum_{j=0}^Id_j^{-1}\|g_{{\rm if}}-g_{{\rm if},h}\|_{L^2(\Omega_j^1(x_{\ast}))}.\nonumber
\end{align}

Furthermore, using the duality argument presented in \cite[Subsection 4.2]{l2021}, we have
\begin{align}
\sum_{j=0}^Id_j^{-1}\|g_{{\rm if}}-g_{{\rm if},h}\|_{L^2(\Omega_j^1(x_{\ast}))}\leq Ch|\ln h|^{\frac{3}{2}}.\label{rG-bi-L1-proof:3}
\end{align}
If the largest inner angle of the polygonal domain is less than $126.28^\circ$, to estimate $\|g_{{\rm if}}-g_{{\rm if},h}\|_{L^2_h(\Omega_j^1(x_{\ast}))}$, we
let $W$ be the solution of the following fourth order problem:
\begin{align*}
\Delta^2 W=f\quad {\rm in}~\Omega,\quad W=\nabla W\cdot{\bm n}=0\quad {\rm on}~\partial\Omega,
\end{align*}
for $f\in\mathcal{C}_0^{\infty}(\Omega_j^1(x_{\ast}))$ with $\|f\|_{L^2(\Omega_j^1(x_{\ast}))}=1$. From \cite[Theorem 2]{br1980} and \cite[(7)]{hltwz2026}, we know that
the above problem has a unique weak solution $W\in H_0^2(\Omega)\cap H^4(\Omega)$ satisfying
\begin{align*}
\|W\|_{H^4(\Omega)}\leq C\|f\|_{L^2(\Omega)}\leq C.
\end{align*}
Using (\ref{ddd:2}), Lemma \ref{dfp-bi}, and interpolation error estimates, we obtain
\begin{align*}
(f,g_{{\rm if}}-g_{{\rm if},h})=&A_h(W,g_{{\rm if}}-g_{{\rm if},h})=A_h(W-I_hW,g_{{\rm if}}-g_{{\rm if},h})\\
\leq&Ch^4\|W\|_{H^4(\Omega)}\|g_{{\rm if}}\|_{H^4(\Omega)}\\
\leq& Ch^4\|W\|_{H^4(\Omega)}\|\nabla\cdot({\bm e}_i\delta_{h,K_{\ast}^+})-\nabla\cdot({\bm e}_i\delta_{h,K^-_{\ast}})\|_{L^2(\Omega)}\leq Ch^2.
\end{align*}
Thus,
\begin{align}
\sum_{j=0}^Id_j^{-1}\|g_{{\rm if}}-g_{{\rm if},h}\|_{L^2(\Omega_j^1(x_{\ast}))}\leq Ch\sum_{j=0}^Ihd_j^{-1}\leq Ch,\label{rG-bi-L1-proof:4}
\end{align}
for the case that the largest inner angle of the polygonal domain is less than $126.28^\circ$.
We conclude the proof by using (\ref{rG-bi-L1-proof:1}), (\ref{rG-bi-L1-proof:2}), (\ref{rG-bi-L1-proof:3}), and (\ref{rG-bi-L1-proof:4}).
\end{proof}

{\bf Acknowledgements}  All authors contributed equally. Haitao Leng was supported by Basic and
Applied Basic Research Foundation of Guangdong Province (Grant No. 2024A1515011163, 2025A1515010428).


\end{document}